\numberwithin{equation}{section}
\newtheorem{theorem}[equation]{Theorem}
\newtheorem{proposition}[equation]{Proposition}
\newtheorem{lemma}[equation]{Lemma}
\newtheorem{corollary}[equation]{Corollary}
\newtheorem{conjecture}[equation]{Conjecture}
\theoremstyle{definition}
\newtheorem{rmk}[equation]{Remark}
\newenvironment{remark}[1][]{\begin{rmk}[#1] \pushQED{\qed}}{\popQED \end{rmk}}
\newtheorem{eg}[equation]{Example}
\newenvironment{example}[1][]{\begin{eg}[#1] \pushQED{\qed}}{\popQED \end{eg}}
\newtheorem{defn}[equation]{Definition}
\newcommand{\bC}{\mathbf{C}}
\newcommand{\bF}{\mathbf{F}}
\newcommand{\bH}{\mathbf{H}}
\newcommand{\rH}{\mathrm{H}}
\newcommand{\cO}{\mathcal{O}}
\newcommand{\bP}{\mathbf{P}}
\newcommand{\bQ}{\mathbf{Q}}
\newcommand{\cS}{\mathcal{S}}
\newcommand{\bZ}{\mathbf{Z}}
\newcommand{\fh}{\mathfrak{h}}
\renewcommand{\phi}{\varphi}
\renewcommand{\emptyset}{\varnothing}
\newcommand{\ol}[1]{\overline{#1}}
\newcommand{\ul}[1]{\underline{#1}}
\newcommand{\arxiv}[1]{\href{http://arxiv.org/abs/#1}{{\tt arXiv:#1}}}
\def\Ddots{\mathinner{\mkern1mu\raise\p@
\vbox{\kern7\p@\hbox{.}}\mkern2mu
\raise4\p@\hbox{.}\mkern2mu\raise7\p@\hbox{.}\mkern1mu}}
\renewcommand{\hom}{\operatorname{Hom}}
\DeclareMathOperator{\rank}{rank}
\DeclareMathOperator{\Sym}{Sym}
\DeclareMathOperator{\depth}{depth}
\DeclareMathOperator{\pdim}{pdim}
\newcommand{\GL}{\mathbf{GL}}
\newcommand{\SL}{\mathbf{SL}}
\DeclareMathOperator{\ch}{char}
\title{Representations of rational Cherednik algebras of $G(m,r,n)$ in positive characteristic}
\author{Sheela Devadas \and Steven V Sam}
\date{October 13, 2013}
\begin{document}

\maketitle

\begin{abstract}
We study lowest-weight irreducible representations of rational Cherednik algebras attached to the complex reflection groups $G(m, r, n)$ in characteristic $p$. Our approach is mostly from the perspective of commutative algebra. By studying the kernel of the contravariant bilinear form on Verma modules, we obtain formulas for Hilbert series of irreducible representations in a number of cases, and present conjectures in other cases. We observe that the form of the Hilbert series of the irreducible representations and the generators of the kernel tend to be determined by the value of $n$ modulo $p$, and are related to special classes of subspace arrangements. Perhaps the most novel (conjectural) discovery from the commutative algebra perspective is that the generators of the kernel can be given the structure of a ``matrix regular sequence'' in some instances, which we prove in some small cases.
\end{abstract}

\setcounter{tocdepth}{1}
\tableofcontents

\section{Introduction} 

In this paper, we undertake a study of the modular representation theory of the Cherednik algebras associated to the complex reflection groups $G(m,r,n)$, which are generalizations of groups of permutation matrices. We refer the reader to \cite{etingofma} for an introduction to these algebras over $\bC$.

Lowest-weight representations of Cherednik algebras have been studied in both characteristic $0$ and positive characteristic. In characteristic $0$, deep tools have been developed and connections between other aspects of representation theory and algebraic geometry have been found. However, the case of positive characteristic has been studied less, because of a lack of general tools. The representation theory of complex reflection groups becomes more complicated in positive characteristic, which makes the representation theory of the associated Cherednik algebras more interesting.

Via a Verma-like construction, the lowest-weight representations can be expressed as quotients of a free module over a polynomial ring. More precisely, one starts with an irreducible representation $\tau$ of the reflection group $G$ and the rank of the free module is $\dim \tau$. Our approach to the study of these representations is mostly from the perspective of commutative algebra. In the case of $\dim \tau = 1$, then the representation has a ring structure. The lowest-weight representations are always finite-dimensional in positive characteristic, so is always supported at the origin and there is no obvious geometry at one's disposal. We prove in some cases, and conjecture in other cases, that this ring can always be presented as a complete intersection inside of the coordinate ring of a subspace arrangement (which will be a union of flats for the corresponding reflection group). 

There are natural surjections $G(m,r,n) \to G(r,r,n)$ so any representation $\tau$ of $G(r,r,n)$ can be considered as a representation of $G(m,r,n)$. We show in \S\ref{sec:degeneration} that the calculation of the lowest-weight module for the Cherednik algebra of $G(m,r,n)$ reduces to the case of $m=r$. This includes the case when $\tau$ is trivial, which was discussed above. For the most part, our study is focused on this case. So the calculation of the lowest-weight representation amounts to finding the generators of a certain ideal $J$. By what we have just said, it suffices to understand the case $G(m,m,n)$. The behavior of the cases $m=1$ ($G(1,1,n) = \Sigma_n$ is the symmetric group) and $m>1$ are different. In some sense, the case $m>1$ is more combinatorial and easier to study. In these cases, we produce an ideal contained in the desired ideal $J$, and we conjecture that it is the full ideal. We can prove that this is correct in certain situations.

We mention some previous related work on Cherednik algebras. In the rank $1$ case, i.e., for the cyclic groups $\bZ/\ell$, the representation theory was studied by Latour \cite{latour}. Balagovi\'c and Chen studied the Cherednik algebras for $\GL_n(\bF_q)$ and $\SL_n(\bF_q)$ in \cite{balagovic, GL2Fp}. The non-modular case for the symmetric group $\Sigma_n$ (where the characteristic does not divide the order of the group) was studied by Bezrukavnikov, Finkelberg, and Ginzburg in the context of algebraic geometry in \cite{bfg}. This case was also studied by Gordon \cite{gordon}. Lian \cite{lian} studied some of the same groups in our paper, but for different parameters than the ones that we use for the Cherednik algebra.  

Now we summarize the contents of the paper. \S\ref{sec:background} consists of background material on reflection groups, Cherednik algebras, and commutative algebra. \S\ref{sec:subspace} contains some information on subspace arrangements which are relevant to our work, as discussed above. In \S\ref{sec:Gm1n}, we extend Gordon's work \cite{gordon} to the wreath products $G(m,1,n) = \Sigma_n \ltimes (\bZ/m)^n$. This is straightforward, but we restate the relevant proofs for completeness. In \S\ref{sec:degeneration} we relate representations of $G(m,r,n)$ and $G(r,r,n)$ as discussed above. For the case $\tau$ trivial, this reduces our study to $G(m,m,n)$, and this is discussed in \S\ref{sec:tau-triv}. In \S\ref{sec:dihedral} we give a complete analysis of the groups $G(m,m,2)$, which are the dihedral groups. In the last section \S\ref{sec:rank3}, we study the groups $G(m,m,3)$ and give some partial results. We see a simple example of a ``matrix regular sequence'' here and this is elaborated on in Remark~\ref{rmk:matrixkoszul}.

\subsection*{Acknowledgements}

This project was made possible by the PRIMES program organized by Pavel Etingof and Slava Gerovitch. We thank Pavel Etingof for suggesting this problem. We thank Martina Balagovi\'c, Pavel Etingof, and Carl Lian for useful discussions. The computer algebra systems SAGE \cite{sage}, GAP \cite{gap}, and Macaulay2 \cite{m2} were very helpful for obtaining our results.

Steven Sam was supported by an NDSEG fellowship and a Miller research fellowship.

\section{Background} \label{sec:background}

\subsection{Reflection groups}

We say that a matrix $s$ is a {\bf reflection} if $s$ has finite order, i.e., $s^N = 1$ for some $N > 0$, and if $\rank(1-s) = 1$. Over a field of characteristic 0, this implies that $s$ is diagonalizable, and $s$ is what is classically known as a complex reflection, or pseudo-reflection. However, we prefer to use just the terminology ``reflection''. Over a field of positive characteristic, we have allowed the possibility that $s$ is unipotent, e.g., in characteristic 2, the matrix $\left( \begin{smallmatrix} 1 & 1 \\ 0 & 1 \end{smallmatrix} \right)$ is a reflection. In this article, all of the reflections that appear in our examples will be diagonalizable. We only allow the possibility of unipotent reflections to be consistent with \cite{balagovic, GL2Fp}.

A {\bf reflection group} is a finite subgroup of $\GL_n(K)$ generated by reflections. We point out that this is not a property of an abstract finite group, but really a property of a group together with a faithful representation by matrices. Over the complex numbers, all finite reflection groups have been classified by Shephard and Todd \cite{shephardtodd}. We follow their notation for the groups, which we now review for the groups of interest in this paper.

The class of reflection groups that we study in this paper are denoted $G(m,r,n)$ where $m,r,n$ are positive integers and $r$ divides $m$. The group $G(m,1,n)$ consists of $n \times n$ matrices such that every row and column contains at most 1 nonzero entry, and such that every nonzero entry is an $m$th root of unity (we implicitly assume that $K$ contains primitive $m$th roots of unity, i.e., the equation $t^m = 1$ has $m$ distinct solutions in $K$; note that this implies that the characteristic of $K$ does not divide $m$). The group $G(m,r,n)$ is the subgroup of $G(m,1,n)$ defined by the property that the product of all nonzero entries is an $(m/r)$th root of unity.

For example, the group $G(1,1,n)$ is isomorphic to the symmetric group on $n$ letters (which we denote by $\Sigma_n$), and the group $G(m,m,2)$ is isomorphic to the dihedral group of a regular $m$-gon.

\subsection{Specht modules and Garnir polynomials}

Representations of $G(m,r,n)$ are constructed from the representations of the symmetric groups of smaller size known as Specht modules. This construction is described in \cite[\S 5]{kerber}. When the characteristic of $K$ is $0$, Specht modules give all irreducible representations, but in positive characteristic Specht modules are generally reducible. Our reference for this background information is \cite{peel}. We omit the abstract construction of Specht modules since only their realization using Garnir polynomials is relevant for this paper.

Specht modules are indexed by partitions $\lambda$ of $n$. For a given partition $\lambda$ of $n$, a {\bf Young tableau} is a filling of the partition with the numbers from $1$ through $n$. An example of a Young tableau for the partition $(4,2,1)$ follows:
\[
\small \tableau[scY]{1&2&5&7 \cr 3&4 \cr 6}
\]
A {\bf standard Young tableau} is a Young tableau in which the entries in the rows and columns are increasing top to bottom and left to right. For example, the tableau above is standard. The {\bf Garnir polynomial} for a Young tableau $T$ of shape $\lambda$ is defined as follows. Let $a_{i,j}$ be the entry in the $i$th row and $j$th column of $T$. Then
\begin{equation*}
f_T(x) = \prod_{1 \leq d \leq \lambda_1} \prod_{r < s} (x_{a_{r,d}} - x_{a_{s,d}})
\end{equation*} 
is the Garnir polynomial for the Young tableau $T$. For example, the Garnir polynomial for the above tableau is $(x_1-x_3)(x_1-x_6)(x_3-x_6)(x_2-x_4)$. 

The linear span of all Garnir polynomials for tableaux associated to a fixed partition $\lambda$ is the Specht module indexed by $\lambda$, and it is denoted by $S_{\lambda}$. The Garnir polynomials indexed by the standard Young tableaux of shape $\lambda$ form a basis for $S_\lambda$. The degree of these polynomials is $n(\lambda) = \sum_i (i-1)\lambda_i$ and in characteristic $0$, it is known that this is the minimal degree occurrence of this representation in the symmetric algebra \cite[\S 3]{stembridge}.

\subsection{Rational Cherednik algebras}

Let $G \subset \GL(\fh)$ be a reflection group where $\fh$ is a vector space over a field $K$. Let $\cS$ be the set of reflections in $G$. For each $s \in \cS$, we pick a vector $\alpha_s \in \fh^*$ that spans the image of $1-s$, and let $\alpha^\vee_s \in \fh$ be defined by the property
\[
(1-s)x = (\alpha^\vee_s, x)\alpha_s.
\]
Pick $\hbar \in K$ and $c_s \in K$ for each $s \in \cS$, where we require that $c_s = c_{s'}$ if $s$ and $s'$ are conjugate. Let $T(\fh \oplus \fh^*)$ be the tensor algebra on $\fh \oplus \fh^*$. The {\bf (rational) Cherednik algebra} $\bH_{\hbar, c}(G,\fh)$ is the quotient of $K[G] \ltimes T(\fh \oplus \fh^*)$ by the relations
\begin{align} \label{eqn:rca-rel}
[x,x'] = 0, \quad [y,y'] = 0, \quad [y,x] = \hbar(y,x) - \sum_{s \in \cS} c_s (y, \alpha_s) (x, \alpha_s^\vee)s
\end{align}
where $x,x' \in \fh^*$ and $y,y' \in \fh$. The algebra $\bH_{\hbar, c}(G, \fh)$ possesses a $\bZ$-grading: we set $\deg(x) = 1$ for $x \in \fh^*$, $\deg(y) = -1$ for $y \in \fh$, and $\deg(g) = 0$ for $g \in K[G]$.

We have the following PBW-type decomposition of the rational Cherednik algebra:
\begin{align} \label{eqn:pbw}
\bH_{\hbar, c}(G, \fh) = \Sym(\fh) \otimes_K K[G] \otimes_K \Sym(\fh^*)
\end{align}
(see \cite[\S 3.2]{etingofma} in the case when the characteristic of $K$ is 0; the proof holds in general though).

We have isomorphisms $\bH_{\hbar, c}(G, \fh) \cong \bH_{\alpha \hbar, \alpha c}(G, \fh)$ when $\alpha \ne 0$, so we may assume that either $\hbar = 0$ or $\hbar = 1$ without loss of generality. When the parameters $c_s$ are algebraically independent over the prime subfield of $K$ (which is either $\bZ/p$ or $\bQ$), we will say that they are {\bf generic}. In this paper, we will be mostly concerned with the case when the parameters are generic and $\hbar = 0$. In this case, we simply write $\bH(G)$ for the Cherednik algebra. We define $\overline{c}_s = c_{s^{-1}}$. 

\subsection{Category $\cO$}

Following \cite[\S 2.7]{balagovic}, we define $\cO$ to be the category of $\bZ$-graded $\bH_{\hbar, c}(G, \fh)$-modules which are finite dimensional as $K$-vector spaces.

One can construct lowest-weight representations of $\bH_{\hbar,c}(G,\fh)$ in the following way. Let $\tau$ be a representation of $G$. We let $\Sym(\fh)$ act as 0 on $\tau$ and construct the \textbf{Verma module}
\[
M_{\hbar, c}(G,\fh,\tau) = \bH_{\hbar,c}(G,\fh) \otimes_{K[G]\ltimes\Sym(\fh)}\tau.
\]
By the PBW decomposition \eqref{eqn:pbw}, we have
\begin{align} \label{eqn:vermapbw}
M_{\hbar, c}(G,\fh,\tau) = \Sym(\fh^*) \otimes_K \tau
\end{align}
as a $K$-vector space. This is $\bZ$-graded, but does not belong to category $\cO$ since it is infinite-dimensional as a $K$-vector space. There is an intermediate construction, the baby Verma module, which is a quotient of $M_{\hbar, c}(G,\fh,\tau)$, and belongs to $\cO$. We refer to \cite[\S 2.6]{balagovic} for a discussion.

\begin{proposition} \label{prop:betaform}
$M_{\hbar, c}(G,\fh,\tau)$ has a unique maximal graded proper submodule $J_{\hbar, c}(G,\fh, \tau)$, which may be realized as the kernel of the contravariant form 
\[
\beta_{c} \colon M_{\hbar, c}(G,\fh,\tau) \otimes_K M_{\hbar, \overline{c}}(G,\fh^{*},\tau^{*})\to K.
\]
The form $\beta_c$ can be characterized by the property that for all $x \in \fh^*$, $y \in \fh$, $f_1 \in M_{\hbar, c}(G,\fh,\tau)$, $f_2 \in M_{\hbar, \overline{c}}(G,\fh^*,\tau^*)$, $v \in \tau$, and $w \in \tau^*$, we have:
\begin{compactenum}[\rm (1)]
\item $\beta_{c}(xf_1,f_2)=\beta_{c}(f_1,xf_2)$,
\item $\beta_{c}(f_1,yf_2)=\beta_{c}(yf_1,f_2)$,
\item $\beta_{c}(v,w)=w(v)$.
\end{compactenum}
\end{proposition}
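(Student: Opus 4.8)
The plan is to realize $\beta_c$ as arising from a homomorphism of $\bH_{\hbar,c}(G,\fh)$-modules into a suitable dual of the second Verma module, so that properties (1)--(3) and the submodule structure of the kernel become automatic, and then to identify the kernel as the unique maximal graded proper submodule by a direct degree argument. Throughout I take $\tau$ irreducible (as in the introduction), which is what forces uniqueness of the maximal proper submodule. I would organize the argument into: (i) an algebra anti-isomorphism relating the two Cherednik algebras; (ii) construction and uniqueness of $\beta_c$ via the universal property of the Verma module; (iii) the identification of $\ker \beta_c$ with the unique maximal graded proper submodule.

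For (i), I would construct an algebra anti-isomorphism
\[
\sigma \colon \bH_{\hbar, c}(G,\fh) \to \bH_{\hbar, \overline{c}}(G,\fh^*)
\]
that is the identity on $\fh$ and on $\fh^*$ (both algebras are generated by $\fh \oplus \fh^*$ together with $K[G]$) and sends $g \mapsto g^{-1}$ for $g \in G$. Checking that $\sigma$ respects the relations \eqref{eqn:rca-rel} is routine except for the commutator $[y,x]$, which is the crux. Applying $\sigma$ turns $[y,x]$ into $[x,y]$ computed in $\bH_{\hbar,\overline{c}}(G,\fh^*)$ and replaces each reflection $s$ by $s^{-1}$; matching the two sides reduces, after reindexing $s \mapsto s^{-1}$ and using $\overline{c}_s = c_{s^{-1}}$, to the identity
\[
(x, \beta_s)(y, \beta_s^\vee) = (y, \alpha_{s^{-1}})(x, \alpha_{s^{-1}}^\vee),
\]
where $\beta_s, \beta_s^\vee$ are the reflection data for $s$ acting on $\fh$. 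This follows from $G$-invariance of the pairing $\fh \times \fh^* \to K$, since $((1-s)y, x) = (y,(1-s^{-1})x)$ expands to exactly these two products. This relation-check is the main obstacle; once it is in hand the rest is formal.

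For (ii), via $\sigma$ the graded dual $P := M_{\hbar,\overline{c}}(G,\fh^*,\tau^*)^\vee$ becomes a graded left $\bH_{\hbar,c}(G,\fh)$-module through $(a \cdot \phi)(f_2) = \phi(\sigma(a) f_2)$ (this is a left action because $\sigma$ is an anti-homomorphism). I would define $\iota \colon \tau \to P$ by letting $\iota(v)$ be the functional supported in degree $0$ with $\iota(v)(w) = w(v)$, and check that $\iota$ is $G$-equivariant and killed by $\fh \subset \bH_{\hbar,c}(G,\fh)$ (the latter because $\fh$ raises degree on $M_{\hbar,\overline{c}}(G,\fh^*,\tau^*)$, so $\sigma(y)=y$ moves everything out of degree $0$). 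The universal property of $M_{\hbar,c}(G,\fh,\tau) = \bH_{\hbar,c}(G,\fh)\otimes_{K[G]\ltimes \Sym(\fh)}\tau$ then produces a unique $\bH_{\hbar,c}(G,\fh)$-module map $\Psi \colon M_{\hbar,c}(G,\fh,\tau)\to P$ extending $\iota$, and I set $\beta_c(f_1,f_2) = \Psi(f_1)(f_2)$. Properties (1)--(3) are immediate from $\Psi$ being a module map with $\sigma$ acting as the identity on $\fh,\fh^*$, and the uniqueness of $\Psi$ gives uniqueness of $\beta_c$. Since $\beta_c$ arises from a module map, its left radical $\ker \beta_c = \ker \Psi$ is automatically a graded submodule, which I define to be $J_{\hbar,c}(G,\fh,\tau)$.

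For (iii), I would first note $J_{\hbar,c}(G,\fh,\tau)$ is proper: property (3) says $\beta_c$ restricts to the perfect pairing $\tau \times \tau^* \to K$ in degree $0$, so $\ker \beta_c$ meets the degree-$0$ space $\tau$ trivially, and since $\tau$ generates the module, $\ker \beta_c \ne M_{\hbar,c}(G,\fh,\tau)$. Finally, let $N$ be any proper graded submodule. Its degree-$0$ part is a $G$-submodule of $\tau$, hence $0$ by irreducibility (otherwise $N$ would contain the generating space and equal the whole module). For homogeneous $f_1 \in N$ of degree $d$ and any $f_2 \in M_{\hbar,\overline{c}}(G,\fh^*,\tau^*)$ of degree $d$, writing $f_2$ as a sum of terms $y_{j_1}\cdots y_{j_d} w$ and applying property (2) repeatedly moves all the $y$'s onto $f_1$; the result $y_{j_1}\cdots y_{j_d} f_1$ lies in $N$ and in degree $0$, hence is $0$, so $\beta_c(f_1,f_2)=0$. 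Thus $N \subseteq \ker \beta_c = J_{\hbar,c}(G,\fh,\tau)$, proving it is the unique maximal graded proper submodule.
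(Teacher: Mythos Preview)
Your argument is correct. The paper does not give its own proof of this proposition; it simply refers to \cite[\S 2.5]{balagovic}. Your construction---building an anti-isomorphism $\sigma$ between the two Cherednik algebras, using it to make the graded dual of the second Verma module into a left module for the first, invoking the universal property of the induced module to produce $\Psi$, and then deducing maximality of $\ker\Psi$ by pushing any proper graded submodule down to degree~$0$ via repeated use of property~(2)---is the standard way this is done and is essentially what appears in the cited reference. Your explicit check of the $[y,x]$ relation via $((1-s)y,x)=(y,(1-s^{-1})x)$ is exactly the point at which $\overline{c}_s=c_{s^{-1}}$ enters, and your remark that irreducibility of $\tau$ is what forces uniqueness of the maximal proper submodule is a detail the paper leaves implicit in the statement.
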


See \cite[\S 2.5]{balagovic}. In particular, the quotient 
\[
L_{\hbar, c}(G,\fh,\tau) = M_{\hbar, c}(G,\fh,\tau) / J_{\hbar, c}(G,\fh,\tau)
\]
is an irreducible $\bZ$-graded representation of $\bH_{\hbar, c}(G, \fh)$. In fact, it is finite-dimensional, since it is a quotient of the baby Verma module, so it belongs to category $\cO$.

\begin{proposition}
Every irreducible object of $\cO$ is isomorphic to $L_{\hbar, c}(G,\fh,\tau)$ for some irreducible representation $\tau$ of $G$.
\end{proposition}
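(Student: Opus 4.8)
The plan is to show that every irreducible object $L$ of $\cO$ arises as a quotient of some Verma module $M_{\hbar,c}(G,\fh,\tau)$, from which Proposition~\ref{prop:betaform} immediately identifies it with $L_{\hbar,c}(G,\fh,\tau)$. The starting point is the $\bZ$-grading on objects of $\cO$: since $L$ is finite-dimensional as a $K$-vector space, its grading is bounded, so $L = \bigoplus_{d} L_d$ with $L_d = 0$ for $d$ sufficiently small. Let $d_0$ be the minimal degree with $L_{d_0} \neq 0$, and set $\tau = L_{d_0}$, the lowest-weight space.

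First I would check that $\tau$ carries the structure of a $K[G]$-module and that $\Sym(\fh)$ annihilates it. The group $G$ sits in degree $0$ of $\bH_{\hbar,c}(G,\fh)$, so it preserves each graded piece $L_d$; in particular $\tau = L_{d_0}$ is a $G$-subrepresentation. The elements $y \in \fh$ have degree $-1$, so they send $L_{d_0}$ into $L_{d_0 - 1} = 0$ by minimality of $d_0$; hence $\Sym(\fh)$ acts as zero on $\tau$, exactly matching the defining condition of the Verma construction. Thus $\tau$ is a module over $K[G] \ltimes \Sym(\fh)$.

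Next I would invoke the universal property of the Verma module. Because $M_{\hbar,c}(G,\fh,\tau) = \bH_{\hbar,c}(G,\fh) \otimes_{K[G]\ltimes\Sym(\fh)} \tau$ is induced from the $K[G]\ltimes\Sym(\fh)$-module $\tau$, the inclusion $\tau \injects L$ of $K[G]\ltimes\Sym(\fh)$-modules extends uniquely to a morphism $\phi \colon M_{\hbar,c}(G,\fh,\tau) \to L$ of $\bH_{\hbar,c}(G,\fh)$-modules. This $\phi$ is graded (it sends the lowest-weight space identically to $\tau \subset L$, and respects the action of the generators which shift degree in a prescribed way). Its image is a nonzero graded submodule of $L$, so by irreducibility $\phi$ is surjective. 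Therefore $L$ is a graded quotient of the Verma module.

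The final step is to pin down which quotient. Since $L$ is irreducible, its kernel $\ker\phi$ is a maximal proper graded submodule of $M_{\hbar,c}(G,\fh,\tau)$; by the uniqueness of the maximal graded proper submodule $J_{\hbar,c}(G,\fh,\tau)$ established in Proposition~\ref{prop:betaform}, we get $\ker\phi = J_{\hbar,c}(G,\fh,\tau)$, whence $L \cong L_{\hbar,c}(G,\fh,\tau)$. I would also reduce to the case $\tau$ irreducible: if $\tau = L_{d_0}$ is a reducible $G$-representation, I would need to argue that irreducibility of $L$ as an $\bH$-module forces $\tau$ to be irreducible as a $G$-module. The cleanest argument is that any $G$-subrepresentation $\tau' \subsetneq \tau$ generates a proper $\bH$-submodule of $L$ (it cannot generate everything, since applying the raising operators $\fh^*$ and the group to $\tau'$ stays within the $\bH$-submodule, which meets $L_{d_0}$ in a $G$-stable subspace containing $\tau'$); by irreducibility this submodule is zero or all of $L$, and matching lowest-weight spaces forces $\tau' = \tau$. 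The main obstacle I anticipate is precisely this last point: verifying carefully that the lowest-weight space of the $\bH$-submodule generated by $\tau'$ is exactly $\tau'$ and no larger, which requires knowing that the degree-$d_0$ part of $\bH \cdot \tau'$ is the $K[G]$-span of $\tau'$ inside $L_{d_0} = \tau$. This follows from the PBW decomposition \eqref{eqn:pbw}, since any monomial raising the degree and then lowering it back to $d_0$ must, after normal-ordering, either annihilate $\tau'$ (if a lowering operator survives) or act through $K[G]$.
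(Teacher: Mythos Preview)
The paper does not supply its own proof of this proposition; it is stated as background, implicitly covered by the reference \cite[\S 2.5]{balagovic} cited just before it. Your argument is the standard lowest-weight-theory proof and is correct.

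One small point of presentation: you invoke the uniqueness of $J_{\hbar,c}(G,\fh,\tau)$ from Proposition~\ref{prop:betaform} before establishing that $\tau$ is irreducible, but that uniqueness really only holds for irreducible $\tau$ (if $\tau = \tau_1 \oplus \tau_2$ then $M(\tau) = M(\tau_1) \oplus M(\tau_2)$ has several maximal proper graded submodules). Logically you should run your last paragraph first: show via the PBW argument that the lowest-weight space $\tau = L_{d_0}$ is an irreducible $G$-module, and only then apply Proposition~\ref{prop:betaform} to conclude $\ker\phi = J_{\hbar,c}(G,\fh,\tau)$. Your PBW argument for this is fine: any element of $\bH_{\hbar,c}(G,\fh)\cdot\tau'$ in degree $d_0$ is, after normal ordering, a sum of terms $x^\alpha g y^\beta \cdot \tau'$ with $|\alpha|=|\beta|$; since $y^\beta$ annihilates $\tau'$ unless $\beta=0$, only the terms with $\alpha=\beta=0$ survive, and these lie in $K[G]\cdot\tau' = \tau'$.
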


When it is clear from context, we will often omit $G$ and $\fh$ from the notation.

Given a submodule $J' \subset M_{\hbar, c}(G, \fh, \tau)$, the following lemma is useful for determining when $J' = J_{\hbar, c}(G, \fh, \tau)$, i.e., when $M_{\hbar, c}(G, \fh, \tau) / J'$ is irreducible. We will use it throughout the paper.

\begin{lemma} \label{lem:socle}
With the notation above, let $J' \subset M_{\hbar, c}(G, \fh, \tau)$ be a graded  $\bH_{\hbar, c}(G, \fh)$-submodule. Consider $N = M_{\hbar, c}(G, \fh, \tau) / J'$ as a $\Sym(\fh^*)$-module and it assume that it is a finite-dimensional representation. Assume that 
\begin{compactenum}[\rm (1)]
\item the socle of $N$ is concentrated in top degree,
\item the socle is irreducible as a representation of $G$,
\item there exists $v$ in the socle of $N$ such that the linear form $\beta_c(v,-)$ is nonzero.
\end{compactenum}
Then $J'=J_{\hbar, c}(G, \fh, \tau)$ and $N$ is an irreducible $\bH_{\hbar, c}(G, \fh)$-module.
\end{lemma}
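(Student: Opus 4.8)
The plan is to prove the two conclusions simultaneously. Since Proposition~\ref{prop:betaform} identifies $J_{\hbar,c}(G,\fh,\tau)$ as the unique maximal graded proper submodule of $M := M_{\hbar,c}(G,\fh,\tau)$, and since $N = M/J'$ is nonzero (its socle is nonzero by hypothesis (3)), the submodule $J'$ is proper, and hence $J' \subseteq J_{\hbar,c}(G,\fh,\tau)$. Thus it suffices to prove the reverse inclusion, i.e. that the image $\overline{J} := J_{\hbar,c}(G,\fh,\tau)/J'$ inside $N$ is zero; this at once yields $J' = J_{\hbar,c}(G,\fh,\tau)$ and the irreducibility of $N$.

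First I would descend the contravariant form. Because $J' \subseteq J_{\hbar,c}(G,\fh,\tau) = \ker\beta_c$ (the left radical), the form factors through the quotient in its first argument, producing $\overline{\beta}_c \colon N \otimes_K M_{\hbar,\overline{c}}(G,\fh^*,\tau^*) \to K$. Its left radical is then exactly $\overline{J}$: for $v \in N$ one has $\overline{\beta}_c(v,-) = 0$ if and only if any lift of $v$ lies in $J_{\hbar,c}(G,\fh,\tau)$, i.e. $v \in \overline{J}$. In particular, hypothesis (3) supplies an element $v$ of the socle with $v \notin \overline{J}$.

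The heart of the argument is then to show $\overline{J} = 0$ by contradiction, using only that the hypotheses control the socle. Suppose $\overline{J} \neq 0$. As an $\bH_{\hbar,c}(G,\fh)$-submodule, $\overline{J}$ is in particular a nonzero $\Sym(\fh^*)$-submodule of the finite-dimensional module $N$; since $N$ is annihilated by a power of the augmentation ideal $\fm = \fh^* \cdot \Sym(\fh^*)$, the standard argument (take the last nonzero power $\fm^k u$ for some $0 \neq u \in \overline{J}$) shows $\overline{J} \cap \operatorname{soc}(N) \neq 0$. Next I would check that $\operatorname{soc}(N)$ is $G$-stable: the relation $gxg^{-1} = g(x)$ in the Cherednik algebra gives $x(gv) = g\big((g^{-1}x)v\big)$ for $x \in \fh^*$ and $g \in G$, so $g$ preserves the annihilator of $\fm$. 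Hence $\overline{J} \cap \operatorname{soc}(N)$ is a nonzero $G$-subrepresentation of $\operatorname{soc}(N)$; by hypothesis (2) the latter is $G$-irreducible, forcing $\operatorname{soc}(N) \subseteq \overline{J}$. This contradicts the existence of $v \in \operatorname{soc}(N) \setminus \overline{J}$ from the previous step, so $\overline{J} = 0$. Hypothesis (1) enters by guaranteeing that this $G$-irreducible socle occupies a single, top, graded component, which is how one verifies (2) and (3) in concrete computations; note that $G$-irreducibility already forces concentration in one degree, since $G$ respects the grading.

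The main obstacle I anticipate is not a single deep step but the careful bookkeeping around the contravariant form: establishing that $J' \subseteq J_{\hbar,c}(G,\fh,\tau)$ so that $\beta_c$ descends to $N$, and that the radical of the descended form is precisely $\overline{J}$. Once this is in place, the representation-theoretic core—that any nonzero submodule must swallow the entire $G$-irreducible socle—is short, resting on the $G$-equivariance of the socle and the elementary fact that a nonzero submodule of a finite-length graded $\Sym(\fh^*)$-module meets the socle.
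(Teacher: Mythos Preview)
Your proposal is correct and follows essentially the same approach as the paper: descend $\beta_c$ to $N$, identify its left kernel with $\overline{J}=J_{\hbar,c}/J'$, observe that any nonzero submodule of the finite-dimensional $\Sym(\fh^*)$-module $N$ meets the socle, and use $G$-equivariance together with hypotheses (2) and (3) to force $\overline{J}\cap\operatorname{soc}(N)=0$. Your write-up is simply more explicit than the paper's (e.g.\ you spell out why the socle is $G$-stable and correctly note that hypothesis (1) is not strictly needed once (2) is assumed).
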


\begin{proof}
Since $N$ is finite-dimensional, any nonzero $\bH_{\hbar, c}(G)$-submodule of $N$ must nontrivially intersect its socle when considered as a $\Sym(\fh^*)$-module. By Proposition~\ref{prop:betaform}, the maximal proper graded submodule of $N$ is the kernel of $\beta_c \colon N \to \hom_K(M_{\hbar, \ol{c}}(G, \fh^*, \tau^*), K)$. Since $\beta_c$ is $G$-equivariant, conditions (2) and (3) imply that the socle does not intersect $\ker \beta_c$ nontrivially. Hence $\ker \beta_c = 0$ and $N$ is irreducible.
\end{proof}

\subsection{Dunkl operators}

The action of $\bH_{\hbar,c}(G,\fh)$ on the Verma module $\bH_{\hbar,c}(G,\fh) \otimes_{K[G] \ltimes \Sym(\fh)}\tau$ is by left multiplication. However, by \eqref{eqn:vermapbw}, we can write the Verma module as $\Sym(\fh^*) \otimes_K \tau$, and we can explicitly describe the action of the Cherednik algebra in this form. The action of $\Sym(\fh^*)$ is by left multiplication and the action of $G$ is via the diagonal action. For $y \in \fh$, define the {\bf Dunkl operator} $D_y$ on the Verma module $M_c(\tau) = \Sym(\fh^*) \otimes \tau$ by
\[
D_y(f \otimes v) = \hbar \partial_y f \otimes v - \sum_{s \in \cS} c_s \frac{ ( y, \alpha_s )}{\alpha_s} (1-s). f \otimes s.v.
\]
Then the map $y \mapsto D_y$ is the desired action of $\Sym(\fh)$ on $\Sym(\fh^*) \otimes_K \tau$. If we have chosen dual bases $x_1, \dots, x_n \in \fh^*$ and $y_1, \dots, y_n \in \fh$ (which we will below), then we write $D_i$ instead of $D_{y_i}$.

\subsection{Free resolutions}

We will make some use of the theory of free resolutions over polynomial rings, so we briefly review this now. For a more thorough treatment, we refer the reader to \cite[Chapters 17--21]{eisenbud}.

Let $A = K[x_1, \dots, x_n]$ be a polynomial ring in $n$ variables, which we treat as a graded ring with $\deg(x_i) = 1$ (we have in mind $A = \Sym(\fh^*)$ for our applications). Given a graded $A$-module $M$, a {\bf free resolution} of $M$ is a complex of free $A$-modules
\[
\bF_\bullet \colon \cdots \to \bF_i \to \bF_{i-1} \to \cdots \to \bF_1 \to \bF_0 \to 0
\]
such that $\rH_0(\bF_\bullet) = M$ and $\rH_i(\bF_\bullet) = 0$ for $i>0$. We say that $\bF_\bullet$ is {\bf minimal} if all of its differentials become identically 0 after doing the substitution $x_1 \mapsto 0, \dots, x_n \mapsto 0$. Minimal free resolutions exist and are unique up to isomorphism of complexes. We will only consider minimal free resolutions in this paper. It will be convenient to assume that the differentials are degree-preserving, and for that, we introduce the notation $A(-d)$ to denote a free module of rank 1 generated in degree $d$, i.e., $A(-d)_e = A_{e-d}$. Hence each $\bF_i$ is a direct sum of various $A(-j)$. The multiplicity of $A(-j)$ in $\bF_i$ is denoted $\beta_{ij} = \beta_{ij}(M)$ and these are the {\bf graded Betti numbers} of $M$. When displaying them, we will follow {\tt Macaulay 2} notation:
\[
\begin{matrix}
\vdots \\
\beta_{00} & \beta_{11} & \beta_{22} & \cdots \\
\beta_{01} & \beta_{12} & \beta_{23} & \cdots \\
\vdots
\end{matrix}
\]

Let $\pdim M$ be the length of the  minimal free resolution of $M$. Then $\pdim M \le n$ for all $M$, and in fact, we have the Auslander--Buchsbaum formula
\begin{align} \label{eqn:AB}
n = \pdim M + \depth M,
\end{align}
where $\depth M$ is the maximum length of a regular sequence on $M$. In particular, if $I \subset A$ is an ideal whose solution set (over an algebraic closure of $K$) has codimension $c$, then $A/I$ is Cohen--Macaulay if and only if $\pdim A/I = c$. In this case, $A/I$ is Gorenstein if and only if $\rank \bF_c = 1$. If this happens, we immediately get that $\rank \bF_i = \rank \bF_{c-i}$ for all $i$ (this equality is also compatible with the grading, but we won't make much use of it). As a weakening of the Gorenstein property, we say that a Cohen--Macaulay algebra $A/I$ is {\bf level} if $\beta_{c,d} \ne 0$ for exactly one value of $d$. This is equivalent to the property that the socle of any Artinian reduction of $A/I$ (i.e., finite-dimensional quotient of $A/I$ by a maximal length regular sequence) is concentrated in top degree.

\section{Subspace arrangements} \label{sec:subspace}

Fix nonnegative integers $i,n$ and pick a subset $S \subseteq \{1, \dots, n\}$ of size $n-i$ and write $S = \{j_1, \dots, j_{n-i}\}$. For $m \ge 1$, we define
\begin{align*}
X_S^{(m)} &= \{(x_1, \dots, x_n) \in K^n \mid x_{j_1}^m = x_{j_2}^m = \cdots = x_{j_{n-i}}^m\},\\
X^{(m)}_i &= \bigcup_{\substack{S \subseteq \{1, \dots, n\}\\ \#S=n-i}} X_S^{(m)}.
\end{align*}
Let $I_S^{(m)}$ be the ideal generated by the equations $x_{j_1}^m - x_{j_2}^m, x_{j_2}^m - x_{j_3}^m, \dots, x_{j_{n-i-1}}^m - x_{j_{n-i}}^m$ and let $I^{(m)}_i = \bigcap_S I_S^{(m)}$. When $m$ is not divisible by the characteristic of $K$, $I_S^{(m)}$ is the radical ideal of polynomials vanishing on $X_S^{(m)}$. We will only be interested in such cases in this paper, but for this section, we ignore this restriction on the characteristic since one can make uniform statements.

When $m=1$, the ideal $I^{(1)}_i$ is generated by a space of Garnir polynomials of shape $\lambda$, where $\lambda$ can be described as follows. First, write $n = q(n-i-1) + r$ where $0 \le r < n-i-1$. Then $\lambda = (n-i-1, \dots, n-i-1, r)$ where $n-i-1$ appears $q$ times \cite[Corollary 2.3]{lili}. For general $m$, we apply the substitution $x_j \mapsto x_j^m$ to transform $I^{(1)}_i$ into $I^{(m)}_i$.

\begin{proposition}[Etingof--Gorsky--Losev] \label{prop:CMkequals}
The affine variety $X_i^{(m)}$ is Cohen--Macaulay in characteristic $0$ (and hence for sufficiently large characteristic) when $2i<n$.
\end{proposition}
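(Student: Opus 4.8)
The plan is to reduce first to the case $m=1$ by a flat base change, and then to prove that $K[x_1,\dots,x_n]/I_i^{(1)}$ is Cohen--Macaulay by an inductive Mayer--Vietoris argument whose building blocks are skeleta of simplices.

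\emph{Reduction to $m=1$.} The morphism $\Spec K[x_1,\dots,x_n]\to\Spec K[z_1,\dots,z_n]$ induced by $z_j\mapsto x_j^m$ is finite and, since $K[x]$ is free over $K[x^m]$ on the monomials $x^a$ with $0\le a_j<m$, faithfully flat; its closed fibre over the origin is $K[x]/(x_1^m,\dots,x_n^m)$, an Artinian complete intersection and in particular Cohen--Macaulay. By flatness, extension of ideals commutes with the finite intersection defining $I_i^{(m)}$, so $I_i^{(m)}=I_i^{(1)}K[x]$ and $K[x]/I_i^{(m)}=K[x]\otimes_{K[z]}(K[z]/I_i^{(1)})$. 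The flat local criterion (a flat local extension is Cohen--Macaulay iff the base and the closed fibre are), applied at the irrelevant maximal ideals, then shows $K[x]/I_i^{(m)}$ is Cohen--Macaulay iff $K[z]/I_i^{(1)}$ is. So I may assume $m=1$.

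Write $k=n-i$, so that the hypothesis $2i<n$ becomes $2k>n$. The components of $X_i^{(1)}$ are the diagonals $X_S^{(1)}=\{x_j\text{ equal for }j\in S\}$ with $|S|=k$, each of codimension $k-1$, so $\dim K[x]/I_i^{(1)}=i+1$; the inequality $2k>n$ forces any two such $S,S'$ to meet, which is the structural simplification I exploit. Now single out $x_n$ and split $I_i^{(1)}=I'\cap I''$ with $I'=\bigcap_{S\ni n}I_S^{(1)}$ and $I''=\bigcap_{S\not\ni n}I_S^{(1)}$ (all $|S|=k$). After the change of variables $u_j=x_j-x_n$, $v=x_n$, the ideal $I'$ becomes the squarefree monomial ideal $\bigcap_{|S_0|=k-1}(u_j:j\in S_0)$ in the $u$'s, i.e.\ the Stanley--Reisner ideal of the $(i-1)$-skeleton of a simplex, tensored with $K[v]$; skeleta of simplices are shellable, so $K[x]/I'$ is Cohen--Macaulay of dimension $i+1$. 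The ideal $I''$ is the $k$-equal arrangement on $x_1,\dots,x_{n-1}$ (with parameters $n-1,i-1$, for which $2(i-1)<n-1$ still holds) tensored with $K[v]$, hence Cohen--Macaulay of dimension $i+1$ by induction on $n$.

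Feeding these into $0\to K[x]/(I'\cap I'')\to K[x]/I'\oplus K[x]/I''\to K[x]/(I'+I'')\to 0$, the depth lemma $\depth K[x]/(I'\cap I'')\ge\min\{\depth(K[x]/I'\oplus K[x]/I''),\ \depth K[x]/(I'+I'')+1\}$ together with \eqref{eqn:AB} reduces everything to showing $\depth K[x]/(I'+I'')\ge i$. Set-theoretically, using $2k>n$ once more, $V(I'+I'')=\{x:\text{at least }k\text{ of }x_1,\dots,x_{n-1}\text{ equal }x_n\}$, which in the $u$-coordinates is the coordinate arrangement where at least $k$ of the $u_j$ vanish, times $K[v]$ --- again a skeleton, hence Cohen--Macaulay of dimension $i$. \textbf{The main obstacle}, and the only place a genuine computation is required, is to prove that $I'+I''$ is already \emph{radical} (equivalently, that the two subarrangements meet scheme-theoretically transversally), so that $K[x]/(I'+I'')$ really is this skeleton ring and not a thickening of smaller depth; this is exactly where the description of $I''$ via Garnir polynomials and the inequality $2k>n$ must be used together. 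Granting it, the depth lemma yields $\depth K[x]/I_i^{(1)}=i+1=\dim$, so $K[x]/I_i^{(1)}$ is Cohen--Macaulay, the base cases $i=0$ (a single diagonal) and small $n$ being immediate. An alternative to the radicality lemma would be to find a term order under which $\mathrm{in}(I_i^{(1)})$ is the Stanley--Reisner ideal of a shellable complex, giving Cohen--Macaulayness directly from $\depth K[x]/I_i^{(1)}\ge\depth K[x]/\mathrm{in}(I_i^{(1)})$; but identifying that complex and proving shellability repackages the same difficulty.
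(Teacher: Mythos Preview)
Your reduction to $m=1$ is correct and is exactly what the paper does (via Remark~\ref{rmk:reducem=1}).  For $m=1$ the paper does not give an argument at all: it simply cites \cite[Proposition~3.9]{EGL}, whose proof is representation-theoretic (it realises the coordinate ring as a module in category~$\cO$ for a rational Cherednik algebra).  So your Mayer--Vietoris approach is genuinely different from anything in either paper.

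However, the step you flag as ``the main obstacle'' is not just unproved --- it is false.  Already for $n=3$, $i=1$ (so $k=2$) one has
\[
I'=(x_1-x_3)\cap(x_2-x_3)=\bigl((x_1-x_3)(x_2-x_3)\bigr),\qquad I''=(x_1-x_2),
\]
and modulo $x_1-x_2$ the generator of $I'$ becomes $(x_1-x_3)^2$, so $K[x]/(I'+I'')\cong K[x_1,x_3]/\bigl((x_1-x_3)^2\bigr)$ is nonreduced.  Thus $I'+I''$ is \emph{not} radical, and the two subarrangements do not meet scheme-theoretically transversally.  In this example the thickened ring happens still to be Cohen--Macaulay of dimension $i$, so the depth lemma goes through; but your argument as written asserts radicality, and that route is blocked.

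What you actually need is the weaker statement $\depth K[x]/(I'+I'')\ge i$, and for this the identification of the reduced locus with a simplex skeleton is insufficient: one must control the scheme structure of $I'+I''$ directly.  This does not obviously follow from the Garnir description of $I''$ and the hypothesis $2k>n$, and it is not clear that it is any easier than the original statement.  Absent a proof of that depth bound (or of the Gr\"obner/shellability alternative you mention, which as you note repackages the same difficulty), the inductive step does not close and the argument is incomplete.
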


See \cite[Proposition 3.9]{EGL}. This result is only stated for $m=1$, but we can use Remark~\ref{rmk:reducem=1} to reduce to this case. In \cite{EGL}, the variety $X_i^{(1)}$ is denoted $X_{[n/(n-i)]}$. We remark that in characteristic $0$, the minimal free resolution (which is in fact a complex of modules over the Cherednik algebra for suitable parameters) of the coordinate ring of $X_i^{(1)}$ is constructed in \cite[\S 5]{BGS}.

Now we calculate the Hilbert series of $K[x_1, \dots, x_n] / I_i^{(m)}$. 

Call a subgroup $M'$ of a finitely generated free Abelian group $M$ {\bf saturated} if $M/M'$ is free. This implies that we can give $M'$ the structure of a closed subscheme of $M$ by defining its coordinate ring to be $\Sym(M^*) / I$ where $I$ is the linear ideal generated by $(M/M')^*$ (here $(-)^* = \hom_\bZ(-,\bZ)$). Given a collection $Y$ of saturated subgroups of a finitely generated free Abelian group, we can view $Y$ as a scheme over $\bZ$. For a field $K$, let $Y(K)$ be the base change of $Y$ to $K$.

\begin{lemma} \label{lem:flatsubspaces}
Let $Y$ be a collection of saturated subgroups of $\bZ^n$, viewed as a scheme. Then the Hilbert series of $Y(K)$ is independent of the field $K$, i.e., $Y$ is flat over $\bZ$.
\end{lemma}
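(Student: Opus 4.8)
The plan is to reduce the statement to a torsion-freeness assertion over $\bZ$ and then exhibit an explicit embedding into a manifestly torsion-free module. Write $R = \Sym(M^*)$, a graded polynomial ring over $\bZ$, and for each $M' \in Y$ let $I_{M'} \subseteq R$ be the linear ideal generated by $(M/M')^*$. Since $Y$ is the scheme-theoretic union of the linear subschemes cut out by the $I_{M'}$, its coordinate ring is $A = R/J$ with $J = \bigcap_{M' \in Y} I_{M'}$. The ring $A$ is graded, each graded piece $A_d$ is a finitely generated $\bZ$-module, and base change respects the grading in the sense that $(A \otimes_\bZ K)_d = A_d \otimes_\bZ K$. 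Because $\bZ$ is a PID, a finitely generated $\bZ$-module is flat iff free iff torsion-free, and in that case $\dim_K(A_d \otimes_\bZ K) = \rank_\bZ A_d$ for every field $K$. Thus both the flatness of $A$ over $\bZ$ and the independence of the Hilbert series of $Y(K)$ from $K$ follow at once, provided I can show that $A$ is torsion-free over $\bZ$.

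First I would check that each individual quotient $R/I_{M'}$ is free as a $\bZ$-module. Here the saturation hypothesis is exactly what is needed: since $M/M'$ is free, the surjection $M \to M/M'$ splits, so $M \cong M' \oplus (M/M')$, and dualizing yields a split short exact sequence $0 \to (M/M')^* \to M^* \to (M')^* \to 0$. In particular $(M/M')^*$ is a direct summand of $M^*$, so choosing a $\bZ$-basis of $M^*$ extending a basis of $(M/M')^*$ identifies $R/I_{M'}$ with a polynomial ring over $\bZ$ in the complementary variables, which is free (hence torsion-free) over $\bZ$.

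Then I would conclude as follows. By definition of $J$ as the intersection of the $I_{M'}$, the ideal $J$ is the kernel of the product of quotient maps $R \to \prod_{M' \in Y} R/I_{M'}$, so $A = R/J$ embeds as a $\bZ$-submodule of $\prod_{M' \in Y} R/I_{M'}$. A product of torsion-free $\bZ$-modules is torsion-free, and any submodule of a torsion-free module is torsion-free; hence $A$ is torsion-free over $\bZ$, completing the argument. This works verbatim whether the collection $Y$ is finite or infinite.

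The argument is short, and the only genuine input is the saturation observation guaranteeing that each linear stratum $R/I_{M'}$ is $\bZ$-free; everything else is the standard dictionary over a PID between flatness, freeness, torsion-freeness, and invariance of fiber dimension under base change. The one point requiring a little care is the grading bookkeeping: I must confirm that $A$ is finitely generated in each degree and that tensoring with $K$ commutes with the direct-sum decomposition by degree, so that ``flat over $\bZ$'' and ``Hilbert series independent of $K$'' really are two reformulations of the same torsion-freeness statement.
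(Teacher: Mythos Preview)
Your proof is correct and follows essentially the same approach as the paper: embed $R/J$ into $\bigoplus_{M'} R/I_{M'}$ (the paper uses a finite direct sum, you use a product), observe each summand is $\bZ$-free by the saturation hypothesis, and conclude torsion-freeness hence freeness of each graded piece. You supply more detail on the splitting argument and the PID dictionary, but the key idea is identical.
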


\begin{proof}
Let $Y = Y_1 \cup \cdots \cup Y_N$ be the subgroups in $Y$. Let $A = \bZ[x_1, \dots, x_n]$ and let $I_j \subset A$ be the ideal of $Y_j$ (which is generated by linear forms). Then $A/I_j$ is a free $\bZ$-module for each $j$. The ideal of $Y$ is defined by $I_1 \cap \cdots \cap I_N$, and we have an injection
\[
0 \to A / (I_1 \cap \cdots \cap I_N) \to A/I_1 \oplus \cdots \oplus A/I_N,
\]
which implies that $A/(I_1 \cap \cdots \cap I_N)$ is a free $\bZ$-module. In particular, the dimensions of the graded pieces of $A/(I_1 \cap \cdots \cap I_N) \otimes K$ are independent of $K$, which finishes the proof.
\end{proof}

For the purposes of the paper, we will be most interested in the variety $X^{(m)}_i$ when $2i < n$. In particular, the ideal $I_i^{(m)}$ is generated by Garnir polynomials associated to the partition of shape $(n-i-1,i+1)$ if $n \ge 2i+2$, or of shape $(i,i,1)$ if $n=2i+1$.

\begin{proposition}
If $n \ge 2i+2$, then the Hilbert series of $X_i^{(1)}$ is
\[
\frac{\sum_{j=0}^i \binom{n-i+j-2}{j} t^j + \binom{n-1}{i-1} t^{i+1}}{(1-t)^{i+1}}.
\]
If $n=2i+1$, then the Hilbert series of $X_i^{(1)}$ is 
\[
\frac{\sum_{j=0}^{i+1} \binom{n-i+j-2}{j} t^j}{(1-t)^{i+1}}.
\]
\end{proposition}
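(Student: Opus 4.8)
The plan is to use Cohen--Macaulayness to reduce the problem to determining finitely many coefficients of the $h$-polynomial, and then to pin those coefficients down using only three inputs: the multiplicity, the degree in which $I_i^{(1)}$ is generated, and nonnegativity of the $h$-vector.

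First I would set up the reductions. By Lemma~\ref{lem:flatsubspaces} the Hilbert series is independent of $K$, so I may pass to $K=\bQ$; there $I_i^{(1)}=\bigcap_S I_S^{(1)}$ is an intersection of the radical primes $I_S^{(1)}$, hence radical, and $R=K[x_1,\dots,x_n]/I_i^{(1)}$ is reduced and equidimensional with irreducible components the flats $X_S^{(1)}$ ($\#S=n-i$), each a linear subspace of dimension $i+1$. By Proposition~\ref{prop:CMkequals} (applicable since $2i<n$) $R$ is Cohen--Macaulay of dimension $i+1$, so its Hilbert series is $h(t)/(1-t)^{i+1}$ with $h\in\bZ[t]$, and after quotienting by a linear system of parameters the coefficients $h_j=\dim_K\bar R_j$ of an Artinian reduction $\bar R$ are \emph{nonnegative}. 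Moreover $h(1)$ is the multiplicity $e(R)$, which by additivity of multiplicity over the top-dimensional components (each a linear space of multiplicity $1$) equals the number of flats, $\binom{n}{n-i}=\binom{n}{i}$.

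Next I would read off the low-degree part of $h$ from the generation degree. Since $I_i^{(1)}$ is generated by the Garnir polynomials of shape $(n-i-1,i+1)$ (resp.\ $(i,i,1)$ when $n=2i+1$), which live in degree $i+1$ (resp.\ $i+2$), there are no relations below that degree, so $\dim_K R_d=\dim_K\Sym^d(\fh^*)=\binom{n-1+d}{d}$ for $d\le i$ (resp.\ $d\le i+1$). Comparing with $h(t)/(1-t)^{i+1}$ through the identity $\frac{1}{(1-t)^{n-i-1}}\cdot\frac{1}{(1-t)^{i+1}}=\frac{1}{(1-t)^{n}}$ forces $h_j=\binom{n-i+j-2}{j}$ throughout that range. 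In the boundary case $n=2i+1$ this already determines $h_0,\dots,h_{i+1}$, and a hockey-stick computation gives $\sum_{j=0}^{i+1}h_j=\binom{n}{i}=h(1)$; since every $h_j\ge 0$, all remaining coefficients vanish and the stated formula follows.

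The only place I expect real computation is the single coefficient $h_{i+1}$ when $n\ge 2i+2$. Here $(I_i^{(1)})_{i+1}$ is exactly the span of the degree-$(i+1)$ Garnir polynomials, i.e.\ the Specht module $S_{(n-i-1,i+1)}$, whose dimension is the two-row count $\binom{n}{i+1}-\binom{n}{i}$; hence $\dim_K R_{i+1}=\binom{n+i}{i+1}-\binom{n}{i+1}+\binom{n}{i}$, and solving for $h_{i+1}$ using the already-known $h_0,\dots,h_i$ yields $h_{i+1}=\binom{n-1}{i-1}$ after two applications of Pascal's rule. With $h_0,\dots,h_{i+1}$ in hand their sum is $\binom{n-1}{i}+\binom{n-1}{i-1}=\binom{n}{i}=h(1)$, so nonnegativity forces $h_j=0$ for $j\ge i+2$. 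The crux of the whole argument is the Cohen--Macaulay input: it both fixes the denominator as $(1-t)^{i+1}$ and supplies the nonnegativity that lets the multiplicity $\binom{n}{i}$ truncate the $h$-vector. Without it one would instead have to bound $\reg(R)\le i+1$ directly --- e.g.\ by extracting it from the minimal free resolution of \cite{BGS} or by an induction over the flats using Mayer--Vietoris sequences --- where controlling the scheme-theoretic intersections is delicate, since naive inclusion--exclusion on Hilbert series already fails for three concurrent lines in the plane.
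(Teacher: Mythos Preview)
Your proposal is correct and follows essentially the same route as the paper's proof: reduce to $\bQ$ via the flatness lemma, invoke Cohen--Macaulayness to write the Hilbert series as $h(t)/(1-t)^{i+1}$ with nonnegative $h$-vector, read off $h_0,\dots,h_i$ (resp.\ $h_0,\dots,h_{i+1}$) from the generation degree of $I_i^{(1)}$, compute $h_{i+1}$ in the case $n\ge 2i+2$ from $\dim S_{(n-i-1,i+1)}$, and then use $h(1)=\binom{n}{i}$ together with nonnegativity to kill the remaining coefficients. The only cosmetic differences are that the paper expresses $\dim S_{(n-i-1,i+1)}$ as $\binom{n-1}{i+1}-\binom{n-1}{i-1}$ rather than your equivalent $\binom{n}{i+1}-\binom{n}{i}$, and phrases the $h_{i+1}$ computation as ``subtract the number of new equations from what the coefficient would be with no equations'' rather than solving directly from $\dim R_{i+1}$.
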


\begin{proof}
Set $X = X_i^{(1)}$. It is clear that each subspace in $X$ comes from a saturated subgroup: we are just setting certain coordinates equal to one another. So by Lemma~\ref{lem:flatsubspaces}, the Hilbert series of $X$ is independent of the field that we work over. So we can calculate it by working over a field of characteristic $0$, so for the remainder of the proof we set $K = \bQ$. In this case, we know by Proposition~\ref{prop:CMkequals} that the coordinate ring of $X$ is Cohen--Macaulay. This implies that the coefficients of the numerator polynomial are nonnegative.

First suppose that $n \ge 2i+2$. Since the ideal of $X$ is generated by polynomials of degree $i+1$, the Hilbert function of $X$ agrees with that of $\bQ[x_1, \dots, x_n]$ in degrees $\le i$, so its Hilbert series is of the form
\[
\frac{(\sum_{j=0}^{i} \binom{n-i + j-2}{j} t^j) + t^{i+1}Q(t)}{(1-t)^{i+1}}.
\]
The sum counts the number of monomials of degree at most $i$ in $n-i-1$ variables, which (by homogenization) is the same as the number of monomials of degree exactly $i$ in $n-i$ variables, and this number is $\binom{n-1}{i}$. Plugging in $t=1$ into the numerator gives $\deg X$, which we know is $\binom{n}{i}$ since $X$ is a union of $\binom{n}{i}$ linear subspaces of the same dimension. So $Q(1) = \binom{n}{i} - \binom{n-1}{i} = \binom{n-1}{i-1}$. If we had no equations of degree $i+1$, then the coefficient of $t^{i+1}$ would be $\binom{n-1}{i+1}$, but we have added 
\[
\dim S_{(n-i-1,i-1)} = \frac{n!(n-2i-1)}{(i+1)!(n-i)!} = \binom{n-1}{i+1} - \binom{n-1}{i-1}
\]
many equations, so we conclude that the coefficient of $t^{i+1}$ is $\binom{n-1}{i-1}$. Since the coefficients of $Q(t)$ are nonnegative, we conclude that $Q(t)$ must be a constant equal to $\binom{n-1}{i-1}$.

Now we consider the case $n=2i+1$. In this case, the Garnir polynomials live in degree $i+2$. The sum $\sum_{j=0}^{i+1} \binom{n-i+j-2}{j}$, by the previous reasoning, is $\binom{n}{i-1}$, which is the same as $\deg X$. Again by the Cohen--Macaulay property, the coefficients of the numerator of the Hilbert series must be nonnegative, so this implies that the claimed Hilbert series is correct.
\end{proof}

\begin{lemma} \label{lem:powerhilb}
Let $I \subset A = K[x_1, \dots, x_n]$ be an ideal generated by $f_1, \dots, f_r$ and let $I^{(m)}$ be the ideal generated by $f_1(x_1^m, \dots, x_n^m), \dots, f_r(x_1^m, \dots, x_n^m)$. If the Hilbert series of $I$ is $P(t)/(1-t)^d$, then the Hilbert series of $I^{(m)}$ is 
\[
\frac{P(t^m)(1+t+\cdots+t^{m-1})^{n-d}}{(1-t)^d}.
\]
\end{lemma}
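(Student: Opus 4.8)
The plan is to exploit the substitution $\phi\colon A \to A$, $x_i \mapsto x_i^m$, which realizes $A = K[x_1,\dots,x_n]$ as a free module over the polynomial subring $B = K[x_1^m,\dots,x_n^m]$. (Throughout I read ``Hilbert series of $I$'' and ``of $I^{(m)}$'' as the Hilbert series of the quotients $A/I$ and $A/I^{(m)}$, as in the preceding computations.) Viewing $\phi$ as a ring isomorphism onto $B$, its image $\phi(I) \subset B$ is the ideal generated by $\phi(f_1),\dots,\phi(f_r) = f_1(x_1^m,\dots,x_n^m),\dots,f_r(x_1^m,\dots,x_n^m)$, and by definition $I^{(m)} = \phi(I)\cdot A$ is the ideal of $A$ that these elements generate. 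The one point to keep track of is that $\phi$ multiplies degrees by $m$: since $(B/\phi(I))_j$ vanishes unless $m \mid j$ and $\dim_K (B/\phi(I))_{mk} = \dim_K(A/I)_k$, we have
\[
H_{B/\phi(I)}(t) = H_{A/I}(t^m) = \frac{P(t^m)}{(1-t^m)^d}.
\]

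Next I would record the freeness explicitly: writing each exponent vector uniquely as $mq + s$ with $0 \le s_i \le m-1$, one sees that $A$ is a free graded $B$-module with basis the monomials $x^s = x_1^{s_1}\cdots x_n^{s_n}$, $0 \le s_i \le m-1$, the basis element $x^s$ sitting in degree $|s| = s_1 + \cdots + s_n$. The generating function recording the degrees of this basis is
\[
\prod_{i=1}^n (1 + t + \cdots + t^{m-1}) = (1+t+\cdots+t^{m-1})^n.
\]
The key (and main) step is then to establish the isomorphism of graded $B$-modules
\[
A/I^{(m)} \;\cong\; \bigoplus_{0 \le s_i \le m-1} (B/\phi(I))\, x^s,
\]
where the summand indexed by $s$ is shifted up in degree by $|s|$. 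Concretely, write any element of $A$ uniquely as $\sum_s b_s x^s$ with $b_s \in B$; I claim $\sum_s b_s x^s \in I^{(m)}$ if and only if $b_s \in \phi(I)$ for every $s$. One direction is immediate, and the other follows because any element of $I^{(m)} = \sum_j A\,\phi(f_j)$, after expanding each $A$-coefficient in the free basis and collecting the $x^s$, has every $B$-coefficient lying in $\phi(I) = \sum_j B\,\phi(f_j)$; equivalently, $A$ is $B$-flat, so tensoring $0 \to \phi(I) \to B \to B/\phi(I) \to 0$ with $A$ identifies $A/I^{(m)}$ with $(B/\phi(I)) \otimes_B A$, which is the displayed direct sum since $A$ is $B$-free on the $x^s$. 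I expect this decomposition to be the only real content of the proof; everything else is bookkeeping.

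Finally I would assemble the Hilbert series. Summing the contributions of the summands, each shifted by $|s|$, multiplies $H_{B/\phi(I)}(t)$ by the basis generating function, giving
\[
H_{A/I^{(m)}}(t) = (1+t+\cdots+t^{m-1})^n \cdot \frac{P(t^m)}{(1-t^m)^d}.
\]
Using the factorization $1 - t^m = (1-t)(1+t+\cdots+t^{m-1})$, hence $(1-t^m)^d = (1-t)^d(1+t+\cdots+t^{m-1})^d$, the factors of $(1+t+\cdots+t^{m-1})$ cancel down to the net power $n-d$, yielding
\[
H_{A/I^{(m)}}(t) = \frac{P(t^m)(1+t+\cdots+t^{m-1})^{n-d}}{(1-t)^d},
\]
which is the claimed formula.
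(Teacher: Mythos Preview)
Your proof is correct and follows essentially the same approach as the paper: both identify $A$ as a free module over the subring $B = K[x_1^m,\dots,x_n^m]$ (the paper calls it $A^{(m)}$), compute the Hilbert series of $B/\phi(I)$ as $P(t^m)/(1-t^m)^d$, and then use $A/I^{(m)} \cong (B/\phi(I)) \otimes_B A$ together with the basis generating function $(1+t+\cdots+t^{m-1})^n$ to finish. Your write-up is slightly more explicit about the direct-sum decomposition and the flatness justification, but the argument is the same.
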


\begin{proof}
Let $A^{(m)} \subset A$ be the subring generated by $x_1^m, \dots, x_n^m$. Let $J \subset A^{(m)}$ be the ideal in $A^{(m)}$ generated by $f_1(x_1^m, \dots, x_n^m), \dots, f_r(x_1^m, \dots, x_n^m)$. Then the substitution $g(x_1, \dots, x_n) \mapsto g(x_1^m, \dots, x_n^m)$ gives an isomorphism from $I$ to $J$ which multiplies degrees by $m$. Hence the Hilbert series of $A^{(m)}/J$ is $P(t^m) / (1-t^m)^d$. 

We have $A/I^{(m)} = (A^{(m)}/J) \otimes_{A^{(m)}} A$. Since $A$ is a free $A^{(m)}$-module, and the degrees of the basis elements are given by the generating function $(1+t+ \cdots + t^{m-1})^n$, we conclude that the Hilbert series of $A/I^{(m)}$ is
\[
\frac{P(t^m)}{(1-t^m)^d} \frac{(1-t^m)^n}{(1-t)^n} = \frac{P(t^m)(1+t+\cdots+t^{m-1})^{n-d}}{(1-t)^d}. \qedhere
\]
\end{proof}

\begin{remark}
In the above lemma, we could replace $x_i^m$ by any degree $m$ homogeneous polynomials $g_1, \dots, g_n$ which form a regular sequence.
\end{remark}

\begin{corollary}
If $n \ge 2i+2$, then the Hilbert series of $A/I_i^{(m)}$ is
\[
\frac{(\sum_{j=0}^i \binom{n-i+j-2}{j} t^{mj} + \binom{n-1}{i-1} t^{m(i+1)})(1+t +\cdots + t^{m-1})^{n-i-1}}{(1-t)^{i+1}}.
\]
If $n =2i+1$, then the Hilbert series of $A/I_i^{(m)}$ is 
\[
\frac{(\sum_{j=0}^{i+1} \binom{n-i+j-2}{j} t^{mj})(1+t+\cdots+t^{m-1})^i}{(1-t)^{i+1}}.
\]
\end{corollary}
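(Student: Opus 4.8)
The plan is to reduce immediately to the $m=1$ case already computed in the previous Proposition, and then transport the answer through Lemma~\ref{lem:powerhilb}. Recall from the discussion in this section that $I_i^{(m)}$ is obtained from $I_i^{(1)}$ by the substitution $x_j \mapsto x_j^m$ applied to a generating set; this is exactly the hypothesis under which Lemma~\ref{lem:powerhilb} computes the Hilbert series of the substituted ideal in terms of that of the original.

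Concretely, in the regime $n \ge 2i+2$ I would read off from the Proposition that the Hilbert series of $A/I_i^{(1)}$ is $P(t)/(1-t)^{i+1}$ with
\[
P(t) = \sum_{j=0}^i \binom{n-i+j-2}{j} t^j + \binom{n-1}{i-1} t^{i+1},
\]
so that $d = i+1$ in the notation of the Lemma. Substituting into the formula $P(t^m)(1+t+\cdots+t^{m-1})^{n-d}/(1-t)^d$ and using $n - d = n - i - 1$ produces exactly the claimed numerator and denominator. The case $n = 2i+1$ is identical: here $P(t) = \sum_{j=0}^{i+1} \binom{n-i+j-2}{j} t^j$ and again $d = i+1$, so that $n - d = i$, which yields the second displayed formula.

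The only point needing genuine care is the applicability of Lemma~\ref{lem:powerhilb}: that lemma starts from an ideal $I$ generated by $f_1, \dots, f_r$ and forms $I^{(m)}$ from the substituted generators $f_k(x_1^m, \dots, x_n^m)$, whereas $I_i^{(1)} = \bigcap_S I_S^{(1)}$ is defined as an intersection. I would therefore want to confirm that the substitution $x_j \mapsto x_j^m$ carries a generating set of $I_i^{(1)}$ to a generating set of $I_i^{(m)} = \bigcap_S I_S^{(m)}$ — equivalently, that forming this intersection commutes with the substitution — which is precisely the transformation asserted earlier in this section when passing from $I_i^{(1)}$ to $I_i^{(m)}$. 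Granting that, nothing beyond the two substitutions above is required, so this verification is the main (and essentially the only) obstacle.
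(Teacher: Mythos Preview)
Your proposal is correct and is exactly the intended argument: the paper states this Corollary with no proof, since it is immediate from the preceding Proposition (the $m=1$ Hilbert series) together with Lemma~\ref{lem:powerhilb}, with $d=i+1$ in both cases. Your one concern, that the substitution $x_j \mapsto x_j^m$ carries $I_i^{(1)} = \bigcap_S I_S^{(1)}$ to $I_i^{(m)} = \bigcap_S I_S^{(m)}$, is handled by the fact that $A$ is free (hence flat) over the subring $A^{(m)}=K[x_1^m,\dots,x_n^m]$, so extension of ideals along $A^{(m)} \subset A$ commutes with finite intersections; the paper simply asserts this transformation earlier in the section without further comment.
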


\begin{proposition} 
The affine variety $X_1^{(m)}$ is Gorenstein.
\end{proposition}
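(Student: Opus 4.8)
The plan is to first reduce to the case $m=1$ and then to recognize $X_1^{(1)}$ as a cone over a set of points in linearly general position. Write $A = K[x_1, \dots, x_n]$; the codimension of $X_1^{(m)}$ is $c = n-2$. For $n \le 2$ the ideal $I_1^{(m)}$ is zero and $A/I_1^{(m)}$ is a polynomial ring, hence regular, so we assume $n \ge 3$.

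\textbf{Reduction to $m=1$.} Let $B = K[z_1, \dots, z_n]/\widetilde J$, where $\widetilde J$ is the image of $I_1^{(1)}$ under $x_i \mapsto z_i$; as a graded ring $B$ is isomorphic to $A/I_1^{(1)}$ after scaling all degrees by $m$, which does not affect the Gorenstein property. Exactly as in the proof of Lemma~\ref{lem:powerhilb}, the substitution $z_i \mapsto x_i^m$ identifies
\[
A/I_1^{(m)} \;\cong\; B[x_1, \dots, x_n]/(x_1^m - z_1, \dots, x_n^m - z_n).
\]
The elements $x_1^m - z_1, \dots, x_n^m - z_n$ are monic in distinct variables, so they form a regular sequence on the polynomial ring $B[x_1, \dots, x_n]$. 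Since the Gorenstein property is preserved both under adjoining polynomial variables and under quotienting by a regular sequence, it suffices to prove that $B$, that is $A/I_1^{(1)}$, is Gorenstein. (This step is characteristic-free.)

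\textbf{The cone structure for $m=1$.} By definition $X_1^{(1)} = \bigcup_{k=1}^n V_k$, where $V_k = \{\, x : x_j = x_l \text{ for all } j,l \ne k \,\}$ is the plane $\Delta + \langle e_k \rangle$ spanned by the diagonal line $\Delta = \langle (1, \dots, 1)\rangle$ and the $k$th coordinate vector. Thus every component contains $\Delta$, and $X_1^{(1)}$ is a cone with vertex $\Delta$. Concretely, the Garnir generators of $I_1^{(1)}$ are products of the differences $x_i - x_j$ and hence are invariant under translation along $\Delta$; writing $A = K[s, d_1, \dots, d_{n-1}]$ with $s = x_1 + \cdots + x_n$ and $d_i = x_i - x_n$, we get $I_1^{(1)} \subset K[d_1, \dots, d_{n-1}]$ and therefore
\[
A/I_1^{(1)} \;\cong\; \bigl(K[d_1, \dots, d_{n-1}]/I'\bigr)[s],
\]
where $R := K[d_1, \dots, d_{n-1}]/I'$ is the coordinate ring of the image $Y$ of $X_1^{(1)}$ in $V/\Delta \cong K^{n-1}$. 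As $s$ is a polynomial variable, $A/I_1^{(1)}$ is Gorenstein if and only if $R$ is. Here $Y = \bigcup_{k=1}^n \langle \bar e_k\rangle$ is the union of the $n$ lines through the images $\bar e_k$ of the coordinate vectors, so $R$ is the homogeneous coordinate ring of the $n$ points $[\bar e_k] \in \mathbb{P}^{n-2}$.

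\textbf{The main obstacle.} These $n$ points are in linearly general position, since any $n-1$ of the vectors $\bar e_k$ are independent in $V/\Delta$: a relation $\sum_{k \ne l} c_k e_k \in \Delta$ forces every $c_k = 0$ upon comparing coordinates. So we are reduced to showing that $n = (n-2)+2$ points in linearly general position in $\mathbb{P}^{n-2}$ are arithmetically Gorenstein, and this is where the real content lies. Since a reduced finite set of points is automatically arithmetically Cohen--Macaulay, $R$ has Krull dimension $1$ and a general linear form $\ell$ is a nonzerodivisor; the Artinian reduction $R/\ell R$ has Hilbert function the symmetric $h$-vector $(1, n-2, 1)$ recorded in the proposition above. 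Gorensteinness is then equivalent to the socle of $R/\ell R$ being one-dimensional. The top degree $2$ is one-dimensional and lies in the socle, so the only thing to rule out is a degree-$1$ socle element, i.e. one must show that the symmetric multiplication pairing $(R/\ell R)_1 \times (R/\ell R)_1 \to (R/\ell R)_2 \cong K$ is nondegenerate. This nondegeneracy is precisely the classical self-association (Cayley--Bacharach) property of $d+2$ general points in $\mathbb{P}^d$; verifying it for our explicit configuration --- the $n-1$ coordinate points of $\mathbb{P}^{n-2}$ together with $[1:\cdots:1]$ (after choosing $\bar e_1, \dots, \bar e_{n-1}$ as a basis, so that $\bar e_n = -(\bar e_1+\cdots+\bar e_{n-1})$) --- is the step I expect to require the most work. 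Once it is in hand, $\rank \bF_c = 1$ and $R$ is Gorenstein, hence so is $A/I_1^{(1)} \cong R[s]$, and by the reduction above so is $A/I_1^{(m)}$.
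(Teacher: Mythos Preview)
Your approach is essentially identical to the paper's: reduce to $m=1$ via the substitution $x_i \mapsto x_i^m$, split off the diagonal line to write $K[X_1^{(1)}]$ as a polynomial ring over the homogeneous coordinate ring of $n$ points in $\mathbb{P}^{n-2}$, observe that these points are in linearly general position, and conclude Gorensteinness. The one place you diverge is at what you call ``the main obstacle'': the paper does not verify the nondegeneracy of the multiplication pairing by hand but simply invokes the theorem of Davis--Geramita--Orecchia \cite{cayleybacharach} (or Eisenbud--Popescu \cite{popescu}) that $d+2$ points in linearly general position in $\mathbb{P}^d$ are arithmetically Gorenstein --- so the step you left open is dispatched by citation rather than computation.

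One small caveat (shared with the paper's own write-up): the linear form $s = x_1 + \cdots + x_n$ is not a complement to the span of the differences $x_i - x_n$ when the characteristic divides $n$, since then $s = d_1 + \cdots + d_{n-1}$; in that case replace $s$ by $x_n$ (or any linear form independent of the differences) and the rest of the argument goes through unchanged.
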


\begin{proof} 
First assume that $m=1$. Note that $X_1^{(1)}$ is the intersection of $n$ planes $Y_i$ where $Y_i$ is the plane given by the conditions $x_j = x_k$ when $i \ne j$ and $i \ne k$. Let $J_i$ be the ideal defining $Y_i$. So we can write the coordinate ring of $X$ as
\[
K[X_1^{(1)}] = \left(\frac{K[x_1 - x_2, x_1 - x_3, \dots, x_1 - x_n]}{J_1 \cap \cdots \cap J_n}\right)[x_1 + \cdots + x_n].
\]
Write $z_i = x_1 - x_{i+1}$. We will show that $K[z_1, \dots, z_{n-1}] / (J_1 \cap \cdots \cap J_n)$ is Gorenstein. If we projectivize, we get $n$ points in $\bP^{n-2}$, where the projectivization of $Y_1$ is $[1 : 1 : \cdots : 1]$, and the projectivization of $Y_i$ for $i>1$ is $[0 : \cdots : 1 : \cdots : 0]$ which has a 1 in the $i$th spot. These are in {\it linearly general position}, (any $d$-dimensional linear subspace of $\bP^{n-2}$ with $d < n-2$ contains at most $d+1$ of the points). Hence the ring $K[z_1, \dots, z_{n-1}] / (J_1 \cap \cdots \cap J_n)$ is Gorenstein by \cite[Theorem 5]{cayleybacharach} or \cite[Theorem 7.2]{popescu}.

In particular, the minimal free resolution of $K[X^{(1)}_1]$ is self-dual. The case of general $m$ follows from $m=1$ from the substitution $x_i \mapsto x_i^m$: $x_1^m, \dots, x_n^m$ forms a regular sequence, so the minimal free resolution of $K[X^{(m)}_1]$ is self-dual, which implies that it is Gorenstein. 
\end{proof}

\begin{conjecture} When ${\rm char}(K)>i$ and $n>2i$, $X_i^{(m)}$ is Cohen--Macaulay and its coordinate ring is a level algebra, i.e., the last term in its minimal free resolution is generated in a single degree.
\end{conjecture}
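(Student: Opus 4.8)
The plan is to reduce to the case $m=1$ and then to transport the characteristic-zero Cohen--Macaulayness of Proposition~\ref{prop:CMkequals}, together with the explicit characteristic-zero resolution of \cite[\S5]{BGS}, to characteristic $p>i$ by a reduction-mod-$p$ argument; the numerical bound $p>i$ should come from controlling the primes at which the Garnir-type syzygies degenerate.

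First I would reduce to $m=1$. Since $K$ contains primitive $m$th roots of unity we have $p \nmid m$, so $x_1^m, \dots, x_n^m$ is a regular sequence on $A$ and the substitution $x_j \mapsto x_j^m$ carries $I_i^{(1)}$ to $I_i^{(m)}$ (Lemma~\ref{lem:powerhilb}). Writing $A^{(m)} \subset A$ for the subring generated by the $x_j^m$ and $J$ for the image of $I_i^{(1)}$, the minimal free resolution of $A^{(m)}/J$ over $A^{(m)}$ base-changes along the flat (indeed free) extension $A^{(m)} \hookrightarrow A$ to a resolution of $A/I_i^{(m)}$, and since its differentials have entries in $(x_1^m, \dots, x_n^m) \subseteq \fm$ this resolution stays minimal. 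Hence $A/I_i^{(m)}$ and $A/I_i^{(1)}$ have the same graded Betti numbers up to scaling degrees by $m$; in particular $\pdim$ is unchanged, so Cohen--Macaulayness transfers, and $\bF_c$ is generated in a single degree for one ring iff it is for the other, so levelness transfers as well. Thus it suffices to treat $m=1$.

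For the Cohen--Macaulay claim with $m=1$, the codimension is $c = \codim X_i^{(1)} = n-i-1$, and by Auslander--Buchsbaum \eqref{eqn:AB} one always has $\pdim(A/I_i^{(1)}) \ge c$ with equality exactly when the ring is Cohen--Macaulay. By Lemma~\ref{lem:flatsubspaces} the Hilbert series of $A/I_i^{(1)}$ is independent of $K$, and over $\bQ$ the ring is Cohen--Macaulay with the known minimal free resolution $\bF_\bullet$ of \cite[\S5]{BGS}. I would fix an integral form $\bF^{\bZ}_\bullet$ (the differentials are Garnir syzygies with integer entries) and show that $\bF^{\bZ}_\bullet \otimes_{\bZ} K$ stays acyclic once $p>i$, via the Buchsbaum--Eisenbud criterion: the ranks of the differentials $\partial_k \otimes K$ must not drop, which amounts to certain integer maximal minors built from the syzygy coefficients remaining nonzero modulo $p$, while the requisite depth conditions on the ideals of minors persist because those ideals cut out the same char-independent loci. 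Acyclicity then forces $\pdim = c$, i.e. Cohen--Macaulayness.

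Granting that the resolution survives, the level property follows by inspecting the top term $\bF_c$ of the \cite{BGS} resolution in characteristic zero: one checks it is generated in a single degree (equivalently, by the criterion recalled in \S\ref{sec:background}, that the socle of a generic Artinian reduction lies in one degree), and since the Betti numbers are unchanged under reduction mod $p>i$ by the previous step, the single-degree generation of $\bF_c$ descends to characteristic $p>i$. The main obstacle throughout is the \emph{uniform} bound $p>i$: Proposition~\ref{prop:CMkequals} yields only ``sufficiently large'' characteristic, and sharpening this to the explicit threshold requires pinning down precisely which primes can make the integral homology of $\bF^{\bZ}_\bullet$ acquire torsion, or equivalently can cause a syzygy rank to drop. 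This is governed by the modular representation theory of the symmetric group acting on the Garnir polynomials of shape $(n-i-1,i+1)$ (or $(i,i,1)$ when $n=2i+1$): the syzygy ranks are stable and the Specht modules behave generically away from small primes. Making this torsion analysis effective, rather than merely asymptotic, is the real work, which is presumably why only small cases can be settled.
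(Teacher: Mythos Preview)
The statement you are addressing is a \emph{conjecture} in the paper; the authors do not supply a proof, only the remark that it fails without the hypothesis $\ch(K)>i$ (citing \cite[Example~5.2]{BGS}) and the observation (Remark~\ref{rmk:reducem=1}) that one may reduce to $m=1$ by the substitution $x_j \mapsto x_j^m$. Your first step reproduces exactly that reduction, and your argument there is correct.

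The remainder of your proposal is a strategy, not a proof, and you essentially say so yourself in the final paragraph. Two points deserve to be flagged as genuine gaps rather than routine verifications. First, the assertion that ``the requisite depth conditions on the ideals of minors persist because those ideals cut out the same char-independent loci'' is unjustified: the ideals of maximal minors of an integer matrix can acquire smaller depth after reduction modulo $p$ even when their radicals (and hence their zero loci) are unchanged, and this is precisely the mechanism by which a characteristic-zero resolution can fail to stay acyclic. The Buchsbaum--Eisenbud criterion needs the depth of the Fitting ideals, not merely the geometry of their vanishing loci. Second, you have not produced any link between the threshold $p>i$ and the torsion in the integral complex $\bF^{\bZ}_\bullet$; the remark that ``Specht modules behave generically away from small primes'' is suggestive but does not single out $i$ as the cutoff, and indeed the modular behaviour of $S_{(n-i-1,i+1)}$ depends on $n$ and $p$ in a more intricate way than simply $p>i$. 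Until one can show that every torsion prime of $\rH_\bullet(\bF^{\bZ}_\bullet)$ is at most $i$, the argument yields only the ``sufficiently large characteristic'' statement already contained in Proposition~\ref{prop:CMkequals}, which is what the paper records. In short: your outline is a reasonable plan of attack, but the conjecture remains open and your proposal does not close it.
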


The conjecture is false without the assumption on ${\rm char}(K)$, see \cite[Example 5.2]{BGS}.

\begin{remark} \label{rmk:reducem=1}
We can reduce to the case $m=1$ by a formal argument: namely, given a homogeneous ideal $I \subset A = K[x_1, \dots, x_n]$ such that $A/I$ is Cohen--Macaulay, and a regular sequence of homogeneous polynomials of positive degree $f_1, \dots, f_n \in B = K[y_1, \dots, y_N]$, let $\phi(x_i) = f_i$. Then $B/\phi(I)$ is also Cohen--Macaulay. For our purposes, we take $A = B$ and $f_i = x_i^m$.
\end{remark}

\section{Characters for $G(m,1,n)$ in the non-modular case} \label{sec:Gm1n}

Let $G = G(m,1,n) = \Sigma_n \ltimes (\bZ/m)^n$. In this section, we calculate the characters of the Cherednik algebra for $G$ with generic parameters (we will handle $\hbar = 0$ and $\hbar \ne 0$) in the case when the characteristic of the ground field does not divide the size of $G$. The techniques are the same as the techniques in \cite[\S 6]{gordon}, but we reproduce the arguments for completeness.

Let $\ul{\lambda} = (\lambda^0, \dots, \lambda^{m-1})$ be an $m$-tuple of partitions such that $|\ul{\lambda}| := \sum_i |\lambda^i| = n$ (the indices are elements of $\bZ/m$). These $\ul{\lambda}$ naturally index the conjugacy classes of $G$, and also the complex irreducible representations $S_{\ul{\lambda}}$ with character $\chi^{\ul{\lambda}}$ \cite[\S 5]{kerber}. By general principles, these also index the irreducible representations over an algebraically closed field of characteristic $p$ whenever $p$ does not divide the size of $G$ (the non-modular case).

Let ${\ul{\lambda}}^* = (\lambda^0, \lambda^{-1}, \dots, \lambda^{-m+1})$ denote the partition indexing the dual representation of $S_{\ul{\lambda}}$. Set
\begin{align*}
n({\ul{\lambda}}) &= \sum_{i=0}^{m-1} \bigg( i|\lambda^i| + \sum_j (j-1)\lambda^i_j \bigg),\\
(t)_n &= (1-t)(1-t^2) \cdots (1-t^n).
\end{align*}
Given a partition $\lambda$ and a box $s \in \lambda$ in its Young diagram, let ${\rm hook}(s)$ denote its hook length. Set
\begin{align*}
H_\lambda(t) &= \prod_{s \in \lambda} (1-t^{{\rm hook}(s)}), \qquad
H_{\ul{\lambda}}(t) = \prod_{i=0}^{m-1} H_{\lambda^i}(t).
\end{align*}
Let $\xi \in K$ be a primitive $m$th root of unity. Let $z_{\ul{\lambda}}$ be the size of the stabilizer subgroup of any element in the conjugacy class of ${\ul{\lambda}}$. Define
\begin{align} \label{eqn:wreathkostka}
K'_{{\ul{\mu}},{\ul{\lambda}}}(t) = 
H_{\ul{\lambda}}(t^m) \sum_{\ul{\rho}} \frac{\chi^{\ul{\lambda}}({\ul{\rho}}) \chi^{{\ul{\mu}}^*}({\ul{\rho}})}{z_{\ul{\rho}} \prod_{i,j} (1-\xi^i t^{\rho^i_j})}.
\end{align}
Finally, by \cite[(5.5)]{stembridge}, the generating function for the occurrences of $\chi^{\ul{\lambda}}$ in the coinvariants algebra for $G$ acting on its reflection representation is
\begin{align} \label{eqn:coinvariants}
f_{\ul{\lambda}}(t) = \frac{t^{n({\ul{\lambda}})} (t^m)_n}{H_{\ul{\lambda}}(t^m)}.
\end{align}

\begin{proposition} 
Consider $p$ not dividing $m^n n!$, and $\tau = S_{\ul{\lambda}}$. When $\hbar=0$, the $G$-equivariant Hilbert series of $L_c(\tau)$ is
\[
\sum_{\ul{\mu}} K'_{{\ul{\mu}}, {\ul{\lambda}}}(t) [S_{\ul{\mu}}].
\]
In particular, the usual Hilbert series is
\[
\dim(\tau) \frac{H_{\ul{\lambda}}(t^m)}{(1-t)^n}.
\]
\end{proposition}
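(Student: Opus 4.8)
The plan is to deduce the ordinary Hilbert series from the $G$-equivariant one, to rewrite the generating function $K'_{\ul\mu,\ul\lambda}(t)$ as a graded multiplicity, and then to identify the graded $G$-character of $L_c(\tau)$ using the structure theory of $\bH(G)$ at $\hbar=0$ for generic $c$, exactly as in \cite[\S 6]{gordon}. First I would reduce the ``in particular'' clause to the equivariant statement. Writing $\dim S_{\ul\mu}=\chi^{\ul\mu}(1)$, the ordinary Hilbert series is $\sum_{\ul\mu}K'_{\ul\mu,\ul\lambda}(t)\chi^{\ul\mu}(1)$; substituting \eqref{eqn:wreathkostka} and exchanging the two summations, the inner sum $\sum_{\ul\mu}\chi^{\ul\mu^*}(\ul\rho)\chi^{\ul\mu}(1)$ is, by column orthogonality of the irreducible characters of $G$, supported only on the identity class, where it equals $|G|$. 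Since the identity class has $\rho^0=(1^n)$ and all other $\rho^i$ empty, its factor $\prod_{i,j}(1-\xi^i t^{\rho^i_j})$ is $(1-t)^n$, its $z_{\ul\rho}$ is $|G|$, and $\chi^{\ul\lambda}(\ul\rho)=\dim\tau$; collecting these gives $\dim(\tau)H_{\ul\lambda}(t^m)/(1-t)^n$.

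Next I would reinterpret $K'_{\ul\mu,\ul\lambda}(t)$. The key observation is that $\prod_{i,j}(1-\xi^i t^{\rho^i_j})$ is the characteristic polynomial $\det(1-tg\mid \fh^*)$ of an element $g$ of the class $\ul\rho$, so its reciprocal is the graded trace of $g$ on $\Sym(\fh^*)$ (a Molien-type identity). Converting the sum over classes weighted by $z_{\ul\rho}^{-1}$ into an average over $G$, the orthogonality relations then identify the class sum in \eqref{eqn:wreathkostka} with the graded multiplicity of $S_{\ul\mu}$ in $\Sym(\fh^*)\otimes S_{\ul\lambda}$. Hence, writing $[V]_t=\sum_d t^d[V_d]\in R(G)[[t]]$ for the graded class of a graded $G$-module $V$, we obtain
\[
\sum_{\ul\mu}K'_{\ul\mu,\ul\lambda}(t)\,[S_{\ul\mu}] \;=\; H_{\ul\lambda}(t^m)\cdot[\Sym(\fh^*)\otimes S_{\ul\lambda}]_t .
\]
Thus the proposition is equivalent to the single identity $[L_c(\tau)]_t=H_{\ul\lambda}(t^m)\,[\Sym(\fh^*)\otimes\tau]_t$ in $R(G)[[t]]$.

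The heart of the matter is this last identity. Since $p\nmid m^n n!=|G|$ we are in the non-modular case: the irreducibles of $G$ are indexed as over $\bC$, ordinary character theory applies, and the Verma construction \eqref{eqn:vermapbw}, the Dunkl operators, and the contravariant form of Proposition~\ref{prop:betaform} are all defined over $\bZ[1/|G|][\xi]$. A flatness argument in the spirit of Lemma~\ref{lem:flatsubspaces} then shows the graded dimensions of $L_c(\tau)$ are independent of the admissible characteristic, so it suffices to work over a field of characteristic $0$. There, for generic $c$ the subalgebras $\Sym(\fh^*)^G_+$ and $\Sym(\fh)^G_+$ are central; being of positive degree they act nilpotently and, being central, by scalars on the irreducible $L_c(\tau)$, hence by $0$. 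Therefore $L_c(\tau)$ is a graded quotient of the coinvariant module $\big(\Sym(\fh^*)/(\Sym(\fh^*)^G_+)\big)\otimes\tau$, generated by its degree-$0$ piece $\tau$. For generic $c$, $\bH(G)$ is Azumaya over its smooth center, every irreducible has dimension $|G|$ and restricts to the regular representation of $G$; the identity $\dim(\tau)\prod_s\mathrm{hook}(s)=n!$ confirms $\dim L_c(\tau)=m^n n!=|G|$, matching the claimed series at $t=1$. To pin down the grading I would exploit the $\bZ$-grading of $\bH(G)$ (a contracting $\bC^\times$-action on the center) together with the self-duality of $L_c(\tau)$ up to its top degree $m\sum_s\mathrm{hook}(s)-n$ furnished by the contravariant form, degenerate the center to $(\fh\oplus\fh^*)/G$, and match the graded character of the fiber with the coinvariant computation; the factor $H_{\ul\lambda}(t^m)$ then emerges through the fake-degree identity \eqref{eqn:coinvariants}, namely $f_{\ul\lambda}(t)=t^{n(\ul\lambda)}(t^m)_n/H_{\ul\lambda}(t^m)$.

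The step I expect to be the main obstacle is this graded refinement. Orthogonality and the Azumaya structure immediately yield the ungraded $G$-module structure ($\cong K[G]$) and the total dimension $|G|$, but proving that the full graded $G$-character is exactly $H_{\ul\lambda}(t^m)\,[\Sym(\fh^*)\otimes\tau]_t$ — equivalently, that the kernel of the contravariant form has the predicted Hilbert series in every degree — is the substantive content, and is precisely what is reproduced from \cite[\S 6]{gordon}.
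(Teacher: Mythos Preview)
Your reformulation is correct and useful: the Molien interpretation of $\prod_{i,j}(1-\xi^it^{\rho^i_j})^{-1}$ and the resulting identity $\sum_{\ul\mu}K'_{\ul\mu,\ul\lambda}(t)[S_{\ul\mu}]=H_{\ul\lambda}(t^m)\,[\Sym(\fh^*)\otimes\tau]_t$ is exactly what the paper extracts from equations \eqref{eqn:wreathkostkainverse} and \eqref{eqn:regularrep}, and your derivation of the ordinary Hilbert series via column orthogonality is the paper's specialization of \eqref{eqn:wreathkostkainverse} to the identity class. The gap is in your third paragraph. The flatness reduction to characteristic $0$ is not justified: Lemma~\ref{lem:flatsubspaces} concerns intersections of coordinate linear ideals, a situation with nothing in common with the kernel of $\beta_c$, and knowing that the rank of $\beta_c$ in each degree is unchanged modulo $p$ for generic $c$ is essentially the statement you are trying to prove. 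There is no cheap argument of that kind available.

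The paper avoids this entirely by never reducing to characteristic $0$. Gordon's baby Verma machinery \cite[\S 5--6]{gordon} applies directly in characteristic $p\nmid|G|$ because the Calogero--Moser space for $G(m,1,n)$ remains nonsingular there \cite[Corollary~1.14]{calogeromoser}. The precise input is the graded Grothendieck-group identity $[M^\circ(S_{\ul\lambda})]=t^{-n(\ul\lambda^*)}f_{\ul\lambda^*}(t)\,[L(S_{\ul\lambda})]$; combined with the coinvariant description of the baby Verma and the fake-degree formula \eqref{eqn:coinvariants}, this yields your identity $[L_c(\tau)]_t=H_{\ul\lambda}(t^m)\,[\Sym(\fh^*)\otimes\tau]_t$ in one line. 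Your alternative sketch via the Azumaya property, self-duality under $\beta_c$, and degeneration of the center is too vague to count as a proof of the \emph{graded} statement: those ingredients give only the ungraded isomorphism $L_c(\tau)\cong K[G]$, and pinning down the grading is precisely the content of Gordon's argument you end up deferring to anyway.
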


\begin{proof} Let $p_{{\ul{\lambda}}, {\ul{\mu}}}(t) = \sum_i [L(S_{\ul{\lambda}})_i : S_{\ul{\mu}}] t^i$. Following \cite[\S 6.4]{gordon} (here we need to know that the Calogero--Moser space for $G$ is nonsingular, which follows from \cite[Corollary 1.14]{calogeromoser}), we can write
\[
[M(S_{\ul{\lambda}})] = \sum_{\ul{\mu}} t^{-n({\ul{\lambda}}^*)} f_{{\ul{\lambda}}^*}(t) p_{{\ul{\lambda}}, {\ul{\mu}}}(t) [S_{\ul{\mu}}].
\]
By \cite[(9)]{gordon}, we know that 
\[
[M(S_{\ul{\lambda}})] = \sum_{\ul{\mu}} f_{{\ul{\mu}}}(t) [S_{\ul{\mu}}] [S_{\ul{\lambda}}].
\]
Hence by \cite[4.4]{gordon}, it is enough to show that
\begin{align} \label{eqn:wreathtoshow} 
\sum_{\ul{\mu}} f_{{\ul{\mu}}}(t) \chi^{\ul{\mu}}({\ul{\rho}}) \chi^{\ul{\lambda}}({\ul{\rho}}) 
= \sum_{\ul{\mu}} t^{-n({\ul{\lambda}}^*)} f_{{\ul{\lambda}}^*}(t) K'_{{\ul{\mu}}, {\ul{\lambda}}}(t) \chi^{\ul{\mu}}({\ul{\rho}})
\end{align}
for all ${\ul{\rho}}$. Inverting \eqref{eqn:wreathkostka}, we see that
\begin{align} \label{eqn:wreathkostkainverse} 
\sum_{\ul{\mu}} K'_{{\ul{\mu}}, {\ul{\lambda}}}(t) \chi^{\ul{\mu}}({\ul{\rho}}) 
= \frac{H_{\ul{\lambda}}(t^m)}{\prod_{i,j} (1-\xi^it^{\rho^i_j})} \chi^{\ul{\lambda}}({\ul{\rho}}).
\end{align}
Also, by \cite[(2.5)]{stembridge}, we have
\begin{align} \label{eqn:regularrep}
\sum_{\ul{\mu}} f_{\ul{\mu}}(t) \chi^{\ul{\mu}}({\ul{\rho}}) 
= \frac{(t^m)_n}{\det(1-t{\ul{\rho}})} = \frac{(t^m)_n}{\prod_{i,j}(1- \xi^it^{\rho^i_j})}
\end{align}
where in $\det(1-t{\ul{\rho}})$, we use $\ul{\rho}$ to mean any element in the conjugacy class of ${\ul{\rho}}$ acting on the reflection representation. Now \eqref{eqn:wreathtoshow} follows from \eqref{eqn:coinvariants}, \eqref{eqn:wreathkostkainverse}, and \eqref{eqn:regularrep}. The statement about the usual Hilbert series follows by considering ${\ul{\rho}} = (n, \emptyset, \dots, \emptyset)$ in \eqref{eqn:wreathkostkainverse}.
\end{proof}

Let $A = K[x_1, \dots, x_n]$ and $A^{(p)} = K[x_1^p, \dots, x_n^p]$. Then $A$ is a free $A^{(p)}$-module, and $A = A^{(p)} \otimes Q$ for some graded $G$-representation $Q$. Write $[Q] = \sum_i [Q_i] t^i$ for its $G$-equivariant Hilbert series.

\begin{proposition} 
Consider $p$ not dividing $m^n n!$ and $\tau = S_{\ul{\lambda}}$. When $\hbar=1$, the $G$-equivariant Hilbert series of $L_c(\tau)$ is
\[ 
[Q] \sum_{\ul{\mu}} K'_{{\ul{\mu}}, {\ul{\lambda}}}(t) [S_{\ul{\mu}}].
\]
In particular, the usual Hilbert series is
\[
\dim(\tau) \frac{H_{\ul{\lambda}}(t^{mp})}{(1-t)^{n}}.
\]
\end{proposition}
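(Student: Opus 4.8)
The plan is to reduce the $\hbar = 1$ statement to the $\hbar = 0$ proposition just proved, using a Frobenius twist that is available because $\ch K = p$. Write $M_0, L_0$ and $M_1, L_1$ for the Verma and irreducible modules at $\hbar = 0$ and $\hbar = 1$. From the Dunkl operator formula, $D_y = \partial_y + T_y$ at $\hbar = 1$, where $T_y(f \otimes v) = -\sum_{s \in \cS} c_s \frac{(y,\alpha_s)}{\alpha_s}(1-s).f \otimes s.v$ is exactly the Dunkl operator of the $\hbar = 0$ theory. I would first record two elementary facts in characteristic $p$: the operator $\partial_{y_i}^p$ is identically zero on $A$ (the scalar $\partial_{y_i}^p(x^\alpha)$ is a product of $p$ consecutive integers, hence $\equiv 0 \bmod p$), and the subring $A^{(p)} = K[x_1^p, \dots, x_n^p]$ is $G$-stable and annihilated by every $\partial_{y_i}$. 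Combined with the decomposition $A = A^{(p)} \otimes Q$, this lets me regard $M_1(\tau) = A \otimes \tau$ as $Q \otimes (A^{(p)} \otimes \tau)$ and to identify $A^{(p)} \otimes \tau$, via $x_i^p \leftrightarrow x_i$, with the Frobenius twist $\operatorname{Fr}^* M_0(\tau)$: degrees are multiplied by $p$, and the $G$-action is composed with $\zeta \mapsto \zeta^p$ on the $m$th roots of unity, which is a bijection since $p \nmid m$.

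The technical core, and what I expect to be the main obstacle, is the identity
\[
D_{y_i}^p = \operatorname{id}_Q \otimes T_{y_i}^{(p)}
\]
on $M_1(\tau)$, where $T_{y_i}^{(p)}$ is the $\hbar = 0$ Dunkl operator acting on $A^{(p)} \otimes \tau \cong \operatorname{Fr}^* M_0(\tau)$ in the variables $x_i^p$. Since the Dunkl operators commute, $\{D_{y_i}^p\}$ generate a $\Sym(\fh)^{(p)}$-action, so it suffices to analyze a single $p$th power. I would expand $(\partial_{y_i} + T_{y_i})^p$ using Jacobson's formula in characteristic $p$: the leading term $\partial_{y_i}^p$ vanishes by the first fact above, and one must show the remaining iterated commutators either vanish or assemble into $T_{y_i}^{(p)}$. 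A concrete way to verify the result is on the monomial basis $x^\alpha \otimes v$, where the value of $D_{y_i}^p$ should depend only on $\lfloor \alpha_i / p \rfloor$ and reproduce the $\hbar = 0$ operator in $x_i^p$, while acting trivially on the $Q$-factor (exponents $< p$); carrying this out uniformly for the whole group $G(m,1,n)$, rather than in rank one, is the delicate point.

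Granting the identity, the contravariant form $\beta_1$ should factor through the decomposition $M_1(\tau) = Q \otimes \operatorname{Fr}^* M_0(\tau)$ as a tensor product of a perfect pairing on $Q$ (the $c = 0$, $\hbar = 1$ situation, whose irreducible quotient is all of $Q = A/(x_1^p, \dots, x_n^p)$) with the Frobenius twist of the $\hbar = 0$ form $\beta_0$. Hence its radical is $J_1(\tau) = Q \otimes \operatorname{Fr}^* J_0(\tau)$, yielding the structural isomorphism $L_1(\tau) \cong Q \otimes \operatorname{Fr}^* L_0(\tau)$ of graded $G$-representations. Applying the previous proposition, which gives $[L_0(\tau)] = \sum_{\ul\mu} K'_{\ul\mu, \ul\lambda}(t)[S_{\ul\mu}]$, the Frobenius twist multiplies all degrees by $p$ and permutes the constituents $[S_{\ul\mu}]$ (preserving their dimensions), and tensoring with $[Q]$ produces the asserted $G$-equivariant series. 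For the plain Hilbert series this is clean: $[Q]$ contributes $(1+t+\cdots+t^{p-1})^n = (1-t^p)^n/(1-t)^n$, while the twist sends $\dim(\tau)\,H_{\ul\lambda}(t^m)/(1-t)^n$ to $\dim(\tau)\,H_{\ul\lambda}(t^{mp})/(1-t^p)^n$, and multiplying the two gives $\dim(\tau)\,H_{\ul\lambda}(t^{mp})/(1-t)^n$, as stated.
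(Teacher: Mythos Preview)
Your central identity $D_{y_i}^p = \operatorname{id}_Q \otimes T_{y_i}^{(p)}$ is false, and this is a genuine gap rather than a detail to be filled in. Already for $G = \bZ/2$ acting on $K[x]$ by $s \colon x \mapsto -x$, with $p = 3$ and $\tau$ trivial, one has $D(x^n) = (n - 2c)x^{n-1}$ for $n$ odd and $D(x^n) = nx^{n-1}$ for $n$ even, so
\[
D^3(x^3) = (3-2c)\cdot 2 \cdot (1-2c) \equiv 2c - c^2 \pmod 3,
\]
whereas your $T^{(3)}$ applied to $x^3$ (i.e., the $\hbar=0$ Dunkl operator on the new variable $x^3$, evaluated at degree $1$) gives $-2c \equiv c$. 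These disagree for generic $c$. The obstruction is exactly what Jacobson's formula predicts: in $(\partial + T)^p$ the iterated commutators $[\partial,[\partial,\ldots,[\partial,T]\ldots]]$ do not vanish and do not assemble into the $\hbar=0$ operator in $x^p$; they produce genuinely new polynomial expressions in $c$. What is true is that $D_{y_i}^p$ lies in the $p$-center of $\bH_{1,c}$, but identifying how it acts on $L_c(\tau)$ is precisely the hard part and cannot be read off from a naive Frobenius twist of the $\hbar=0$ theory.

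The paper's argument is of a completely different nature and does not attempt any such operator identity. It uses the Azumaya property of the Cherednik algebra over the (smooth) Calogero--Moser space, established in \cite{bfg} and \cite{calogeromoser}, to conclude that every $L(S_{\ul\lambda})$ is isomorphic as an ungraded $G$-module to $Q \otimes K[G]$; a degeneration $c \to 0$ to the Weyl algebra then upgrades this to the graded statement that $[Q]$ divides the $G$-equivariant Hilbert series. The remaining factor is pinned down via the baby Verma multiplicity formula \eqref{eqn:vermamult} and Gordon's machinery. So the structural isomorphism $L_1 \cong Q \otimes (\text{something})$ that you are after is obtained geometrically, not by any $p$th-power Dunkl computation, and the ``something'' is not literally $\operatorname{Fr}^* L_0$ in the elementary sense you propose.
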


\begin{proof}
We define baby Verma modules $M^\circ(S_{\ul{\lambda}})$ as in \cite[\S 4]{gordon}, but instead of using the ideal generated by the positive degree invariants of $G$, we use the ideal generated by the $p$th powers of these invariants. The result follows formally from \cite[\S 6.4]{gordon} once we show that 
\begin{align} \label{eqn:vermamult} 
\sum_j [M^\circ(S_{\ul{\lambda}})[j] : L(S_{\ul{\lambda}})] t^{-j} = t^{-n({\ul{\lambda}})p} f_{{\ul{\lambda}}}(t^p).
\end{align}
Ignoring the grading, one has a $G$-equivariant isomorphism
\begin{align} \label{eqn:eqvtstructure} 
Q \otimes K[G] \cong L(S_{\ul{\lambda}})
\end{align}
for all ${\ul{\lambda}}$ because of the existence of an Azumaya algebra on the corresponding Calogero--Moser space (see \cite[Remark 1.2.3]{bfg} for the case $m=1$) and the rigidity of $G$-modules (see \cite[Proof of Theorem 1.7]{calogeromoser} for details).

Let $L(S_{\ul{\lambda}})'$ be the limit of $L(S_{\ul{\lambda}})$ as $c \to 0$. Then $L(S_{\ul{\lambda}})'$ is a $G$-equivariant module over the Weyl algebra, hence is of the form $Q \otimes V$ for some graded $G$-representation $V$ (forgetting the grading, $V$ is the regular representation), and this identification respects the grading and $G$-structure. Hence the $G$-equivariant Hilbert series of $L(S_{\ul{\lambda}})$ is divisible by that of $Q$, which means that \eqref{eqn:vermamult} holds up to a power of $t$ using \eqref{eqn:eqvtstructure} and \cite[\S 5.6]{gordon}. So the desired formula for the usual Hilbert series holds up to a power of $t$. It is correct as stated because the coefficients of $t^i$ agree for $i \le 0$.
\end{proof}

\begin{remark}
For the general class of complex reflection groups $G(m,r,n)$, the Calogero--Moser space is singular in all cases not previously considered, so the techniques used do not apply. However, the Hilbert series of $L_c({\rm triv})$ is given by \cite[Proposition 3.1]{griffeth} in characteristic 0 and $\hbar=0$:
\[
\frac{(1-t^m)(1-t^{2m}) \cdots (1-t^{(n-1)m}) (1-t^{nm/d})}{(1-t)^n}.
\]
The same proof works assuming that $p$ does not divide $m^n n!/d$ (see also Theorem~\ref{thm:paramdegeneration}).
\end{remark}

\section{Degenerating $G(m,r,n)$ to $G(r,r,n)$} \label{sec:degeneration}

Let $G = G(m,r,n)$ and $G' = G(r,r,n)$. Write $q = m/r$. We have a surjection $G \to G'$ by raising each entry of the matrix to the $q$th power. In this section, we set $\hbar = 0$ and take the parameters $c_s$ to be generic. Consider the Cherednik algebra $\bH(G)$ in characteristic $p$ where $p$ does not divide $m$. Given an irreducible representation $\tau'$ of $G'$, we can think of it as an irreducible representation $\tau$ of $G$ via the surjection above. In this section, we will reduce the problem of calculating the character of $L_c(\tau)$ (considered as an irreducible representation of $\bH(G)$) to the problem of calculating the character of $L_c(\tau')$ (considered as an irreducible representation of $\bH(G')$).

Fix a primitive $m$th root of unity $\xi$. We have two types of reflections: 
\begin{compactitem}[$\bullet$]
\item $s_{ij}^k$ ($0 \le k < m$ and $1 \le i < j \le n$): $x_i \mapsto \xi^k x_j$, $x_j \mapsto \xi^{-k} x_i$, and $x_\ell \mapsto x_\ell$ for $\ell \notin \{i,j\}$, 
\item $t^k_i$ ($1 \le k < q$ and $1 \le i \le n$): $x_i \mapsto \xi^{rk} x_i$ and $x_\ell \mapsto x_\ell$ for $\ell \ne i$. 
\end{compactitem}
It is also useful to define $s_{ij}^k$ when $i > j$ in the same way as above, in which case we note that $s_{ij}^k = s_{ji}^{-k}$. When $n>2$ or when $n=2$ and $r$ is odd, the conjugacy classes of $G$ are
\[
\{ s_{ij}^k \mid 0 \le k < m,\ 1 \le i < j \le n \}, \quad \{ t^1_i \mid 1 \le i \le n \}, \quad \dots, \quad \{ t^{q-1}_1 \mid 1 \le i \le n \}.
\]
Let $c_0, \dots, c_{q-1}$ be the parameters of $\bH(G)$ for the above conjugacy classes. In the case $n=2$ and $r$ even, the first set above splits into two conjugacy classes depending on if $k$ is even or odd. In this case, write $c_0^+, c_0^-, c_1, \dots, c_{q-1}$ for the conjugacy classes. To simplify notation, we will write $c_0^{(k)}$ below. This means $c_0$ in the case $n>2$ or $n=2$ and $r$ odd, and when $n=2$ and $r$ is even, it means $c_0^+$ when $k$ is even and $c_0^-$ when $k$ is odd.

Let $\tau'$ be a representation of $G'$, which we extend to a representation $\tau$ of $G$ via the surjection $G \to G'$. Let $M = M(\tau)$ be the Verma module for $\bH(G)$. Let $J^1$ be the kernel of the contravariant form on $M(\tau)$ when viewed as a representation of $\bH(G')$, and let $h^1(t)$ be the Hilbert series of $M/J^1$. Also, let $A = \mathrm{Sym}(\fh^*)$ and define $J^q$ to be the $A$-submodule of $M$ generated by $J^1$ after substituting $x_i^q$ for $x_i$.

Let $D_1, \dots, D_n$ be the Dunkl operators for $\bH(G)$ and $D'_1, \dots, D'_n$ be the Dunkl operators for $\bH(G')$. For the reflection $s = s_{ij}^k$ with $i < j$, we pick $\alpha_s = x_i - \xi^k x_j$ so that $\langle y_i, \alpha_s \rangle = 1$.

\begin{lemma} \label{lem:dunkldeg}
Let $f(x_1, \dots, x_n) = f'(x_1^q, \dots, x_n^q)$ and pick $v \in \tau$. Set $g = D'_i(f' \otimes v)$. Then
\[
D_i(f \otimes v) = qx_i^{q-1} g(x^q).
\]
\end{lemma}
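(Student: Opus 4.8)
The plan is to expand both $D_i(f\otimes v)$ and $qx_i^{q-1}g(x^q)$ term by term using the Dunkl operator formula and to match them, the only genuine content being a cyclotomic cancellation. Since $\hbar=0$, the operator $D_i$ is a sum over those reflections $s\in\cS$ of $G$ with $(y_i,\alpha_s)\ne 0$; among the reflections of $G=G(m,r,n)$ these are exactly the $s_{il}^k$ with $l\ne i$ (for $0\le k<m$) and the diagonal reflections $t_i^k$ (for $1\le k<q$). First I would dispose of the $t_i^k$ terms: $t_i^k$ scales $x_i$ by $\xi^{rk}$, while $f(x)=f'(x^q)$ sees $x_i$ only through $x_i^q$, and $(\xi^{rk})^q=\xi^{mk}=1$; hence $t_i^k.f=f$, so $(1-t_i^k).f=0$ and these terms drop out entirely.

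Next, for each $l\ne i$ and each $k$ I would take $\alpha_{s_{il}^k}=x_i-\xi^k x_l$, so that $(y_i,\alpha_s)=1$. Substituting $x_i\mapsto\xi^k x_l$, $x_l\mapsto\xi^{-k}x_i$ into $f(x)=f'(x^q)$ replaces $x_i^q\mapsto\zeta^k x_l^q$ and $x_l^q\mapsto\zeta^{-k}x_i^q$, where $\zeta=\xi^q$ is a primitive $r$th root of unity; this is precisely the action of the $G'$-reflection $\sigma:=\sigma_{il}^{[k\bmod r]}$ on $f'$, evaluated at $x^q$. Thus $(1-s_{il}^k).f=((1-\sigma).f')(x^q)$, and $s_{il}^k.v=\sigma.v$ since $\sigma$ is the image of $s_{il}^k$ under $G\to G'$ and $\tau$ is pulled back from $\tau'$. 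Writing $(1-\sigma).f'=(x_i-\zeta^k x_l)\,h_{il,[k]}$ and dividing $((1-\sigma).f')(x^q)=(x_i^q-\xi^{kq}x_l^q)\,h_{il,[k]}(x^q)$ by $\alpha_s=x_i-\xi^k x_l$, the factorization $a^q-b^q=(a-b)\sum_{t=0}^{q-1}a^{q-1-t}b^t$ with $a=x_i$, $b=\xi^k x_l$ gives $\tfrac{1}{\alpha_s}(1-s_{il}^k).f=\big(\sum_{t=0}^{q-1}\xi^{kt}x_i^{q-1-t}x_l^t\big)h_{il,[k]}(x^q)$. On the $G'$ side, $\tfrac{1}{x_i-\zeta^j x_l}(1-\sigma_{il}^{[j]}).f'=h_{il,j}$ by definition, so $g=-\sum_{l,j}c'\,h_{il,j}\otimes\sigma_{il}^{[j]}.v$ and the target $qx_i^{q-1}g(x^q)$ collects precisely the $t=0$ contributions.

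The heart of the argument is to group the $G$-sum over $0\le k<m$ by the residue $j=k\bmod r$ and to check that every term with $t\ge 1$ cancels. For fixed $j,l$ the reflections $s_{il}^{\,j+rs}$, $s=0,\dots,q-1$, share the same $h_{il,j}$ and $\sigma_{il}^{[j]}$ and (for $n>2$) lie in a single conjugacy class, so $c_{s_{il}^k}$ equals one constant $c_0$ throughout; the coefficient of $x_i^{q-1-t}x_l^t$ is then $c_0\,\xi^{jt}\sum_{s=0}^{q-1}(\xi^{rt})^s$. Since $\omega:=\xi^r$ is a primitive $q$th root of unity, $\sum_{s=0}^{q-1}\omega^{st}=0$ for $1\le t\le q-1$ and $=q$ for $t=0$, so all cross terms vanish and only $t=0$ survives, leaving $q\,c_0\,x_i^{q-1}h_{il,j}(x^q)$ per term. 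Matching against $qx_i^{q-1}g(x^q)$ forces the natural parameter identification $c'=c_0$, and summing over $l,j$ yields the claim.

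I expect the cyclotomic cancellation to be the main obstacle; everything else is bookkeeping. One must also treat the exceptional case $n=2$ with $r$ even: there the class of $s_{il}^k$ splits by the parity of $k$ into parameters $c_0^+,c_0^-$, but since $r$ is even all $k$ in a fixed residue class mod $r$ have the same parity, so $c_{s_{il}^k}$ is still constant on each residue class and the cancellation goes through verbatim, with $c'$ equal to $c_0^+$ or $c_0^-$ according to the parity of $j$, matching the two reflection classes of the dihedral group $G(r,r,2)$.
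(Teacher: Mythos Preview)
Your proof is correct and rests on the same core idea as the paper's---the cyclotomic cancellation $\sum_{s=0}^{q-1}\omega^{st}=0$ for $1\le t\le q-1$ after grouping the $G$-reflections $s_{il}^k$ by the residue class $k\bmod r$. The organization, however, is genuinely different. The paper reduces by linearity to a monomial $f=x_1^{qd_1}\cdots x_n^{qd_n}$, expands the divided difference $\frac{(1-s_{ij}^k)f}{x_i-\xi^k x_j}$ explicitly, and then matches term by term against the analogous expansion for $D'_i$, with a case split according to whether $d_i\ge d_j$ (and the symmetric cases). You instead work with an arbitrary $f'$, use the abstract factorization $(1-\sigma)f'=\alpha_\sigma\,h_{il,[k]}$, and apply the identity $a^q-b^q=(a-b)\sum_{t=0}^{q-1}a^{q-1-t}b^t$ once; this sidesteps both the monomial reduction and the exponent-inequality case analysis. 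Your route is shorter and makes the role of the cancellation more transparent, and it also makes explicit the parameter identification $c'=c_0$ (respectively $c'^{\pm}=c_0^{\pm}$ when $n=2$ and $r$ is even), which the paper leaves implicit in its setup. Both approaches handle the diagonal reflections $t_i^k$ identically.
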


\begin{proof}
By linearity, we can assume that $f$ is a monomial of the form $x_1^{qd_1} \cdots x_n^{qd_n}$ (so $f' = x_1^{d_1} \cdots x_n^{d_n}$). Then $f$ is invariant under $t^k_j$ for $k=1,\dots,q-1$ and $j=1,\dots,n$, and hence are killed by these summands of the $D_i$. Now fix $j$ with $i<j$ and consider the sum 
\[
\sum_{k=0}^{m-1} c_0^{(k)} \frac{(1-s^k_{ij})x_1^{qd_1} \cdots x_n^{qd_n}}{x_i - \xi^{k} x_j} \otimes s_{ij}^k v,
\]
which is a summand of the expression for $D_i(f \otimes v)$. By our assumption that $\tau$ is a representation of $G'$, we have $s^k_{ij} v = s^{qk}_{ij} v$, and we may write instead $s^{qk}_{ij} v$ (where the superscript is understood modulo $m$). If $d_i \ge d_j$, then this sum becomes 
\begin{align} \label{eqn:degn1}
\sum_{k=0}^{m-1} c_0^{(k)} (\prod_{\ell \ne i,j} x_\ell^{qd_\ell} ) (\sum_{\ell=0}^{N} x_i^{qd_i-1-\ell m} x_j^{qd_j + \ell m}) \otimes s_{ij}^{qk} v,
\end{align}
where $N = \lfloor (q(d_i-d_j)-1)/m \rfloor$. On the other hand, consider the sum
\begin{align} \label{eqn:degn2}
g = \sum_{k=0}^{r-1} c_0^{(k)} (\prod_{\ell \ne i,j} x_\ell^{d_\ell}) (\sum_{\ell = 0}^{N} x_i^{d_i-1-\ell r} x_j^{d_j + \ell r}) \otimes s^{qk}_{ij} v,
\end{align}
which is a summand of the expression for $D'_i(f' \otimes v)$. If we apply the substitution $x_\ell \mapsto x_\ell^q$ to \eqref{eqn:degn2} and then multiply by $qx_i^{q-1}$, we get \eqref{eqn:degn1}, which matches up the corresponding summands of $D_i(f \otimes v)$ and $qx_i^{q-1} g(x^q)$. There are three other cases (corresponding to the options $d_i < d_j$ and $i > j$), but they are handled in a similar way, and we omit them.
\end{proof}

\begin{theorem} \label{thm:paramdegeneration} 
$M/J^q$ is an irreducible representation of $\bH(G)$.
\end{theorem}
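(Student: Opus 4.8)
The plan is to prove the theorem in three stages: first that $J^q$ is a graded $\bH(G)$-submodule of $M$, then that it is contained in the maximal proper submodule, and finally that this containment is an equality.

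\textbf{Stage 1.} I would first check that $J^q$ is an $\bH(G)$-submodule. It is an $A$-submodule by construction, and it is $G$-stable because the substitution $\psi\colon f\otimes v\mapsto f(x_1^q,\dots,x_n^q)\otimes v$ intertwines the two group actions: a diagonal entry $\xi$ of an element of $G$ becomes $\xi^q$, a primitive $r$th root of unity, which is exactly how the image in $G'$ acts, so $g\cdot\psi(\sigma)=\psi(\ol g\cdot\sigma)$ for $g\in G$ with image $\ol g\in G'$; as $J^1$ is $G'$-stable and $A$ is $G$-stable, $J^q=A\cdot\psi(J^1)$ is $G$-stable. The substantive point is closure under the Dunkl operators $D_i$, which I would prove by induction, reducing a general element of $J^q$ to the form $x^a\psi(\sigma)$ with $\sigma\in J^1$ and inducting on $|a|$. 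The base case is Lemma~\ref{lem:dunkldeg}: $D_i(\psi(\sigma))=q\,x_i^{q-1}\,\psi(D'_i\sigma)$, and since $J^1$ is $D'_i$-stable we have $D'_i\sigma\in J^1$, so the right-hand side lies in $A\cdot\psi(J^1)=J^q$. For the inductive step I write $D_i(x_j\,w)=x_j\,D_i(w)+[D_i,x_j]\,w$; by the relations \eqref{eqn:rca-rel} with $\hbar=0$ we have $[D_i,x_j]\in K[G]$, and both $x_j D_i(w)$ (by induction and $A$-stability) and $[D_i,x_j]w$ (by $G$-stability) lie in $J^q$. Finally $J^q$ is proper: since $\beta_c$ is nondegenerate on the degree-zero part $\tau$ of $M$, the degree-zero part of $J^1$ vanishes, so $\psi(J^1)$, and hence $J^q$, live in positive degree.

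\textbf{Stage 2.} By Proposition~\ref{prop:betaform}, the kernel $J^G$ of the contravariant form $\beta_c^{G}$ for $\bH(G)$ is the unique maximal proper graded submodule, so Stage 1 gives $J^q\subseteq J^G$. Thus $\beta_c^{G}$ descends to a form on $N:=M/J^q$, and $N$ is irreducible if and only if $J^q=J^G$, i.e. if and only if the left radical of $\beta_c^{G}$ on $N$ vanishes. To analyze this I would record the structure of $N$: writing $Q=K[x_1,\dots,x_n]/(x_1^q,\dots,x_n^q)$, the decomposition $A=\bigoplus_{0\le a_i<q}x^a\,A^{(q)}$ gives $N\cong Q\otimes_K\psi(L_c(\tau'))$ as graded $G$-modules, where $L_c(\tau')=M/J^1$; in particular $N$ is finite-dimensional. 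A direct computation with the carrying rule $x_i\cdot x_i^{q-1}=\psi(x_i)$ shows that the $A$-socle of $N$ is concentrated in top degree and equals $\big(\prod_i x_i^{q-1}\big)\,\psi\big(\operatorname{soc}_A L_c(\tau')\big)$, where $\operatorname{soc}_A L_c(\tau')$ is exactly the top-degree piece of $L_c(\tau')$. As in the proof of Lemma~\ref{lem:socle}, any nonzero submodule of the finite-dimensional module $N$ meets its socle, so it now suffices to show that no nonzero socle vector lies in the radical of $\beta_c^{G}$.

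\textbf{Stage 3.} Fix a nonzero socle vector $z=\big(\prod_i x_i^{q-1}\big)\psi(w)$ with $w$ a nonzero top-degree element of $L_c(\tau')$. Because $L_c(\tau')$ is irreducible, its contravariant form $\beta_c^{G'}$ is nondegenerate, so there are $g'\in\Sym(\fh)$ and $w'\in\tau'^{*}$ with $\beta_c^{G'}(w,g'\otimes w')\ne0$ (concretely, $\beta_c^{G'}(w,g'\otimes w')$ is the $\tau'$-component of $D'_{g'}(w)$ paired with $w'$). I would then produce an element $g\in\Sym(\fh)$ for $\bH(G)$ and show, using Lemma~\ref{lem:dunkldeg} repeatedly, that $\beta_c^{G}(z,g\otimes w')$ equals a nonzero scalar times $\beta_c^{G'}(w,g'\otimes w')$, whence $\beta_c^{G}(z,-)\ne0$. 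The natural candidate for $g$ combines a factor built from $\prod_i y_i^{q-1}$, whose role is to strip off the prefactor $\prod_i x_i^{q-1}$, with a lift of $g'$, whose role is to reproduce the $G'$-Dunkl computation on $\psi(w)$. Conceptually the target is that $\beta_c^{G}$ on $N\cong Q\otimes\psi(L_c(\tau'))$ factors, up to a triangular change of basis, as a nondegenerate pairing on the Gorenstein algebra $Q$ tensored with $\beta_c^{G'}$.

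\textbf{The main obstacle.} The difficulty is entirely in Stage 3, and it comes from the factor $q\,x_i^{q-1}$ in Lemma~\ref{lem:dunkldeg}. That lemma identifies $D_i$ with $D'_i$ only on pure images $\psi(f')$, and the output $q\,x_i^{q-1}\psi(D'_if')$ is no longer such an image, so a naive iteration of the lemma breaks down after one step. Controlling the interaction between the Dunkl operators and the carrying in $A=A^{(q)}\otimes Q$---choosing the operator $g$ so that it simultaneously removes $\prod_i x_i^{q-1}$ and realizes $g'$, while tracking that the accumulated scalar is nonzero in $K$ (here the hypothesis $p\nmid m$, hence $q\ne0$ in $K$, enters)---is the technical core of the proof.
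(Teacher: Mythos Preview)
Your Stage~1 matches the paper's argument essentially verbatim: induction on the degree of the monomial prefactor, base case via Lemma~\ref{lem:dunkldeg}, inductive step via the commutation relation \eqref{eqn:rca-rel} and $G$-stability of $J^q$.

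For irreducibility (your Stages~2--3) the paper takes a different route that sidesteps exactly the obstacle you flag. Rather than computing $\beta_c^{G}$ on the socle of $N$, the paper degenerates the parameter $c_0\to 0$ (resp.\ $c_0^{\pm}\to 0$), so that $\bH(G)$ becomes $G'\ltimes\bH((\bZ/q)^n)$. In this limit one checks directly that $L_c(\mathrm{triv})$ for $\bH((\bZ/q)^n)$ is $A/(x_1^q,\dots,x_n^q)$, and $M/J^q$ is built from $h^1(1)$ copies of it, with lowest-weight generators $f_i=\psi(f'_i)$ as $f'_i$ runs over a basis of $L_c(\tau')$. This yields the basis $\{x_1^{d_1}\cdots x_n^{d_n}\,f_i : 0\le d_j<q\}$ for $M/J^q$, and any proper submodule must be generated by some of the $f_i$. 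Now the paper returns to the original Dunkl operators and applies Lemma~\ref{lem:dunkldeg} \emph{once}: $D_j f_i = q\,x_j^{q-1}\,\psi(D'_j f'_i)$. Since $L_c(\tau')$ is irreducible, $D'_j f'_i\ne 0$ for some $j$ whenever $\deg f'_i>0$, and the basis description shows $q\,x_j^{q-1}\,\psi(D'_j f'_i)\ne 0$ in $M/J^q$. So no positive-degree $f_i$ can lie in a proper submodule, and $M/J^q$ is irreducible.

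The contrast is exactly where you put your finger: your approach must drive a top-degree socle element all the way to degree zero through repeated Dunkl applications, fighting the $q\,x_i^{q-1}$ factors at each step, whereas the paper's degeneration reduces the question to a single Dunkl application on each generator $f_i$. Your Stage~3 is a plausible strategy, but you have not carried out its hard part; the paper shows that this hard part can be bypassed altogether.
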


\begin{proof}
We claim that $J^q$ is an $\bH(G)$-submodule of $M$. Pick $f'(x) \otimes v \in J^1$, and write $f(x) \otimes v = f'(x^q) \otimes v$. The ideal $J^q$ is linearly spanned by elements of the form $x_1^{d_1} \cdots x_n^{d_n}(f \otimes v)$, so to show that $J^q$ is an $\bH(G)$-submodule, it suffices to show that $D_i x_1^{d_1} \cdots x_n^{d_n}(f \otimes v) \in J^q$ for all $i$ and $d_1, \dots, d_n$. We will do this by induction on $d = d_1 + \cdots + d_n$. First suppose $d=0$. Then $D_i(f \otimes v) = qx_i^{q-1} g(x^q)$ where $g = D'_i(f' \otimes v)$ by Lemma~\ref{lem:dunkldeg}. Since $g \in J^1$, this implies that $D_i(f \otimes v) \in J^q$ by the definition of $J^q$. To apply the induction step we use the commutation relation \eqref{eqn:rca-rel}
\[
D_ix_j = x_jD_i - \sum_s c_s (x_j, \alpha_s) (x_i, \alpha^\vee_s) s
\]
and note that $J^q$ is preserved by $G$. This proves the claim, so $J^q$ is an $\bH(G)$-submodule of $M$.

Consider the limit $c_0 \to 0$ (respectively, $c_0^{\pm} \to 0$). Then $\bH(G)$ degenerates to a semidirect product $G' \ltimes \bH((\bZ/q)^n)$ (note that $\bH((\bZ/q)^n)$ has $(q-1)^n$ parameters $c_{i,j}$ for $i=1,\dots,n$ and $1 \le j \le q-1$, and we are considering the case when we have collapsed them to $q-1$ parameters by setting $c_{1,j} = c_{2,j} = \cdots = c_{n,j}$). A direct calculation using Lemma~\ref{lem:socle} shows that $L_c({\rm triv})$ is the quotient of $M_c({\rm triv})$ by the ideal generated by the $q$th powers of the variables. In particular, all irreducible composition factors in $M_c({\rm triv})$ of $\bH((\bZ/q)^n)$ are isomorphic to $L_{c}({\rm triv})$. The Hilbert series of $L_c({\rm triv})$ is $[q]^n$ and it affords the trivial representation for $(\bZ/q)^n$ in lowest degree. So $M/J^q$ is built out of $h^1(1)$ copies of such representations each with lowest degree which is a multiple of $q$. The number of such representations with lowest degree $kq$ is the coefficient of $t^{kq}$ in $h^1(t^q)$. Let the generators be called $f_1, \dots, f_{h^1(1)}$. Then the set
\[
\{x_1^{d_1} \cdots x_n^{d_n} f_i \mid i=1,\dots,h^1(1),\ 0 \le d_j < q \}
\]
forms a basis for $M/J^q$. If there is a proper submodule of $M/J^q$, it is generated by some of the $f_i$.

Write $f_i(x) = f'_i(x^q)$. If $\deg f_i > 0$, we claim that there is some $j$ such that $D_jf_i \ne 0$. We have $D_jf_i(x) = qx_i^{q-1} D^1_j f'_i(x^q)$. But $D^1_j f'_i \ne 0$ for some $j$, and we can write it as a linear combination of the $f_k$. The statement above about the basis for $M/J^q$ implies that $qx_i^{q-1} D^1_j f'_i(x^q) \ne 0$. Thus we see that $M/J^q$ is irreducible as an $\bH(G)$-representation.
\end{proof}

\begin{corollary}
Let $h^1(t)$ be the Hilbert series of $M/J^1$. Then the Hilbert series of $M/J^q$ is $h^1(t^q) \cdot (1+t+\cdots+t^{q-1})^n$.
\end{corollary}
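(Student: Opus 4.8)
The plan is to identify $M/J^q$ as an induced module and then read off its Hilbert series from a free-module decomposition, in direct analogy with Lemma~\ref{lem:powerhilb}. Let $A^{(q)} = K[x_1^q, \dots, x_n^q] \subseteq A$, and recall that $A$ is a free $A^{(q)}$-module with basis $\{x_1^{d_1} \cdots x_n^{d_n} \mid 0 \le d_j < q\}$, whose degrees are enumerated by $(1 + t + \cdots + t^{q-1})^n$. Writing $M = A \otimes_K \tau$, we have $M = A \otimes_{A^{(q)}} M^{(q)}$ where $M^{(q)} = A^{(q)} \otimes_K \tau$.

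First I would describe $J^q$ in these terms. The substitution $x_i \mapsto x_i^q$ carries $J^1 \subset M$ to a graded $A^{(q)}$-submodule $\tilde J \subseteq M^{(q)}$, and by construction $J^q = A \cdot \tilde J$ is the $A$-module it generates. Because $A$ is free, hence flat, over $A^{(q)}$, tensoring the exact sequence $0 \to \tilde J \to M^{(q)} \to M^{(q)}/\tilde J \to 0$ with $A$ over $A^{(q)}$ remains exact and identifies
\[
M/J^q \cong A \otimes_{A^{(q)}} (M^{(q)}/\tilde J).
\]
The substitution $x_i \mapsto x_i^q$ multiplies degrees by $q$, so the Hilbert series of $M^{(q)}/\tilde J$ is $h^1(t^q)$. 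Tensoring with the free $A^{(q)}$-module $A$ multiplies the Hilbert series by the degree generating function of a basis, namely $(1 + t + \cdots + t^{q-1})^n$, giving $h^1(t^q)(1 + t + \cdots + t^{q-1})^n$ as claimed.

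Alternatively, the conclusion can be read off directly from the proof of Theorem~\ref{thm:paramdegeneration}: there we exhibited the basis $\{x_1^{d_1} \cdots x_n^{d_n} f_i \mid 0 \le d_j < q\}$ of $M/J^q$, where the generators $f_i = f'_i(x^q)$ are indexed by a basis $\{f'_i\}$ of $M/J^1$ with $\deg f_i = q \deg f'_i$. Summing degrees then factors as $\bigl(\sum_i t^{\deg f_i}\bigr)(1 + t + \cdots + t^{q-1})^n = h^1(t^q)(1 + t + \cdots + t^{q-1})^n$.

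I do not expect a genuine obstacle here, since the essential combinatorial work was already done in proving the theorem. The one point requiring care — and the part supplied by the theorem rather than by formal nonsense — is that these $f_i$ are in bijection, with degrees scaled by $q$, with a basis of $M/J^1$; equivalently, that passing from $J^1$ to $J^q$ introduces no relations beyond those forced by the free $A^{(q)}$-module structure. This is exactly the flatness exploited above, so I would present the flatness argument as the clean self-contained proof and treat the basis description as a sanity check.
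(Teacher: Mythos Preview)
Your proposal is correct and is exactly the ``minor adaptation of Lemma~\ref{lem:powerhilb}'' that the paper invokes: the freeness of $A$ over $A^{(q)}$ gives $M/J^q \cong A \otimes_{A^{(q)}} (M^{(q)}/\tilde J)$, and the Hilbert series factors accordingly. Your alternative via the explicit basis from Theorem~\ref{thm:paramdegeneration} also works, but as you note, the flatness argument is self-contained and does not rely on the degeneration in that proof.
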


\begin{proof}
This follows by a minor adaptation of Lemma~\ref{lem:powerhilb}.
\end{proof}

\section{The case $\tau = 1$} \label{sec:tau-triv}

For this section, we focus on the case $\tau = 1$. So the Verma module $M(\tau)$ is the polynomial ring $A$.

\subsection{Symmetric groups}

\begin{proposition} \label{prop:dunklspecht} 
If $n \equiv i \pmod p$ where $0 \le i \le p-1$, then the Dunkl operators for $G(m,1,n)$ kill the generators of $I^{(m)}_i$.
\end{proposition}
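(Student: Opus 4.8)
The plan is to first pass to the symmetric group and then compute divided differences explicitly. I would invoke Lemma~\ref{lem:dunkldeg} with $G=G(m,1,n)$, $G'=G(1,1,n)=\Sigma_n$ and $q=m$: a generator of $I_i^{(m)}$ has the form $f(x)=f'(x_1^m,\dots,x_n^m)$ where $f'$ is a Garnir polynomial generating $I_i^{(1)}$, and the lemma gives $D_\ell(f\otimes 1)=m\,x_\ell^{m-1}(D'_\ell f')(x^m)$. Since $\ch K\nmid m$, this is zero exactly when $D'_\ell f'=0$, so it suffices to treat $\Sigma_n$. I would also record a constraint on the shape. Discarding the degenerate range $i\ge n-1$ (where $I_i^{(m)}=0$), the hypothesis $p\mid(n-i)$ with $0\le i\le p-1$ forces $n-i\ge p$, hence $n-i-1\ge p-1\ge i$, i.e.\ $2i<n$. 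Thus only two shapes occur: $(n-i-1,i+1)$ when $n\ge 2i+2$, and $(i,i,1)$ when $n=2i+1$ (which additionally forces $i=p-1$, so $n-i=p$).

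\textbf{Reduction to a single tableau, and the two-row case.} The Dunkl operators satisfy $gD'_\ell g^{-1}=D'_{g\ell}$, so the total map $\Phi\colon A_{n(\lambda)}\to A_{n(\lambda)-1}\otimes\fh$, $\Phi(f)=\sum_\ell (D'_\ell f)\otimes y_\ell$, is $\Sigma_n$-equivariant. Since $\Sigma_n$ permutes the tableaux of shape $\lambda$ transitively with $f_{\sigma T}=\sigma f_T$, the Garnir polynomials form one orbit spanning $S_\lambda$, so $\Phi|_{S_\lambda}=0$ as soon as $\Phi(f_{T_0})=0$ for a single fixed $T_0$; it then remains to show $D'_\ell f_{T_0}=0$ for every $\ell$. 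Here (with $\hbar=0$, $\tau=1$) $D'_\ell f=-c\sum_{j\ne\ell}(f-s_{\ell j}f)/(x_\ell-x_j)$, a sum of divided differences. For $\lambda=(n-i-1,i+1)$ write $f_{T_0}=\prod_{d=1}^{i+1}(x_{t_d}-x_{b_d})$ over the $i+1$ height-two columns, with $n-2i-2$ remaining ``unmatched'' variables. I would split $D'_\ell f_{T_0}$ by the role of $\ell$: if $\ell$ is unmatched, the contributions from the top and bottom of each column cancel in pairs ($-f_{T_0}/(x_{t_d}-x_{b_d})$ against $+f_{T_0}/(x_{t_d}-x_{b_d})$) and unmatched $j$ contribute nothing, giving $D'_\ell f_{T_0}=0$ unconditionally. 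If $\ell=t_e$, set $A=x_{t_e}-x_{b_e}$; the partner $j=b_e$ contributes $2f_{T_0}/A$, each unmatched $j$ contributes $f_{T_0}/A$, and the two cross-terms from a column $d\ne e$ combine via the identity $(x_{b_e}-x_{b_d})+(x_{t_d}-x_{b_e})=x_{t_d}-x_{b_d}$ into a single $f_{T_0}/A$. Summing the $2+i+(n-2i-2)$ units yields
\[
D'_\ell f_{T_0}=-c\,(n-i)\,f_{T_0}/A,
\]
which vanishes since $p\mid(n-i)$; the case $\ell=b_e$ is symmetric. The crux is this telescoping, which extracts the scalar $n-i$.

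\textbf{The remaining case and the main obstacle.} When $n=2i+1$ the shape is $(i,i,1)$ and $f_{T_0}$ acquires one height-three column, whose factor is the full $3\times 3$ Vandermonde, the other columns having height two. The same column-by-column expansion applies, but now the within-column terms arise from antisymmetry of the tall Vandermonde and the cross-terms between the height-three and the height-two columns are more elaborate. I expect the scalar to again be a multiple of $n-i=p$; this is already visible for $p=2$, $\lambda=(1,1,1)$, where $D'_1$ of the three-variable Vandermonde equals $-2c\,(x_2-x_3)(2x_1-x_2-x_3)$, with overall factor $2=p$. Verifying that the height-three contribution organizes into a multiple of $p$ is the main obstacle; I would handle it by the analogue of the computation above, using antisymmetry within the tall column and a telescoping identity for the cross-terms, and then invoke $p\mid(n-i)$ to conclude.
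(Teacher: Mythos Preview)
Your approach is essentially the paper's: split by shape into the two-row case $(n-i-1,i+1)$ and the case $(p-1,p-1,1)$ when $n=2p-1$, then compute the divided differences directly to extract the scalar $n-i$. Two packaging differences are worth noting. First, your reduction to $m=1$ via Lemma~\ref{lem:dunkldeg} is a clean shortcut; the paper instead works directly with $G(m,1,n)$, observing that the reflections $t_i^k$ fix the generators and then summing over the $m$ reflections $s_{ij}^k$ for each pair, which produces the scalar $(n-i)m$ rather than $n-i$. Second, your equivariance argument reducing to a single tableau is exactly what the paper's ``we may as well take $r=e_1$'' encodes. Your two-row telescoping is correct and more explicit than the paper's sketch.

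The only unfinished piece is the $(p-1,p-1,1)$ case, and your expectation is right. Writing the tall column as $\{e_1,e_2,e_3\}$ and the short columns as $\{e_{2d},e_{2d+1}\}$ for $2\le d\le p-1$, the paper checks $\ell=e_1$ and $\ell=e_4$. For $\ell=e_1$, every term (or pair of terms) is a multiple of
\[
g=(2x_{e_1}-x_{e_2}-x_{e_3})(x_{e_2}-x_{e_3})\prod_{d\ge 2}(x_{e_{2d}}-x_{e_{2d+1}}):
\]
the two within-column transpositions $j=e_2,e_3$ together give $2g$, and each short column pair $j=e_{2d},e_{2d+1}$ telescopes to exactly $g$ (the computation is the same cross-term identity you used, now with the quadratic factor $(x_{e_1}-x_{e_2})(x_{e_1}-x_{e_3})$ in place of a linear one). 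The total is $(2+(p-2))g=pg$. For $\ell=e_4$, every term is a multiple of the full tall Vandermonde times $\prod_{d\ge 3}(x_{e_{2d}}-x_{e_{2d+1}})$, again with total coefficient $p$. So the ``main obstacle'' is no harder than the two-row case once you organize the tall-column contributions in pairs.
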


\begin{proof} 
If $i \ne p-1$ or $i = p-1$ and $n > 2p-1$, then $\lambda = (n-i-1,i+1)$. Otherwise, we have $i=p-1$ and $n = 2p-1$, in which case $\lambda = (p-1,p-1,1)$. 
   
In the first case, we have $n > 2p - 1$ and $\lambda = (n-i-1,i+1)$. For a filling $e$ of the Young diagram for $\lambda$ let $\{\{e_1,e_2\}, \{e_3,e_4\}, \ldots, \{e_{2i+1},e_{2i+2}\}\}$ be the first $i + 1$ columns. Let $f(e)$ be the associated Garnir polynomial after doing the substitution $x_i \mapsto x_i^m$. A generating set of $I_i^{(m)}$ is given by $f(e)$ for all $e$. The reflections $t_i^k$ fix the $f(e)$, so those terms in $D_r f(e)$ are $0$.

If $r \notin \{e_i\}$, then the reflections $s_{r,r'}^k$ with $r' \notin \{e_i\}$ fix $f(e)$, so that term of $D_r$ kills $f(e)$. For all $0 \leq j \leq i$, the terms for the reflections $s_{r,e_{2j+1}}^k$ and $s_{r,e_{2j+2}}^k$ will cancel each other. Otherwise, if $r \in \{e_i\}$, we may as well take $r=e_1$ without loss of generality. We let $g=x_{e_1}^{m-1}(x_{e_3}^m - x_{e_4}^m)\cdots(x_{e_{2i + 1} }^m- x_{e_{2i + 2}}^m)$.  All the reflections generate terms that are multiples of $g$. The total comes to $(n - i)mg$, which is $0$ since $n \equiv i \pmod p$. 

Now consider the case $n = 2p - 1$ and $\lambda = (p-1,p-1,1)$. For a filling $e$ of the Young diagram for $\lambda$ let $\{\{e_1,e_2,e_3\}, \{e_4,e_5\}, \ldots, \{e_{2p-2},e_{2p-1}\}\}$ be the entries in the columns of the diagram and let $f(e)$ be the associated Garnir polynomial after doing the substitution $x_i \mapsto x_i^m$. A generating set for $I_i^{(m)}$ is given by $f(e)$ for all $e$.

To show that $D_i f(e) = 0$, it is enough to consider the case $i=e_1$ and $i=e_4$ by symmetry of the $e_j$. First consider $D_{e_1}$. This ends up being similar to the previous case: we let $g=x_{e_1}^{m-1}(2x_{e_1}^m - x_{e_2}^m - x_{e_3}^m)(x_{e_2}^m - x_{e_3}^m)(x_{e_4}^m - x_{e_5}^m)\cdots(x_{e_{2p-2} }^m- x_{e_{2p-1}}^m)$. All the reflections generate terms that are multiples of $g$ (or add up to a multiple of $g$ when considered in pairs); the terms sum to $mpg \equiv 0 \pmod p$. For the case with $D_{e_4}$, the sum of the terms produced by all the reflections is
\[
pmx_{e_1}^{m-1}(x_{e_1}^m - x_{e_2}^m)(x_{e_1}^m - x_{e_3}^m)(x_{e_2}^m - x_{e_3}^m)(x_{e_6}^m - x_{e_7}^m)\cdots(x_{e_{2p-2} }^m- x_{e_{2p-1}}^m) \equiv 0 \pmod p.
\]
Therefore, all the generators of the ideal are killed by the Dunkl operators.
\end{proof}

We therefore know that $I^{(m)}_i \subset J$. We conjecture that the ideal $J$ is generated by the generators of $I^{(m)}_i$ and a regular sequence on $A/I^{(m)}_i$. In the case where $m=1$, calculations indicate that $x_1+\dots+x_n$ is one of the elements of the regular sequence. When $m=1$ and $n \equiv 1 \pmod p$, computer calculations suggest that the regular sequence is $\{x_1+\dots+x_n, x_{n-1}^p-x_{n-1}x_n^{p-1}+x_n^p\}$.

\subsection{$G(m,m,n)$ for $m>1$, $p$ divides $n$} \label{sec:Gmmn-pdividesn}

Let $G = G(m,m,n)$ in characteristic $p$ where $p$ divides $n$ but not $m$.  

\begin{proposition} \label{prop:Gmmn-pdivides}
The ideal $J$ is generated by the differences of the $m$th powers of the $x_i$ and the squarefree monomials of degree $p$.
\end{proposition}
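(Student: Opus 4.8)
Let $J'\subseteq A$ denote the ideal generated by the $m$th power differences $x_i^m-x_j^m$ together with all squarefree monomials $x_{i_1}\cdots x_{i_p}$ of degree $p$; the goal is to show $J'=J$. The plan has two parts. First I would show $J'\subseteq J$ by checking that $J'$ is a proper $\bH(G)$-submodule of $M(\tau)=A$, so that it sits inside the maximal proper submodule $J$. Then I would show $A/J'$ is irreducible by verifying the three hypotheses of Lemma~\ref{lem:socle}, which forces $J'=J$. Throughout I use that $G(m,m,n)$ has only the reflections $s_{ij}^k$, so its Dunkl operators contain no $t_i^k$-terms.

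For the first part it suffices to check that each generator of $J'$ is killed by every $D_r$; closure of $J'$ under $\bH(G)$ then follows from the commutation relations \eqref{eqn:rca-rel} together with $G$-invariance of $J'$, exactly by the induction in the proof of Theorem~\ref{thm:paramdegeneration}. Since $p\mid n$ we have $n\equiv 0\pmod p$, so the $m$th power differences are precisely the generators of $I_0^{(m)}$, which are killed by the Dunkl operators by Proposition~\ref{prop:dunklspecht}; the $t_i^k$-terms that appear there already vanish, so their absence for $G(m,m,n)$ changes nothing. For a squarefree monomial $M=x_{i_1}\cdots x_{i_p}$ I would compute $D_rM$ directly. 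Writing $M'=M/x_r$: the reflections $s_{rb}^k$ with $r\in\supp M$ and $b\notin\supp M$ each contribute $mM'$ after summing over $k$, giving the total $m(n-p)M'$, which vanishes because $p\mid(n-p)$; the reflections with $r\notin\supp M$ and $b\in\supp M$ contribute a multiple of $\sum_{k=0}^{m-1}\xi^{\pm k}=0$ (using $m>1$); and the reflections with both indices in $\supp M$ fix $M$. Hence $D_rM=0$ for all $r$, and $J'$ is a proper $\bH(G)$-submodule, so $J'\subseteq J$.

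For the second part, set $N=A/J'$ and $R=A/(x_i^m-x_j^m)$, and let $t$ denote the common image of the $x_i^m$ in $R$. Collecting $m$th powers into $t$ gives the monomial basis $\{\,t^\ell\prod_{i\in S}x_i^{b_i}: S\subseteq\{1,\dots,n\},\ 0<b_i<m,\ \ell\ge0\,\}$ of $R$. The key observation is that multiplying such a basis element by a squarefree degree-$p$ monomial and reducing again produces a single basis element, so the image of the squarefree ideal in $R$ is spanned by basis monomials. A short argument (spreading $t$-powers onto fresh variables to expose a support of size $\ge p$) then shows that $t^\ell\prod_{i\in S}x_i^{b_i}$ lies in this ideal exactly when $|S|\ge p$ or $\ell\ge p-|S|$. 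Thus $N$ has the explicit basis $\{\,t^\ell\prod_{i\in S}x_i^{b_i}: |S|=s\le p-1,\ 0<b_i<m,\ 0\le\ell\le p-1-s\,\}$, and every nonzero monomial has degree $m\ell+\sum b_i\le m(p-1)-s\le m(p-1)$, with equality only for $t^{p-1}$. In particular $N$ is finite-dimensional with one-dimensional top degree $N_{m(p-1)}=\langle t^{p-1}\rangle$.

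Since the squarefree ideal is monomial in $R$, each $x_r$ sends basis monomials of $N$ to basis monomials or to $0$, and distinct basis monomials have distinct nonzero images; hence $\ker(x_r\colon N\to N)$ is spanned by the basis monomials it kills, and the socle $\bigcap_r\ker(x_r)$ is spanned by the basis monomials annihilated by every $x_r$. A direct check on the basis shows this forces $S=\varnothing$ and $\ell=p-1$, so the socle equals $\langle t^{p-1}\rangle$: it is one-dimensional, concentrated in top degree, and (as $t$ is $G$-invariant in $N$) affords the trivial representation, giving hypotheses (1) and (2) of Lemma~\ref{lem:socle}. It remains to verify hypothesis (3), that $\beta_c(t^{p-1},-)\ne0$, equivalently that $t^{p-1}\notin J$, i.e. that $L_c(\tau)$ is nonzero in its top degree $m(p-1)$. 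I expect this to be the main obstacle: one must compute the contravariant pairing of $t^{p-1}$ against a suitable degree-$m(p-1)$ element of the dual Verma module by moving operators across using properties (1)--(2) of Proposition~\ref{prop:betaform}, and then argue that the resulting scalar is a nonzero polynomial in the $c_s$, hence nonzero for generic $c$. Granting this last point, Lemma~\ref{lem:socle} yields $N$ irreducible and therefore $J'=J$.
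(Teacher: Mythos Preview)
Your overall strategy matches the paper's: show $J'\subseteq J$ by checking that the Dunkl operators kill the generators, then invoke Lemma~\ref{lem:socle} on $A/J'$. Your verification of $J'\subseteq J$ is fine, and your socle analysis is correct (if more elaborate than necessary---the paper simply observes that any nonzero monomial in $A/J'$ can be written as $x_{e_1}^{d_1}\cdots x_{e_{p-1}}^{d_{p-1}}$ with $0\le d_i<m$, and then multiplied by $x_{e_1}^{m-d_1}\cdots x_{e_{p-1}}^{m-d_{p-1}}$ to reach $x_n^{(p-1)m}$, so the socle sits in top degree).

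The genuine gap is exactly where you flag it: hypothesis~(3). You leave the nonvanishing of $\beta_c(t^{p-1},-)$ as something to be ``granted,'' but this is the one place where a concrete calculation is required, and it is short. The paper takes $v=x_n^{(p-1)m}$ and computes $D_n(x_n^s)$ directly. Summing $\frac{(1-s_{nr}^k)x_n^s}{x_n-\xi^kx_r}=\sum_{j=0}^{s-1}\xi^{kj}x_n^{s-1-j}x_r^j$ over $k$ leaves only the terms with $m\mid j$; modulo $J'$ each surviving $x_r^{jm}$ becomes $x_n^{jm}$, and after summing over the $n-1$ choices of $r$ one gets a nonzero multiple of $x_n^{s-1}$ (using $n-1\equiv -1\pmod p$ and $p\nmid m$). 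Iterating $D_n$ down from degree $(p-1)m$ to $0$ therefore yields $\beta(x_n^{(p-1)m},x_n^{(p-1)m})$ as a nonzero power of $cm$ times a nonzero integer, which is nonzero for generic $c$. That one-line computation is what your argument is missing; once you insert it, your proof is complete and essentially identical to the paper's.
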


\begin{proof}
Let $J'$ be the ideal generated by the differences of the $m$th powers of the $x_i$ and the squarefree monomials of degree $p$.   The Dunkl operators in this case can be written as:
\[
D_i f= -c \sum_{\substack{r \neq i, \\0 \leq k \leq m-1}} \frac{(1 - s_{i,r}^k )f}{x_i - \xi^{-k}x_r}.
\]
Let $f = x_{e_1} \cdots x_{e_p}$ be a squarefree monomial of degree $p$. We will show that $D_i f = 0$. If $i \notin \{e_1, \dots, e_p\}$, then the reflections $s_{i,j}^k$ fix $f$ whenever $j \notin \{e_1, \dots, e_p\}$. The contribution from $s_{i,e_j}^k$ is $-\xi^k (f/x_{e_j})$, so summing over all $k$ gives $0$. Now consider the case $i \in \{e_1, \dots, e_p\}$ (and we may as well assume $i = e_1$). Reflections of the form $s_{e_1,e_j}^k$ fix $f$ so those terms from the Dunkl operator do not contribute. Reflections of the form $s_{e_1,r}^k$ for $r \notin \{e_1, \dots, e_p\}$ produce $x_{e_2}\cdots x_{e_p}$. There are $n - p$ such $r$, so the sum of these terms is $m(n-p)x_{e_2}\cdots x_{e_i}$, which is $0$ since $n \equiv 0 \pmod p$. Similar reasoning shows that the differences of the $m$th powers of the $x_i$ are also killed by the Dunkl operators. Hence $J' \subseteq J$.

The highest degree existing in $A/J'$ is $(p-1)m$. A basis for this top degree is one element: $x_n^{(p-1)m}$. Since $D_n(x_n^s) = -c(n-1)mx_n^{s-1} = cmx_n^{s-1}$, we know that $\beta(x_n^{(p-1)m}, x_n^{(p-1)m}) = (cm)^{(p-1)m}$. Any nonzero monomial can be expressed (non-uniquely), modulo $J'$, as $f = x_{e_1}^{d_1}\cdots x_{e_{p-1}}^{d_{p-1}}$ where $1 \leq e_1 < \cdots < e_{p-1} \leq n$ and $0 \leq d_i < m$ for all $i$. Multiplying this monomial by $x_{e_1}^{m-d_1} \cdots x_{e_{p-1}}^{m-d_{p-1}}$ gives $x_{e_1}^m \cdots x_{e_{p-1}}^m \equiv x_n^{(p-1)m} \pmod J$, so the socle of $A/J'$ is in top degree. We conclude that $J=J'$ by Lemma~\ref{lem:socle}.
\end{proof}

\subsection{$G(m,m,n)$ for $m>1$, $p$ does not divide $n$} \label{sec:Gmmn-pnotdividen}

Let $G = G(m,m,n)$ in characteristic $p$ where $p$ does not divide $n$ or $m$. Write $n \equiv i \pmod p$ where $0 < i < p$. Let $A = K[x_1, \dots, x_n]$. Set $e_1(x), \dots, e_n(x)$ to be the elementary symmetric functions in $x_1, \dots, x_n$. Let $J' \subset A$ be the ideal generated by $e_1(x^m), \dots, e_n(x^m)$ and all squarefree monomials of degree $i$.

\begin{lemma}
$J' \subseteq J$.
\end{lemma}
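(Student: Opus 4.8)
The plan is to show that each of the two families of generators of $J'$ lies in $J$. Since $J$ is a $\bH(G)$-submodule and in particular is stable under multiplication by $\Sym(\fh^*) = A$, it is an ideal of $A$, so it suffices to check that the generators themselves belong to $J$. The mechanism I would use is the one already behind Propositions~\ref{prop:dunklspecht} and~\ref{prop:Gmmn-pdivides}: if $f \in A$ is homogeneous of positive degree and $D_i f = 0$ for every $i$, then the $\bH(G)$-submodule generated by $f$ is concentrated in degrees $\ge \deg f$. Indeed, the Dunkl operators supply the action of $\Sym(\fh)$ and annihilate $f$, the group $K[G]$ preserves degree, and $\Sym(\fh^*)$ only raises it, so the submodule misses degree $0$ and is therefore proper; hence it is contained in the unique maximal proper graded submodule $J$ of Proposition~\ref{prop:betaform}. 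Thus the entire problem reduces to verifying $D_i f = 0$ for each generator $f$ and each $i$.

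For the invariant generators $e_1(x^m), \dots, e_n(x^m)$ this is immediate: each reflection $s_{ij}^k$ sends $x_i^m \mapsto x_j^m$ and $x_j^m \mapsto x_i^m$ (as $\xi^m = 1$) and fixes the remaining $x_\ell^m$, so it merely transposes the arguments of the elementary symmetric polynomials. Hence $e_k(x^m)$ is $G$-invariant, $(1-s)e_k(x^m) = 0$ for every reflection $s$, and every summand of $D_i$ vanishes termwise. The substantive case is the squarefree monomials $f = x_{e_1}\cdots x_{e_i}$ of degree $i$, where I would compute directly from
\[
D_j f = -c \sum_{\substack{r \ne j \\ 0 \le k \le m-1}} \frac{(1 - s_{j,r}^k)f}{x_j - \xi^{-k}x_r}.
\]
When $j$ lies outside the support $\{e_1,\dots,e_i\}$, the reflections $s_{j,r}^k$ with $r$ outside the support fix $f$, while for each support index $e_\ell$ the terms $s_{j,e_\ell}^k$ contribute a common multiple of $f/x_{e_\ell}$ weighted by $\xi^{k}$, and $\sum_{k=0}^{m-1}\xi^{k}=0$ kills them. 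When $j = e_1$ lies in the support, the internal reflections $s_{e_1,e_\ell}^k$ fix $f$ outright (the factor $x_{e_1}x_{e_\ell}$ is invariant), and each of the $n-i$ reflections $s_{e_1,r}^k$ with $r$ outside the support contributes exactly $-c\,(f/x_{e_1})$; summing over $k$ and over $r$ gives $-cm(n-i)(f/x_{e_1})$, which vanishes precisely because $n \equiv i \pmod p$.

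I expect the only delicate point to be the bookkeeping of these cancellations in the squarefree case: keeping the root-of-unity sums straight and confirming that the ``outside'' reflections genuinely produce a clean scalar multiple of $f/x_{e_1}$ with no cross terms. This computation is, however, structurally identical to the one in the proof of Proposition~\ref{prop:Gmmn-pdivides} (where $p \mid n$ and the monomials have degree $p$), with the congruence $p \mid (n-i)$ here playing exactly the role that $p \mid n$ played there. Once both families are shown to be annihilated by all Dunkl operators, the reduction of the first paragraph yields $J' \subseteq J$ immediately.
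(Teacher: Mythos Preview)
Your proposal is correct and follows essentially the same approach as the paper: the paper's proof simply states that the squarefree monomials of degree $i$ are killed by the Dunkl operators ``in exactly the same way as in the proof of Proposition~\ref{prop:Gmmn-pdivides}'' and that the $e_k(x^m)$ are killed because they are $G$-invariants. You have spelled out both computations (and the reduction from ``killed by Dunkl operators'' to ``lies in $J$'') in more detail, but the argument is the same.
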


\begin{proof}
One shows that the squarefree monomials of degree $i$ are killed by the Dunkl operators in exactly the same way as in the proof of Proposition~\ref{prop:Gmmn-pdivides}. The action of the Dunkl operators on the elementary symmetric functions in $x_1^m, \dots, x_n^m$ must also be 0 because they are invariants of $G$. 
\end{proof}

For the rest of this section, we do not make any assumptions on the characteristic of $K$ unless otherwise stated.

Let $T_i$ be the ideal in $A$ generated by all squarefree monomials of degree $i$. Then $A/T_i$ is a Cohen--Macaulay algebra of Krull dimension $i-1$. In fact, this algebra has a linear resolution \cite[Theorem 3]{eagonreiner}, and hence is a level algebra. The zero locus of $T_i$ is the set of points $(x_1, \dots, x_n)$ such that at least $n-i+1$ coordinates are equal to 0. This is a union of $\binom{n}{i-1}$ linear spaces of dimension $i-1$, so the degree of this variety is $\binom{n}{i-1}$. Note that $T_i$ is a radical ideal. 

\begin{lemma}
The Hilbert series of $A/T_i$ is
\[
(\sum_{j=0}^{i-1} \binom{n-i+j}{n-i} t^j) \left/ (1-t)^{i-1} \right.
\]
\end{lemma}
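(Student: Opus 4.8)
The plan is to compute the Hilbert series of $A/T_i$ directly by counting a monomial basis, exploiting the fact that $T_i$ is generated by squarefree monomials and hence is a monomial (in fact Stanley--Reisner) ideal. Recall that $T_i$ is the ideal of all squarefree monomials of degree $i$. The monomials \emph{not} in $T_i$ are exactly those whose support has size at most $i-1$; equivalently, a monomial $x_1^{d_1} \cdots x_n^{d_n}$ is a nonzero element of $A/T_i$ if and only if the number of indices $j$ with $d_j > 0$ is at most $i-1$. So the first step is to observe that a $K$-basis for $A/T_i$ is given by all monomials supported on at most $i-1$ of the variables.

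The second step is to count these basis monomials degree by degree. I would organize the count by the support set $S \subseteq \{1,\dots,n\}$: for each subset $S$ with $\#S = s \le i-1$, the monomials supported exactly on $S$ in degree $d$ number $\binom{d-1}{s-1}$ (the positive-exponent compositions of $d$ into $s$ parts), with the convention that $S = \emptyset$ contributes only in degree $0$. Summing over all $S$ of a fixed size and then over sizes, the coefficient of $t^d$ in the Hilbert series is $\sum_{s=0}^{i-1} \binom{n}{s} \binom{d-1}{s-1}$. Rather than sum these binomials by hand, I would recognize that the generating function for monomials supported on a fixed set of size $s$ is $\left(\frac{t}{1-t}\right)^s$, so the total Hilbert series is
\[
\sum_{s=0}^{i-1} \binom{n}{s} \left(\frac{t}{1-t}\right)^s = \sum_{s=0}^{i-1} \binom{n}{s} \frac{t^s}{(1-t)^s}.
\]
The final step is to put this sum over the common denominator $(1-t)^{i-1}$ and identify the numerator with $\sum_{j=0}^{i-1}\binom{n-i+j}{n-i} t^j$. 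Clearing denominators gives numerator $\sum_{s=0}^{i-1}\binom{n}{s} t^s (1-t)^{i-1-s}$; expanding $(1-t)^{i-1-s}$ and collecting the coefficient of $t^j$ yields an alternating binomial sum which, by the Vandermonde--Chu identity (or a routine induction on $i$), simplifies to $\binom{n-i+j}{n-i}$. This matches the claimed Hilbert series.

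The only genuinely nontrivial point is the combinatorial identity in the last step, i.e.\ verifying that the numerator $\sum_{s=0}^{i-1}\binom{n}{s}t^s(1-t)^{i-1-s}$ has the stated coefficients. I expect this to be the main obstacle, though a mild one: it can be dispatched either by the hockey-stick/Vandermonde identity or, more cleanly, by a direct bijective argument counting monomials of degree $\le i-1$ in $n-i+1$ variables, which is the standard interpretation of $\binom{n-i+j}{n-i}$. Indeed, one can bypass the algebra entirely by noting that $A/T_i$ is Cohen--Macaulay of dimension $i-1$ (as already stated, via the linear resolution of \cite{eagonreiner}), so the numerator has nonnegative coefficients and degree at most $i-1$; it then suffices to match the Hilbert function in the finitely many degrees $0,1,\dots,i-1$, where $A/T_i$ agrees with a generic Artinian reduction and the count reduces to monomials in $i-1$ variables. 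Either route is short, so the proof should be compact.
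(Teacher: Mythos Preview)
Your main argument is correct and takes a genuinely different route from the paper. You count the monomial basis of $A/T_i$ directly by support size, obtaining $\sum_{s=0}^{i-1}\binom{n}{s}\bigl(\tfrac{t}{1-t}\bigr)^s$, and then reduce to the Vandermonde identity; this is entirely elementary and does not use the Cohen--Macaulay or linear-resolution facts at all. The paper instead leans on those structural facts: Cohen--Macaulayness gives the form $H(t)/(1-t)^{i-1}$, the linear resolution bounds $\deg H\le i-1$, and then agreement of the Hilbert functions of $A$ and $A/T_i$ in degrees $\le i-1$ forces $H(t)$ to coincide with the truncation of $1/(1-t)^{n-i+1}$, i.e.\ $h_j=\binom{n-i+j}{n-i}$. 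Your approach buys independence from \cite{eagonreiner}; the paper's approach buys brevity once that input is in hand and avoids any binomial manipulation.

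One small slip in your closing aside: after an Artinian reduction by $i-1$ generic linear forms you are left with $n-(i-1)=n-i+1$ variables, not $i-1$ variables, and it is the degree-$j$ monomials in those $n-i+1$ variables that give $\binom{n-i+j}{n-i}$. This does not affect your main argument.
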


\begin{proof}
Write the Hilbert series of $A/T_i$ as $H(t) / (1-t)^{i-1}$. Since $A/T_i$ is Cohen--Macaulay, the degree of $H$ is the regularity of $A/T_i$ \cite[Corollary 4.8]{syzygies} which is $i-1$ since it has a linear resolution, and $H(1) = \deg(A/T_i) =  \binom{n}{i-1}$. Furthermore, since the Hilbert functions of $A$ and $A/T_i$ agree in degrees up to $i-1$, we conclude that $H(t) = 1 + h_1 t + \cdots + h_{i-1} t^{i-1}$ where $h_j = \binom{n-i+j}{n-i}$ is the dimension of the space of degree $j$ polynomials in $n-(i-1)$ variables.
\end{proof}

\begin{proposition} \label{prop:socledim}
The Hilbert series of $A/J'$ is
\[
(\sum_{j=0}^{i-1} \binom{n-i+j}{n-i} t^j) \cdot \prod_{j=1}^{i-1} \frac{ 1 - t^{jm} }{ 1 - t}. 
\]
The socle of $A/J'$ is concentrated in top degree and has dimension $\binom{n-1}{i-1}$.
\end{proposition}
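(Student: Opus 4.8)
The plan is to recognize $J'$ as a regular-sequence cut of the ideal $T_i$, reducing everything to the structure of $A/T_i$ that has already been recorded. The first step is to discard the redundant generators: for $j \ge i$, every monomial $\prod_{k \in K} x_k^m$ occurring in $e_j(x^m)$ involves $|K| = j \ge i$ distinct variables, so it is divisible by a squarefree monomial of degree $i$ and therefore lies in $T_i$. Hence $e_j(x^m) \in T_i$ for all $j \ge i$, and
\[
J' = T_i + (e_1(x^m), \dots, e_{i-1}(x^m)).
\]

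Next I would show that $e_1(x^m), \dots, e_{i-1}(x^m)$ is a regular sequence on $A/T_i$. Since $A/T_i$ is Cohen--Macaulay of Krull dimension $i-1$, it is enough to check that these $i-1$ homogeneous elements form a homogeneous system of parameters, i.e., that their only common zero on $V(T_i)$ is the origin. A point of $V(T_i)(\ol{K})$ lies on some component $L_S = \{x_k = 0 \mid k \notin S\}$ with $|S| = i-1$; restricted to $L_S$, the functions $e_1(x^m), \dots, e_{i-1}(x^m)$ become the full list of elementary symmetric polynomials in the $i-1$ quantities $x_k^m$ ($k \in S$), whose simultaneous vanishing forces every $x_k^m = 0$ and hence every $x_k = 0$. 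With the regular sequence in hand, the Hilbert-series formula is immediate: cutting the Cohen--Macaulay ring $A/T_i$ by a regular sequence of degrees $m, 2m, \dots, (i-1)m$ multiplies its Hilbert series by $\prod_{j=1}^{i-1}(1 - t^{jm})$, and inserting the known value of the Hilbert series of $A/T_i$ produces exactly the stated product.

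For the socle, I would invoke the fact that $A/T_i$ is a level algebra, since it has a linear resolution. Because $e_1(x^m), \dots, e_{i-1}(x^m)$ is a maximal regular sequence, $A/J'$ is an Artinian reduction of $A/T_i$, and the level property (as recalled in the background on free resolutions) says precisely that the socle of such a reduction is concentrated in top degree. As the top graded piece of any graded Artinian algebra is automatically killed by $\fm = (x_1, \dots, x_n)$, the socle is then exactly the top-degree component of $A/J'$. Its dimension I would extract from the Hilbert series $H_{A/J'}(t) = P(t) Q(t)$ with $P(t) = \sum_{j=0}^{i-1} \binom{n-i+j}{n-i} t^j$ and $Q(t) = \prod_{j=1}^{i-1}(1 + t + \cdots + t^{jm-1})$: the top degree equals $(i-1) + \sum_{j=1}^{i-1}(jm-1) = m\binom{i}{2}$, and its coefficient is the product of the leading coefficients of $P$ and $Q$, namely $\binom{n-1}{i-1} \cdot 1 = \binom{n-1}{i-1}$.

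The one genuine obstacle is the regular-sequence claim; once $e_1(x^m), \dots, e_{i-1}(x^m)$ is known to be a system of parameters on $A/T_i$, the Cohen--Macaulayness upgrades it to a regular sequence at no cost, and both the Hilbert series and the socle then fall out formally. The remaining points --- the containment of the higher $e_j(x^m)$ in $T_i$ and the extraction of the leading coefficient --- are bookkeeping.
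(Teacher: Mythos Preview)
Your proposal is correct and follows essentially the same route as the paper: reduce $J'$ to $T_i + (e_1(x^m),\dots,e_{i-1}(x^m))$, show these elements are a homogeneous system of parameters (hence a regular sequence by Cohen--Macaulayness), deduce the Hilbert series, and read off the socle statement from levelness. The only cosmetic difference is that the paper verifies the system-of-parameters claim by noting that $e_1,\dots,e_n$ is already a hsop on $A$ and that $e_i,\dots,e_n\in T_i$, whereas you check it directly on each linear component $L_S$; both arguments are equally short and lead to the same conclusion.
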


\begin{proof}
Since $e_1, \dots, e_n$ form a homogeneous system of parameters on $A$ and $e_i, e_{i+1}, \dots, e_n \in T_i$, we conclude that $e_1, \dots, e_{i-1}$ is a homogeneous system of parameters for $A/T_i$ since $\dim A/T_i = i-1$. Therefore the same is true for $e_1(x^m), \dots, e_{i-1}(x^m)$. We have already seen that $A/T_i$ is Cohen--Macaulay, so in fact $e_1(x^m), \dots, e_{i-1}(x^m)$ is a regular sequence on $A/T_i$, and the result follows. The statement that the socle of $A/J'$ is concentrated in top degree follows from the fact that $A/T_i$ is a level algebra. The other statements follow from the above discussion.
\end{proof}

\begin{proposition}
The top degree of $A/J'$ is isomorphic to the Specht module $S_{(n-i+1,i-1)}$, which is equivalent to $\bigwedge^{i-1}\fh$ where $\fh$ is the $n-1$ dimensional reflection representation of $\Sigma_n$ and $G(m,m,n)$ acts through the surjection $G(m,m,n) \to \Sigma_n$ that sends a generalized permutation matrix to its underlying permutation.
\end{proposition}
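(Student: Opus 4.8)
The plan is to produce an explicit $G(m,m,n)$-equivariant isomorphism $\bigwedge^{i-1}\fh \xrightarrow{\sim} (A/J')_{m\binom{i}{2}}$. By Proposition~\ref{prop:socledim} the top nonzero degree of $A/J'$ is $m\binom{i}{2}$, it coincides with the socle, and it has dimension $\binom{n-1}{i-1} = \dim \bigwedge^{i-1}\fh$; so once I have an equivariant map between these two spaces it suffices to prove it is injective. Two features of the hypotheses get used repeatedly. Since $p \nmid n$, the permutation module splits as $K^n = \fh \oplus K\mathbf{1}$ with $\mathbf{1} = e_1 + \cdots + e_n$, so that $\bigwedge^i K^n = \bigwedge^i \fh \oplus (\mathbf{1} \wedge \bigwedge^{i-1}\fh)$ and $\mathbf{1} \wedge \bigwedge^{i-1}\fh \cong \bigwedge^{i-1}\fh$. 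And since every polynomial below is a polynomial in the $x_j^m$, the diagonal subgroup $N = \ker(G(m,m,n) \to \Sigma_n)$ acts trivially on it, so it is legitimate to work with $\Sigma_n$-representations and only at the end record that $G(m,m,n)$ acts through $\Sigma_n$.

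First I would build the map. Let $L_r \colon K^n \to A$ be the linear map with $L_r(e_j) = x_j^{mr}$ for $0 \le r \le i-1$, and set $\phi(v_1 \wedge \cdots \wedge v_i) = \det(L_{b-1}(v_a))_{a,b=1}^{i}$. This determinant is alternating and multilinear, so it descends to $\phi \colon \bigwedge^i K^n \to A_{m\binom{i}{2}}$, and on a basis vector it returns, up to sign, the Vandermonde $V_W = \prod_{a<b}(x_{w_a}^m - x_{w_b}^m)$ for $W = \{w_1 < \cdots < w_i\}$. Each $V_W$ is exactly the image under $x_j \mapsto x_j^m$ of the Garnir polynomial of the hook $(n-i+1, 1^{i-1})$ whose height-$i$ column is filled by $W$, so the image of $\phi$ is the substituted hook Specht module $S_{(n-i+1,1^{i-1})}$, which is isomorphic to $\bigwedge^{i-1}\fh$ by the standard exterior-power description of hook Specht modules. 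The first column of the defining matrix is $(L_0(v_1), \dots, L_0(v_i))$ and $L_0$ is the augmentation $\mathbf{1}^*$; hence $\phi$ annihilates $\bigwedge^i \fh$ (a wedge of vectors in $\fh = \ker \mathbf{1}^*$ has a zero first column), while for $v_2, \dots, v_i \in \fh$ expansion along that column gives $\phi(\mathbf{1} \wedge v_2 \wedge \cdots \wedge v_i) = n \cdot M$, where $M$ is the generalized Wronskian minor $\det(L_b(v_a))$ with $2 \le a \le i$ and $1 \le b \le i-1$; note $n$ is a unit since $p \nmid n$. Reducing modulo $J'$ thus yields a $\Sigma_n$-equivariant map $\bar\phi \colon \bigwedge^{i-1}\fh \to (A/J')_{m\binom{i}{2}}$.

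It remains to prove $\bar\phi$ is an isomorphism, and since the two spaces have equal dimension $\binom{n-1}{i-1}$, this is precisely the injectivity of $\bar\phi$ --- equivalently, that no nonzero substituted hook Garnir polynomial lies in $J'$ in degree $m\binom{i}{2}$. I expect this to be the main obstacle: everything before it is formal and the dimension count is already supplied by Proposition~\ref{prop:socledim}, so the entire content of the proposition is that these Garnir polynomials do not collapse modulo $J'$. The cleanest way I would try to close it is representation-theoretic: under $p \nmid n$ the hook $\bigwedge^{i-1}\fh \cong S_{(n-i+1,1^{i-1})}$ is irreducible (James' irreducibility criterion for hooks), so $\bar\phi$ is either zero or injective, and it is enough to exhibit a single $V_W$ that is nonzero modulo $J'$. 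For that I would take $W$ with $\min W \ge 2$, so that the leading monomial $x_{w_1}^{m(i-1)} x_{w_2}^{m(i-2)} \cdots x_{w_{i-1}}^{m}$ of $V_W$ uses only $i-1$ variables, none equal to $x_1$, and argue that it is a standard monomial for $J'$ (it avoids $T_i$ and the leading terms of the $e_k(x^m)$). If one wants to avoid the irreducibility input, the fallback is to compute the initial ideal of $J'$ in top degree and check directly that the $\binom{n-1}{i-1}$ standard hook Garnir polynomials remain linearly independent modulo $J'$.

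Granting the injectivity, $\bar\phi$ is a $G(m,m,n)$-equivariant isomorphism; the subgroup $N$ acts trivially because the $V_W$ are polynomials in the $x_j^m$, so $G(m,m,n)$ acts through $\Sigma_n$, and the top degree is identified with $\bigwedge^{i-1}\fh \cong S_{(n-i+1,1^{i-1})}$, as required.
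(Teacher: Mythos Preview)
Your approach is genuinely different from the paper's and, modulo one real gap, is arguably cleaner. The paper works over $\bQ$: it uses that $m\binom{i}{2}$ is the minimal degree in which the hook Specht module appears in $\Sym(\fh^*)$, observes that the Garnir polynomials survive in $P=A/T_i$, and then argues via characters that since $S_\lambda$ does not occur in $P$ in strictly lower degrees, it cannot lie in the image of $\bigoplus_\alpha e_\alpha(x^m)\cdot P_{m\binom{i}{2}-m\alpha}$, hence survives in $Q=A/J'$. A dimension count then gives equality, and the paper asserts that this descends to $\bZ$ and reduces modulo $p$. Your construction of the explicit $\Sigma_n$-equivariant map via the Vandermonde determinant, together with Peel's theorem that $\bigwedge^{i-1}\fh$ is irreducible when $p\nmid n$, is more direct and avoids the characteristic-$0$ detour. (You are also implicitly correcting a typo: the paper writes the two-row partition $(n-i+1,i-1)$ where the hook $(n-i+1,1^{i-1})$ is meant, as the computation $n(\lambda)=\binom{i}{2}$, the Garnir polynomials displayed, and the identification with $\bigwedge^{i-1}\fh$ all confirm.)

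The gap is exactly where you flagged it: showing some $V_W$ is nonzero in $A/J'$. Your proposed argument---that the leading monomial $x_{w_1}^{m(i-1)}\cdots x_{w_{i-1}}^m$ avoids the squarefree degree-$i$ monomials and (for $w_1\ge 2$) the lex-leading terms $x_1^m\cdots x_k^m$ of the $e_k(x^m)$---only shows it avoids the ideal generated by leading terms of the \emph{given} generators, not the initial ideal of $J'$. The $e_k(x^m)$ together with $T_i$ are not a Gr\"obner basis (S-pairs between $e_1(x^m)$ and squarefree monomials produce new leading terms not involving $x_1$), so ``standard monomial'' has no meaning here without further work. Your fallback of computing $\mathrm{in}(J')$ in top degree is not carried out and is not obviously easy. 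One way to close the gap that stays in your framework: note that $A/J'$ is free over $\bZ$ (the regular-sequence argument in Proposition~\ref{prop:socledim} is characteristic-free), so the question of whether the image of the integral Specht module spans $(A/J')_{m\binom{i}{2}}$ over $\bZ$ can be checked over $\bQ$, where the paper's minimal-degree-of-occurrence argument applies; but you would still need to argue that the resulting $\bZ$-map has unit determinant (equivalently, that the standard Garnir images are a $\bZ$-basis, not merely a $\bQ$-basis), or else fall back on irreducibility over $\bF_p$ once nonvanishing of a single $V_W$ mod $p$ is secured. As written, the proof is incomplete at precisely this point.
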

  
\begin{proof}
We will show that this is true over $\bZ$. To check that it is an exterior power, we can work over $\bQ$, so we assume $K = \bQ$. First suppose that $m=1$. For any partition $\lambda$, the function $n(\lambda) = \sum (j-1)\lambda_j$ denotes the lowest degree of $\bQ[x_1,\ldots,x_n]$ in which $S_{\lambda}$ appears. We see that $n((n-i+1,i-1)) = \binom{i}{2}$. For general $m$, we see that $G(m,m,n)$ only acts via its quotient $\Sigma_n$ on polynomials of degrees divisible by $m$, so the lowest degree that $S_{(n-i+1,i-1)}$ appears in is $m \binom{i}{2}$.

Let $P$ be the quotient of $\bQ[x_1,\ldots,x_n]$ by the squarefree monomials of degree $i$. We see that the $S_{(n-i+1,i-1)}$ in degree $m\binom{i}{2} $ of $\bQ[x_1,\ldots,x_n]$ has as its basis the Garnir polynomials $(x^m_{t_1} - x^m_{t_2})(x^m_{t_1} - x^m_{t_3}) \cdots (x^m_{t_{i-1}} - x^m_{t_i})$ where $(t_1,\ldots,t_i)$ is the first column of a standard filling for $(n-i+1,i-1)$. When multiplying this out, there are terms with less than $i$ indices, so it is not killed by the squarefree degree $i$ monomials. Therefore, $\ch P_{m\binom{i}{2}} = \chi_{(n-i+1,i-1)} + \cdots$. Let $Q$ be the quotient of $P$ by the $e_j(x^m)$. We then see that 
\[
\ch P_{m\binom{i}{2}} = \sum_{\alpha = 1}^{i-1} \langle e_\alpha(x^m) \rangle \otimes P_{m\binom{i}{2} - m\alpha} + \ch Q_{m\binom{i}{2}}.
\]
The image of $\langle e_\alpha(x^m) \rangle$ is equivalent to the trivial representation. We have already discussed that $S_{(n-i+1,i-1)}$ does not appear in $P$ in degrees strictly smaller than $m\binom{i}{2}$, so none of these irreducibles in the sum can be $S_{(n-i+1,i-1)}$. Therefore, we have that $Q_{m\binom{i}{2}}$ contains the Specht module $S_{(n-i+1,i-1)}$. By a dimension count (Proposition~\ref{prop:socledim}) they must be equal. Since this is true over $\bQ$, it is true over $\bZ$ and we can simply reduce modulo $p$. 
\end{proof}

If $p$ does not divide $n$, then $\bigwedge^{i-1} \fh$ is an irreducible representation of the symmetric group, and hence of $G(m,m,n)$. By Lemma~\ref{lem:socle}, to show that $A/J'$ is an irreducible $\bH(G)$-module, it remains to show that $\beta$ is nonzero on the top degree of $A/J'$. But we have been unable to show this.

\begin{example}
For $i=2$ (and $n \equiv 2 \pmod p$), we have $H(t) = 1 + (n-1)t$. So the Hilbert series of $A/J'$ is 
\[
(1+(n-1)t)(1 + t + \cdots + t^{m-1}) = 1 + nt + nt^2 + \cdots + nt^{m-1} + (n-1)t^m . 
\]
The degree $m$ part of $A/J'$ is spanned by $x_1^m, \dots, x_n^m$ modulo $x_1^m + \cdots + x_n^m$. Since $D_1(x_1^s) = -cm(n-1)x_1^{s-1}$ and $n \equiv 2 \pmod p$, we get that $\beta(x_1^m,x_1^m) = (-cm)^m$. Thus $\beta$ is nonzero on the degree $m$ part of $A/J'$, so $A/J'$ is irreducible.
\end{example}

\section{Dihedral groups} \label{sec:dihedral}

In this section, we focus on the groups $G(m,m,2)$, which are the symmetry groups of regular $m$-gons, i.e., dihedral groups of order $2m$. We now use the notation $x=x_1$ and $y=x_2$. Let $\xi$ be a primitive $m$th root of unity. As usual, we assume that $p$ does not divide $m$ and that $p \ne 2$, so $p$ does not divide the order of the group.

When $m$ is even, $G(m,m,2)$ has $\frac{m}{2} + 3$ conjugacy classes. We index the representatives as $r_i$, with $r_0$ being the identity, $r_{-2}$ as $\left( \begin{smallmatrix} 0&1 \\ 1&0\end{smallmatrix} \right)$, $r_{-1}$ as $\left( \begin{smallmatrix} 0&\xi \\ \xi^{-1} & 0 \end{smallmatrix} \right)$, and $r_j$ for $1 \le j \le \frac{m}{2}$ as $\left( \begin{smallmatrix} \xi^j&0 \\ 0 & \xi^{-j} \end{smallmatrix} \right)$. Then $r_{-1}$ and $r_{-2}$ are the conjugacy classes of reflections and we set $c = c_{r_{-1}}$ and $d = c_{r_{-2}}$.

When $m$ is odd, $G(m,m,2)$ has $\frac{m-1}{2} + 2$ conjugacy classes.  We let $\xi$ be an $m$th root of unity, and we index the representatives as $r_i$, with $r_0$ being the identity, $r_{-1}$ as $\left( \begin{smallmatrix} 0&1 \\ 1&0\end{smallmatrix} \right)$, and $r_j$ for $1 \le j \le \frac{m-1}{2}$ as $\left( \begin{smallmatrix} \xi^j&0 \\ 0 & \xi^{-j} \end{smallmatrix} \right)$. Then $r_{-1}$ is the unique conjugacy class of reflections, and we set $c = d = c_{r_{-1}}$ in this case (we use both $c$ and $d$ to avoid writing separate formulas depending on the parity of $m$ below).

\subsection{Representations of dihedral groups}

The representations of $G(m,m,2)$ we describe here are indexed as $\rho_i$ for $-1 \leq i < m/2$, as well as $\rho_{-2}$ and $\rho_{-3}$ when $m$ is even. $\rho_0$ is the trivial representation. For $m$ even, $\rho_{-3}$ is the sign representation, and for $m$ odd, $\rho_{-1}$ is the sign representation. $\rho_{-1}$ and $\rho_{-2}$ are two other $1$-dimensional representations that appear when $m$ is even, but their description will not be relevant.

For $i\ge 1$, $\rho_i$ is the $2$-dimensional representation where roots of unity act by their $i$th power. We refer to the basis vectors as $e_1$ and $e_2$ for the two-dimensional representations. If an element of $G(m,m,2)$ does $x \mapsto \xi^{\ell}y$ and $y \mapsto \xi^{-\ell}x$, then it does $e_1 \mapsto \xi^{i\ell}e_2$ and $e_2 \mapsto \xi^{-i\ell}e_1$.

The four cases $\rho_i$ where $-3 \leq i \leq 0$ have the same behavior, since they are all $1$-dimensional \cite[Remark 3.31]{etingofma}, so we will just explain the case $i=0$.

\begin{proposition}
For $\tau = \rho_0$, the ideal $J$ is generated by $xy$ and $x^m + y^m$ and the Hilbert series of $A/J$ is $(1+t)(1+t+ \cdots + t^{m-1})$.
\end{proposition}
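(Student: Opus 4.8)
The plan is to exhibit the submodule $J' = (xy,\, x^m+y^m)$, identify $A/J'$ with the coinvariant algebra of $G(m,m,2)$, and then run the criterion of Lemma~\ref{lem:socle}. First I would observe that $xy$ and $x^m+y^m$ are exactly the two fundamental invariants of $G(m,m,2)$ (of degrees $2$ and $m$), since each reflection $s_{12}^k$ sends $x\mapsto\xi^k y$, $y\mapsto\xi^{-k}x$ and fixes both. Because $\hbar=0$, multiplication by a $G$-invariant $z$ commutes with every Dunkl operator: unwinding the definition of $D_y$ gives $[D_y,z] = -\sum_{s}c_s\langle y,\alpha_s\rangle\,\tfrac{z-s(z)}{\alpha_s}\,s$, which vanishes termwise when $s(z)=z$. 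In particular $D_y(z)=[D_y,z](1)=0$, so the generators are annihilated by all Dunkl operators, and $J'$—an ideal generated by such invariants—is stable under $\Sym(\fh^*)$, under $G$, and under the $D_y$, hence is a graded $\bH(G)$-submodule. It is proper, so $J'\subseteq J$, and the task reduces to proving equality.

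Next I would pin down $A/J'$. The two invariants cut out only the origin (if $xy=0$ then $x=0$ or $y=0$, and $x^m+y^m=0$ forces the other coordinate to vanish), so they form a regular sequence and $A/J'$ is a graded complete intersection. Equivalently, $A/J'$ is the coinvariant algebra of $G(m,m,2)$; since $p\nmid 2m=|G|$ we are in the non-modular case, where it is the regular representation with Hilbert series $\tfrac{(1-t^2)(1-t^m)}{(1-t)^2}=(1+t)(1+t+\cdots+t^{m-1})$, which is the claimed series. Being a complete intersection, $A/J'$ is Gorenstein, so its socle is one-dimensional and concentrated in top degree $m$, spanned by the image of $x^m=-y^m$. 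This settles conditions (1) and (2) of Lemma~\ref{lem:socle}, a one-dimensional socle being automatically $G$-irreducible.

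For condition (3) I would compute $\beta_c(x^m, y_1^m)$ using $\beta_c(f, y_1 g)=\beta_c(D_1 f, g)$ (property (2) of Proposition~\ref{prop:betaform}), reducing the left entry modulo $J'\subseteq\ker\beta_c$ at each step. Writing $\gamma_l=\sum_{k=0}^{m-1}c_0^{(k)}\xi^{kl}$, one finds $D_1(x^s)\equiv -\gamma_0\,x^{s-1}-\gamma_{s-1}\,y^{s-1}\pmod{J'}$, since $xy\in J'$ kills all intermediate monomials. The off-diagonal contributions $\beta_c(x_2^{s-1},y_1^{s-1})$ vanish by $G$-equivariance of $\beta_c$ under the rotation $r_1\in G$ (which forces $2(s-1)\equiv 0\pmod m$), leaving a clean recursion except in the single case $m$ even, $s-1=m/2$. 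This gives $\beta_c(x^m,y_1^m)=(-mc)^m\neq 0$ when $m$ is odd; when $m$ is even the answer is a polynomial in $c,d$ whose specialization at $d=c$ is again $(-mc)^m\neq 0$, so it is not identically zero and hence nonzero for generic parameters. Lemma~\ref{lem:socle} then yields $J=J'$.

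The hard part will be precisely this nondegeneracy step: for $m$ even the off-diagonal term at $s-1=m/2$ does not vanish (one computes $\gamma_{m/2}=\tfrac{m}{2}(d-c)$), so a direct diagonal recursion fails, and I expect the cleanest way around it is the specialization argument above, which shows nonvanishing of $\beta_c$ on the socle without evaluating the correction term explicitly.
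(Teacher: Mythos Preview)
Your approach is essentially the same as the paper's: both show the generators are $G$-invariant (hence killed by the Dunkl operators), identify the socle of $A/J'$ as the one-dimensional top degree spanned by $x^m$, and verify nondegeneracy of $\beta_c$ there to invoke Lemma~\ref{lem:socle}. The paper simply asserts $D_x(x^s)=-\tfrac{m}{2}(c+d)x^{s-1}$ for all $s\le m$ and concludes $\beta(x^m,x^m)=(-\tfrac{m}{2}(c+d))^m$; your more careful handling of the off-diagonal term at $s=\tfrac{m}{2}+1$ (for $m$ even) via the specialization $d\to c$ is in fact warranted, since that intermediate identity omits a $\gamma_{m/2}\,y^{m/2}$ contribution, though the final nonvanishing conclusion is unaffected.
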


\begin{proof}
Since both of the polynomials listed are invariants of the dihedral group, they are annihilated by the Dunkl operators. Also, these two polynomials form a regular sequence, so the Hilbert series of the quotient is $(1+t)(1 + t + \cdots + t^{m-1})$. In particular, the socle of the quotient ring is concentrated in its top degree $m$ and it is spanned by $x^m$. Also, we have $D_x(x^s) = -\frac{m}{2}(c+d)x^{s-1}$ for all $s \leq m$, so $\beta(x^m,x^m) = (-\frac{m}{2}(c+d))^m \ne 0$. By Lemma~\ref{lem:socle}, we are done.
\end{proof}

\begin{proposition} \label{prop:dihedral-rho1}
For $\tau = \rho_1$, the submodule $J$ is generated by $x \otimes e_1$, $y \otimes e_2$, $x^3 \otimes e_2$, $y^3 \otimes e_1$ and the Hilbert series of $(A \otimes \rho_1) / J$ is $2 + 2t + 2t^2$.
\end{proposition}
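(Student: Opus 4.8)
The plan is to let $J'$ be the $A$-submodule of $A\otimes\rho_1$ generated by the four listed elements, prove that $J'$ is a proper $\bH(G)$-submodule (so that $J'\subseteq J$), and then promote this to the equality $J'=J$ by checking the three hypotheses of Lemma~\ref{lem:socle} for $N:=(A\otimes\rho_1)/J'$. First I would pin down the Hilbert series by a direct monomial count. In degree $d$ the free module $A\otimes\rho_1$ has dimension $2(d+1)$; the linear generators $x\otimes e_1$ and $y\otimes e_2$ remove, in each degree, the $e_1$-components divisible by $x$ and the $e_2$-components divisible by $y$, while the two cubic generators $x^3\otimes e_2,\ y^3\otimes e_1$ clean up degree $3$. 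One checks that $N$ is spanned by $\{1\otimes e_1,1\otimes e_2\}$, $\{x\otimes e_2,y\otimes e_1\}$, $\{x^2\otimes e_2,y^2\otimes e_1\}$ in degrees $0,1,2$ and vanishes in degrees $\ge 3$, giving Hilbert series $2+2t+2t^2$. This part is routine.

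The substantive step is that $J'$ is an $\bH(G)$-submodule. It is an $A$-submodule by construction, and it is $G$-stable because each of the pairs $\{x\otimes e_1,\ y\otimes e_2\}$ and $\{x^3\otimes e_2,\ y^3\otimes e_1\}$ spans a $G$-subrepresentation: the reflections $s^k$ interchange the two vectors of a pair up to a root of unity (e.g. $s^k.(x\otimes e_1)=\xi^{2k}(y\otimes e_2)$), and the rotations act diagonally. Using the commutation relation in \eqref{eqn:rca-rel} to move a Dunkl operator past a monomial, combined with $G$-stability, reduces closure of $J'$ under $D_1,D_2$ to the finitely many checks $D_i g\in J'$ for $g$ one of the four generators. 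The degree-one generators are annihilated outright: $D_1(x\otimes e_1)$ and $D_2(x\otimes e_1)$ are scalar multiples of $1\otimes e_2$ whose coefficients are the character sums $\sum_k c_k\xi^{k}$ and $\sum_k c_k\xi^{2k}$, which one verifies vanish. The cubic generators are not killed but land back in $J'$; for instance $D_1(x^3\otimes e_2)$ is a multiple of $xy\otimes e_1=y\cdot(x\otimes e_1)\in J'$, and $D_2(x^3\otimes e_2)$ is a multiple of $x^2\otimes e_1=x\cdot(x\otimes e_1)\in J'$. Carrying this out for all four generators establishes $D_i(J')\subseteq J'$; since $J'$ is graded and concentrated in positive degree it is proper, whence $J'\subseteq J$.

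Finally I would apply Lemma~\ref{lem:socle} to $N$. The top degree of $N$ is degree $2$, spanned by $x^2\otimes e_2$ and $y^2\otimes e_1$; a short check that multiplication by $x$ or $y$ is nonzero on each basis vector in degrees $0$ and $1$ (e.g. $x\cdot(x\otimes e_2)=x^2\otimes e_2\ne 0$) shows the socle is exactly this top degree, giving hypothesis (1). As a $G$-representation the socle is $\rho_1$ (rotations act by a single power, reflections swap the two vectors), which is irreducible in the non-modular range, giving (2). For (3), since the socle is the irreducible $\rho_1$ and $\beta_c$ is $G$-equivariant, $\beta_c$ is either identically zero or nowhere zero on it, so it suffices to evaluate $\beta_c(x^2\otimes e_2,-)$ on one suitable degree-$2$ element of the dual Verma module; moving the two factors of $x$ across the form using Proposition~\ref{prop:betaform} reduces this to the degree-$0$ pairing and yields a nonzero monomial in the parameters $c,d$, exactly as in the $\rho_0$ computation. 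Lemma~\ref{lem:socle} then gives $J'=J$ and the irreducibility of $N$.

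The main obstacle is the second step. The reduction to generators is clean, but the Dunkl-operator computations for the two cubic generators require expanding $(1-s^k)x^3/(x-\xi^k y)=x^2+\xi^k xy+\xi^{2k}y^2$ and summing over all $m$ reflections, then recognizing the result as a combination of elements already in $J'$; every cancellation rests on the character sums $\sum_k c_k\xi^{jk}$ vanishing for the relevant exponents $j$, so isolating and verifying these vanishings is where the real work lies.
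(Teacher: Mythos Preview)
Your proposal is correct and follows essentially the same route as the paper: verify that the four listed elements generate an $\bH(G)$-submodule by checking the Dunkl operators on the generators (the linear ones are killed, the cubic ones land in $J'$ as multiples of $xy\otimes e_1$ and $x^2\otimes e_1$, etc.), read off the Hilbert series $2+2t+2t^2$, identify the top degree as a copy of $\rho_1$, and finish with Lemma~\ref{lem:socle} after checking $\beta_c$ is nonzero on $x^2\otimes e_2$. The paper simply records the four Dunkl values explicitly (e.g.\ $D_x(x^3\otimes e_2)=-\tfrac{m}{2}(c+d)\,xy\otimes e_1$) rather than framing the reduction via the commutation relation and $G$-stability, but the content is the same.
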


\begin{proof}
Showing that the Dunkl operators annihilate the first two generators is trivial. Also, 
\begin{align*}
D_x(x^3 \otimes e_2) = -\frac{m}{2}(c+d)xy \otimes e_1, \qquad D_y(x^3 \otimes e_2) = \frac{m}{2}(c+d)x^2 \otimes e_1,\\
D_y(y^3 \otimes e_1) = -\frac{m}{2}(c+d)xy \otimes e_2, \qquad D_x(y^3 \otimes e_1) = \frac{m}{2}(c+d)y^2 \otimes e_2,
\end{align*}
hence the submodule generated by the $4$ listed elements is closed under applying Dunkl operators. The Hilbert series of the quotient by this submodule is $2 + 2t + 2t^2$. Its socle is concentrated in top degree and is spanned by $x^2 \otimes e_2$ and $y^2 \otimes e_1$. This is isomorphic to $\rho_1$ as a representation of the dihedral group, sending $x^2 \otimes e_2$ to $e_1$ and $y^2 \otimes e_1$ to $e_2$, so is irreducible. Since $\beta(x^2 \otimes e_2, x^2 \otimes e_2) = -(\frac{m}{2})^2(c+d)^2 \ne 0$, our quotient module is an irreducible $\bH(G)$-module by Lemma~\ref{lem:socle}.
\end{proof}

\begin{proposition}
When $m > 4$ is even and $\tau = \rho_{\frac{m}{2}-1}$, the submodule $J$ is generated by $x \otimes e_1, y \otimes e_2, x^3 \otimes e_2, y^3 \otimes e_1$ and the Hilbert series of $(A \otimes \rho_{\frac{m}{2}-1})/J$ is $2+2t+2t^2$.
\end{proposition}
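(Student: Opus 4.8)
Since the statement is identical to Proposition~\ref{prop:dihedral-rho1} with $\rho_1$ replaced by $\rho_{m/2-1}$, the plan is to deduce it from that proposition by twisting with a linear character, rather than redo the Dunkl operator bookkeeping from scratch. The starting point is the observation that $\rho_{m/2-1}\cong\rho_1\otimes\chi$, where $\chi$ is the one-dimensional representation of $G(m,m,2)$ with $\chi(r_j)=(-1)^j$ on the rotation subgroup (this character exists precisely because $m$ is even) and $\chi(s^0)=1$ on the reflection $\left(\begin{smallmatrix}0&1\\1&0\end{smallmatrix}\right)$. Using $\xi^{m/2}=-1$ one checks directly that the matrices of $\rho_{m/2-1}$, after interchanging the two basis vectors, coincide with those of $\rho_1\otimes\chi$.

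The reason the twist is useful is that a linear character induces an isomorphism of Cherednik algebras. Concretely, the assignment $g\mapsto\chi(g)g$ for $g\in G$, together with the identity on $\fh$ and $\fh^*$, defines an algebra isomorphism $\phi\colon\bH_c(G,\fh)\to\bH_{c'}(G,\fh)$ with parameters rescaled by $c'_s=\chi(s)c_s$; this is immediate from the defining relation \eqref{eqn:rca-rel}, since the only term affected is the reflection sum. Pulling back along $\phi$ identifies $M_c(\rho_1\otimes\chi)$ with $M_{c'}(\rho_1)$ as graded modules (the underlying graded vector space and the $\fh^*$-action are unchanged), and hence identifies their maximal proper submodules. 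For our reflections $s^k=r^ks^0$ we have $\chi(s^k)=(-1)^k$, so the even reflections (parameter $d$) are fixed and the odd ones (parameter $c$) change sign, i.e.\ $(c,d)\mapsto(-c,d)$. Therefore I would simply invoke Proposition~\ref{prop:dihedral-rho1} at the parameters $(-c,d)$: the generators $x\otimes e_1,\,y\otimes e_2,\,x^3\otimes e_2,\,y^3\otimes e_1$ of $J$, the Hilbert series $2+2t+2t^2$, the description of the top-degree socle, and the nonvanishing of $\beta$ all transport under $\phi$ (composed with the basis interchange identifying $\rho_1\otimes\chi$ with $\rho_{m/2-1}$). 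The only numerical change is that the nonzero constant $-\tfrac m2(c+d)$ from the $\rho_1$ computation becomes $\tfrac m2(c-d)$, which is again nonzero for generic parameters, so Lemma~\ref{lem:socle} continues to apply.

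If one prefers to avoid the twist and verify the generators directly, the computation is the same as in Proposition~\ref{prop:dihedral-rho1}, the single new feature being that the eigenvalue $\xi^{(m/2-1)k}$ carries an extra sign $(-1)^k=\xi^{(m/2)k}$. Consequently every coefficient produced by a Dunkl operator becomes a twisted character sum $\sum_{k}c_0^{(k)}(-1)^k\xi^{jk}$ with small $j$; splitting by the parity of $k$ reduces each to a geometric sum in $\xi^2$, which vanishes unless $j\equiv0\pmod{m/2}$. Checking closure then amounts to verifying that the exponents $j$ occurring for the four generators never hit $0\bmod m/2$, which is exactly where the hypothesis $m>4$ is used; for $m=4$ one has $m/2-1=1$ and $\rho_{m/2-1}=\rho_1$, so the statement degenerates to the previous proposition. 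The top-degree part is then two-dimensional and isomorphic to $\rho_{m/2-1}$, hence irreducible, and the diagonal value of $\beta$ is a nonzero multiple of a power of $(c-d)$.

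The main obstacle is bookkeeping the signs: getting the identification $\rho_{m/2-1}\cong\rho_1\otimes\chi$ correct (in particular the value $\chi(s^0)=1$ and the accompanying basis interchange, which is what makes the stated generators come out in the form written), and tracking which twisted character sums survive. The twisting argument is attractive precisely because it packages all of this into the single parameter substitution $(c,d)\mapsto(-c,d)$ and lets Proposition~\ref{prop:dihedral-rho1} do the rest.
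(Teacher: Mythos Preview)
Your twisting argument is correct and is a genuinely different, more conceptual route than the paper's proof. The paper simply asserts that ``the proof that $J$ is closed under applying Dunkl operators is the same as the proof of Proposition~\ref{prop:dihedral-rho1}'' and then computes $\beta(x^2\otimes e_2,x^2\otimes e_2)=-(\tfrac{m}{2})^2(c-d)^2$ directly. Your approach explains \emph{why} the answer is the $\rho_1$ answer with a sign swap in the parameters: the isomorphism $\bH_c\cong\bH_{c'}$ induced by a linear character reduces the whole statement to Proposition~\ref{prop:dihedral-rho1} at $(c',d')=(-c,d)$, and indeed $-\tfrac{m}{2}((-c)+d)=\tfrac{m}{2}(c-d)$, recovering the paper's constant. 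This is a nice structural explanation that the paper does not give.

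One bookkeeping point you should track more carefully. With the paper's explicit basis convention for $\rho_i$ (so that $s^\ell\colon e_1\mapsto\xi^{i\ell}e_2$), the identification $\rho_1\otimes\chi\cong\rho_{m/2-1}$ necessarily swaps $e_1\leftrightarrow e_2$. Transporting the $\rho_1$ generators through $\phi$ and this swap therefore yields
\[
x\otimes e_2,\quad y\otimes e_1,\quad x^3\otimes e_1,\quad y^3\otimes e_2,
\]
not the list in the proposition. A direct check confirms this: for $\tau=\rho_{m/2-1}$ one has
\[
D_y(x\otimes e_1)=\Big(\sum_{k=0}^{m-1}c_{s^k}\,\xi^{(m/2)k}\Big)\,(1\otimes e_2)=\tfrac{m}{2}(d-c)\,(1\otimes e_2)\neq 0,
\]
while $D_x(x\otimes e_2)=D_y(x\otimes e_2)=0$ since the relevant exponents $-i$ and $1-i$ are not divisible by $m/2$ when $m>4$. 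So the correct degree-one generators (in the paper's basis) are $x\otimes e_2$ and $y\otimes e_1$. This does not affect the Hilbert series, the socle, or the nonvanishing of $\beta$; it only relabels the generators, and the proposition as printed appears to have $e_1,e_2$ interchanged. Your remark that ``the accompanying basis interchange \dots\ is what makes the stated generators come out in the form written'' is thus slightly off: the interchange is precisely what swaps them.
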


\begin{proof}
The proof that $J$ is closed under applying Dunkl operators is the same as the proof of Proposition~\ref{prop:dihedral-rho1}. The top degree of the quotient module is spanned by $x^2 \otimes e_2$ and $y^2 \otimes e_1$, which is irreducible as a representation of the dihedral group. One checks that $\beta(x^2 \otimes e_2, x^2 \otimes e_2) = -(\frac{m}{2})^2(c-d)^2 \ne 0$, so our quotient is irreducible as an $\bH(G)$-module by Lemma~\ref{lem:socle}.
\end{proof}

\begin{proposition}
Set $\tau = \rho_i$ where $i > 1$. Assume that either $m$ is odd or that $m$ is even and $i < \frac{m}{2}-1$. Then the submodule $J$ is generated by $x \otimes e_1$, $x \otimes e_2$, $y \otimes e_1$, $y \otimes e_2$. In particular, $(A \otimes \rho_i) / J = \rho_i$.
\end{proposition}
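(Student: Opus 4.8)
The plan is to identify the claimed submodule with the entire positive-degree part of the Verma module and then invoke Lemma~\ref{lem:socle}. Write $M = A \otimes \rho_i$ with $A = K[x,y]$. The four elements $x\otimes e_1,\ x\otimes e_2,\ y\otimes e_1,\ y\otimes e_2$ span the degree-$1$ piece $\fh^*\otimes\rho_i$, which is a $G$-subrepresentation; since multiplication by $A$ is $G$-equivariant, the $A$-submodule $J'$ they generate is $G$-stable and equals $\fm\,(A\otimes\rho_i)$, where $\fm=(x,y)$. Consequently the quotient $M/J'$ is just $\rho_i$ sitting in degree $0$. It then remains to prove (a) that $J'$ is closed under the Dunkl operators, so that it is an $\bH(G)$-submodule, and (b) that $M/J'$ is the irreducible quotient, i.e.\ $J'=J$.

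For (a), note that $D_x$ and $D_y$ lower degree by $1$, so they send the degree-$1$ generators into degree $0$; since $J'$ vanishes in degree $0$, closure is equivalent to the statement that $D_x$ and $D_y$ annihilate all four generators. First I would record the explicit Dunkl operators: summing over the $m$ reflections $\sigma_k$ ($0\le k<m$) given by $x\mapsto\xi^k y$, $y\mapsto\xi^{-k}x$, for which one may take $\alpha_{\sigma_k}=x-\xi^k y$ and $\sigma_k.e_1=\xi^{ik}e_2$, $\sigma_k.e_2=\xi^{-ik}e_1$. A direct evaluation of $D_x(f\otimes v)$ and $D_y(f\otimes v)$ on each generator collapses, using $(1-\sigma_k)x/\alpha_{\sigma_k}=1$ and $(1-\sigma_k)y/\alpha_{\sigma_k}=-\xi^{-k}$, to a single scalar multiple of $e_1$ or $e_2$. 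Writing $\Sigma(j)=\sum_{k=0}^{m-1}c_{\sigma_k}\,\xi^{jk}$, one finds that the eight images are, up to sign, the vectors $\Sigma(j)\,e_\bullet$ with $j$ ranging over $\{\pm(i-1),\ \pm i,\ \pm(i+1)\}$. Thus (a) reduces to showing $\Sigma(j)=0$ for these six values of $j$.

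I would then evaluate these character sums. When $m$ is odd all reflections are conjugate, so $c_{\sigma_k}=c$ and $\Sigma(j)=c\sum_k\xi^{jk}$ vanishes unless $j\equiv 0\pmod m$; the hypotheses $1<i<m/2$ guarantee that none of $\pm(i-1),\pm i,\pm(i+1)$ is $\equiv 0\pmod m$ (here $i>1$ is needed to exclude $i-1\equiv0$). When $m$ is even the reflections split into two classes according to the parity of $k$, and splitting $\Sigma(j)$ into the two resulting geometric progressions shows $\Sigma(j)=0$ unless $2j\equiv0\pmod m$, i.e.\ unless $j\equiv0\pmod{m/2}$; the hypotheses $i>1$ and $i<m/2-1$ are exactly what rule out $j=i-1$, $j=i$, and---crucially---$j=i+1\equiv 0\pmod{m/2}$, the last failing precisely at the excluded boundary value $i=m/2-1$ (which is why that case, treated in the preceding proposition, behaves differently). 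With $\Sigma(j)=0$ established, $J'$ is an $\bH(G)$-submodule. Finally, for (b) I would apply Lemma~\ref{lem:socle} to $N=M/J'=\rho_i$: since $N$ is concentrated in degree $0$, its socle as a $\Sym(\fh^*)$-module is all of $N$, hence concentrated in top degree (condition (1)); $\rho_i$ is an irreducible $G$-representation in the stated range because $\xi^{2i}\ne1$ (condition (2)); and by property (3) of Proposition~\ref{prop:betaform} the form $\beta_c$ restricted to degree $0$ is the canonical nondegenerate pairing $\rho_i\otimes\rho_i^*\to K$, so $\beta_c(v,-)\ne0$ for any nonzero $v$ (condition (3)). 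Hence $J'=J$ and $(A\otimes\rho_i)/J=\rho_i$. The only real obstacle is the bookkeeping of the six exponents $j$ and matching each hypothesis to the residue it excludes; the vanishing of the sums themselves is routine.
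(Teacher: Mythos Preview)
Your argument is correct and follows the same route as the paper, which simply asserts that a direct calculation shows the four degree-$1$ generators are annihilated by the Dunkl operators; you have just written out that calculation explicitly via the character sums $\Sigma(j)$ and checked the hypotheses against the excluded residues. One small remark: invoking Lemma~\ref{lem:socle} is a bit more machinery than necessary here, since $M/J'=\rho_i$ is already an irreducible $G$-module and hence has no nonzero proper $\bH(G)$-submodule, which forces $J'=J$ directly.
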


\begin{proof}
Direct calculation shows that these generators are annihilated by the Dunkl operators.
\end{proof}

\subsection{Free resolutions}

We now consider the minimal free resolutions (over the polynomial ring $A$, not over the Cherednik algebra $\bH(G)$) of the $L_c(\tau)$ for the dihedral group. In all cases, the resolution has length $2$ by \eqref{eqn:AB}, so we can calculate the last term in the resolution just from the presentation of $L_c(\tau)$ (since we know the Hilbert series of $L_c(\tau)$).

\begin{compactitem}[$\bullet$]
\item For $\tau = \rho_i$ with $i \le 0$, the ideal $J$ is generated by a regular sequence of degrees $2$ and $m$, so the free resolution takes the form:
\begin{align*}
0 \leftarrow L_c(\rho_i) \leftarrow \rho_i \otimes A \leftarrow \begin{array}{c} \rho_i \otimes A(-2) \oplus\\ \rho_i \otimes A(-m) \end{array} \leftarrow \rho_i \otimes A(-m-2) \leftarrow 0.
\end{align*}

\item For $\tau = \rho_1$ and $m > 4$, the free resolution takes the form: 
\begin{align*}
0 \leftarrow L_c(\rho_1) \leftarrow \rho_1 \otimes A \leftarrow \begin{array}{c} \rho_2 \otimes A(-1) \oplus\\ \rho_2 \otimes A(-3) \end{array} \leftarrow \rho_1 \otimes A(-4) \leftarrow 0.
\end{align*}

\item For $\tau = \rho_i$ and $1 < i < \frac{m}{2}-1$, the free resolution is: 
\begin{align*}
0 \leftarrow L_c(\rho_i) \leftarrow \rho_i \otimes A \leftarrow \rho_i \otimes \fh^{*} \otimes A(-1) 
\leftarrow \rho_i \otimes \wedge^2 \fh^{*} \otimes A(-2) \leftarrow 0.
\end{align*}

However, $\fh^{*}$ is equivalent to $\rho_1$ and $\wedge^2 \fh^{*}$ is equivalent to the sign representation. We see that $\rho_i \otimes \wedge^2 \fh^{*}$ is the same as $\rho_i$ and that $\rho_i \otimes \fh^{*} \cong \rho_{i-1} \oplus \rho_{i+1}$. Then the free resolution is actually:
\begin{align*}
0 \leftarrow L_c(\rho_i) \leftarrow \rho_i \otimes A \leftarrow (\rho_{i-1} \oplus \rho_{i+1}) \otimes A(-1) 
\leftarrow \rho_i  \otimes A(-2) \leftarrow 0.
\end{align*}

\item In the case where $m$ is odd, $m \neq 3$ and $i = \frac{m-1}{2}$, the free resolution initially appears the same:
\begin{align*}
0 \leftarrow L_c(\rho_i) \leftarrow \rho_i \otimes A \leftarrow \rho_i \otimes \fh^{*} \otimes A(-1) 
\leftarrow \rho_i \otimes \wedge^2 \fh^{*} \otimes A(-2) \leftarrow 0.
\end{align*}

However, in this case $\rho_i \otimes \fh^{*}$ decomposes as $\rho_i \oplus \rho_{i-1}$ instead, so the free resolution is:
\begin{align*}
0 \leftarrow L_c(\rho_i) \leftarrow \rho_i \otimes A \leftarrow (\rho_{i-1} \oplus \rho_{i}) \otimes A(-1) 
\leftarrow \rho_i  \otimes A(-2) \leftarrow 0.
\end{align*}

\item The last general case is when $m>8$ and $\tau = \rho_{\frac{m}{2}-1}$. The free resolution is:
\begin{align*}
0 \leftarrow L_c(\rho_i) \leftarrow \rho_i \otimes A \leftarrow \begin{array}{c} (\rho_{-2} \oplus \rho_{-1}) \otimes A(-1) \oplus\\ \rho_{\frac{m}{2}-4} \otimes A(-3) \end{array} \leftarrow \rho_{\frac{m}{2}-3} \otimes A(-4) \leftarrow 0.
\end{align*}
\end{compactitem}

\subsection{Transition matrices}

Now that we have the free resolutions for all but finitely many exceptional cases (to be handled in \S\ref{sec:dihedral-exceptions}), we can consider the transition matrices from simple objects to Verma modules. We use the variable $t$ to represent grading shifts, with columns labeling simple objects $L_c(\tau)$ and rows labeling Verma modules $M_c(\tau)$. For each transition matrix, let $a_{i,j}$ represent the entry in the $i$th row and the $j$th column. 

For $m>8$ even, we have a $(\frac{m}{2}+3) \times (\frac{m}{2}+3)$ transition matrix with $\tau$ going from $\rho_{-3}$ to $\rho_{\frac{m}{2}-1}$ from left to right and top to bottom. Its nonzero entries are:

\begin{compactitem}[$\bullet$]
\item $a_{i,i} = (1-t^2)(1-t^m)$ for $1 \le i \le 4$

\item $a_{5,5} = 1+t^4$, $a_{5,6} = -t-t^3$

\item $a_{j,j} = 1+t^2$, $a_{j-1,j} = a_{j+1,j} = -t$ for $6 \le j \le \frac{m}{2}+2$

\item $a_{2,\frac{m}{2}+3} = a_{3,\frac{m}{2}+3} = -t$, $a_{\frac{m}{2},\frac{m}{2}+3} = -t^3$, $a_{\frac{m}{2}+1,\frac{m}{2}+3} = t^4$, $a_{\frac{m}{2}+3,\frac{m}{2}+3} = 1$
\end{compactitem}

For $m > 3$ odd, we have a $(\frac{m-1}{2}+2) \times (\frac{m-1}{2}+2)$ transition matrix with $\tau$ going from $\rho_{-1}$ to $\rho_{\frac{m-1}{2}}$ from left to right and top to bottom. Its nonzero entries are:

\begin{compactitem}[$\bullet$]
\item $a_{i,i} = (1-t^2)(1-t^m)$ for $i=1,2$

\item $a_{3,3} = 1+t^4$, $a_{3,4} = -t-t^3$

\item $a_{j,j} = 1+t^2$, $a_{j-1,j} = a_{j+1,j} = -t$ for $4 \le j \le \frac{m-1}{2}+1$

\item $a_{\frac{m-1}{2}+1,\frac{m-1}{2}+2} = -t$, $a_{\frac{m-1}{2}+2,\frac{m-1}{2}+2} = 1-t+t^2$
\end{compactitem}

\subsection{Exceptional cases} \label{sec:dihedral-exceptions}

There are a few exceptional cases left, so we just list the answers in these cases.

\begin{compactitem}[$\bullet$]
\item When $m=3$ and $\tau=\rho_1$, $J$ is generated by $x\otimes e_1$, $y\otimes e_2$, $x^3\otimes e_2$, $y^3\otimes e_1$. The free resolution is: 
\begin{align*}
0 \leftarrow L_c(\rho_1) \leftarrow \rho_1 \otimes A \leftarrow \rho_1 \otimes A(-1) \oplus \rho_1 \otimes A(-3) \leftarrow \rho_1 \otimes A(-4) \leftarrow 0
\end{align*}

\item When $m=4$ and $\tau = \rho_1$, $J$ is generated by $xy \otimes e_1$, $xy \otimes e_2$, $(\frac{c+d}{c-d}x^2 + y^2) \otimes e_1$, $(\frac{c-d}{c+d}x^2 + y^2) \otimes e_2$. All of these are sent to $0$ by the Dunkl operators. The Hilbert series of $L_c(\rho_1)$ is $2 + 4t + 2t^2$. The top degree of $L_c(\rho_1)$ is spanned by $x^2 \otimes e_1$, $x^2 \otimes e_2$, which is irreducible as a representation of the dihedral group, and $\beta(x^2 \otimes e_1, x^2 \otimes e_1) = -4(c-d)^2 \ne 0$. The free resolution is:
\begin{align*}
0 \leftarrow L_c(\rho_1) \leftarrow \rho_1 \otimes A \leftarrow (\rho_1 \oplus \rho_1) \otimes A(-2) \leftarrow \rho_1 \oplus A(-4) \leftarrow 0
\end{align*}

\item The final exceptional case is $\tau=\rho_{\frac{m}{2}-1}$ for $m=6,8$. The free resolution is the same as the general $\tau=\rho_{\frac{m}{2}-1}$ case, with suitable indexing modifications. The free resolutions for the cases $m=6$ and $m=8$, respectively, are:
\begin{align*}
0 \leftarrow L_c(\rho_2) \leftarrow \rho_2 \otimes A \leftarrow \begin{array}{c} (\rho_{-2} \oplus \rho_{-1}) \otimes A(-1) \oplus\\ \rho_1 \otimes A(-3) \end{array} \leftarrow (\rho_0 \oplus \rho_{-3}) \otimes A(-4) \leftarrow 0
\end{align*}
\begin{align*}
0 \leftarrow L_c(\rho_3) \leftarrow \rho_3 \otimes A \leftarrow \begin{array}{c} (\rho_{-2} \oplus \rho_{-1}) \otimes A(-1) \oplus\\ (\rho_0 \oplus \rho_{-3}) \otimes A(-3) \end{array}
\leftarrow \rho_1 \otimes A(-4) \leftarrow 0
\end{align*}
\end{compactitem}

We can now consider the transition matrices of these exceptional cases. The transition matrices not covered above are those for $m=2,3,4,6,8$, and they are all shown below:

$m=2$: Transition matrix is $(1-t^2)^2I_4$.

$m=3$: 

\noindent $\begin{pmatrix}
(1-t^2)(1-t^3)&&\\
&(1-t^2)(1-t^3)&\\
&&(1-t)(1-t^3)\\
\end{pmatrix}$

$m=4$:

\noindent $\begin{pmatrix}
(1-t^2)(1-t^4)&&&&\\
&(1-t^2)(1-t^4)&&&\\
&&(1-t^2)(1-t^4)&&\\
&&&(1-t^2)(1-t^4)&\\
&&&&(1-t^2)^2\\
\end{pmatrix}$

$m=6$:

\noindent $\begin{pmatrix}
(1-t^2)(1-t^6)&&&&&t^4\\
&(1-t^2)(1-t^6)&&&&-t\\
&&(1-t^2)(1-t^6)&&&-t\\
&&&(1-t^2)(1-t^6)&&t^4\\
&&&&1+t^4&-t^3\\
&&&&-t-t^3&1\\
\end{pmatrix}$

$m=8$:

\noindent $\begin{pmatrix}
(1-t^2)(1-t^8)&&&&&&-t^3\\
&(1-t^2)(1-t^8)&&&&&-t\\
&&(1-t^2)(1-t^8)&&&&-t\\
&&&(1-t^2)(1-t^8)&&&-t^3\\
&&&&1+t^4&-t&\\
&&&&-t-t^3&1+t^2&t^4\\
&&&&&-t&1\\
\end{pmatrix}$

\section{The rank $3$ groups $G(m,m,3)$} \label{sec:rank3}

In this section, we give some partial results on the case $G = G(m,m,3)$. We will assume that $p$ does not divide $m$ and $p \ne 3$, so $p$ does not divide the order of the group.

We deal only with the case that $3$ does not divide $m$. The representations of $G(m,m,3)$ are indexed by multi-partitions of size $3$ and length $m$, with the added relation that two multi-partitions that differ by a cyclic shift give the same representation: for example, $([2],[1], \emptyset, \emptyset)$ and $(\emptyset ,[2],[1], \emptyset)$ correspond to the same representation of $G(4,4,3)$. 

$G(m,m,3)$ has two $1$-dimensional representations: the trivial representation and the sign representation. In this case, it has one $2$-dimensional representation, which we refer to as $\gamma_0$. It corresponds to the multi-partition $([2,1],\dots)$. The $3$-dimensional representations are of the form $([2],\dots,[1],\dots)$, which we refer to as $\gamma_i$ where the $[1]$ is in the $i$th place (we number the places starting at $0$). The $6$-dimensional representations are of the form $([1],\dots,[1],\dots,[1],\dots)$, which we refer to as $\gamma_{i,j}$, where the $[1]$s are in the $0$th, $i$th, and $j$th places. We remark that in the case when $3$ divides $m$, one of these $6$-dimensional representations splits into three $2$-dimensional representations.

We have already described the character of $L_c(\tau)$ when $\tau$ is $1$-dimensional in \S\ref{sec:Gmmn-pdividesn} and \S\ref{sec:Gmmn-pnotdividen}. Let $\tau=\gamma_0$; if we consider three vectors $a_1,a_2,a_3$ that are permuted by the symmetric group in the obvious way, then the basis vectors of $\gamma_0$ are $e_1=a_1-a_3$ and $e_2=a_3-a_2$. Roots of unity have no effect on these basis vectors. The generators of $J$ take the form of a ``matrix regular sequence'', i.e., we can find generators and a way to group them into the columns of square matrices so that their determinants form a regular sequence. If $x,y,z$ are the basis vectors of $\fh^*$, we can write the six generators of $J$ as the columns of the $2 \times 2$ matrices:
\begin{align} \label{eqn:matrixreg}
\begin{pmatrix}
xyz&0\\
0&xyz\\
\end{pmatrix} \qquad
\begin{pmatrix}
x^m+y^m+z^m&0\\
0&x^m+y^m+z^m\\
\end{pmatrix} \qquad
\begin{pmatrix}
-x^m&y^m\\
z^m&-x^m\
\end{pmatrix}
\end{align}
The first two matrices are composed of invariants of $G$, so their columns are killed by the Dunkl operators. The columns of the third matrix are also easily shown to be killed. The zero locus of the determinants of the matrices is $x=y=z=0$, so they form a regular sequence. Furthermore, these matrices commute, so we can plug them into a Koszul complex of length $3$ to get a minimal free resolution of the quotient module, which tells us its Hilbert series: $2(1+t+t^2)(\frac{t^m-1}{t-1})^2$. Since $\beta(z^{2m}\otimes e_1,z^{2m}\otimes e_1)=2(mc)^{2m}$ if $m$ is even and $-(mc)^{2m}$ if $m$ is odd, $\beta$ is nonzero on the top degree; this means that $L_c(\tau)$ is irreducible by Lemma~\ref{lem:socle}.

When $\tau = \gamma_i$ for $2 \le i \le m-1$, we use the basis for $\tau$ such that permutations act as normal but roots of unity act by their $i$th power; for example, the matrix $\left(\begin{smallmatrix} 0&1&0\\1&0&0\\0&0&1\end{smallmatrix}\right)$ would permute the first two basis vectors, and the matrix $\left(\begin{smallmatrix} \xi&0&0\\0&\xi^{-1}&0\\0&0&1\end{smallmatrix}\right)$ would multiply the first basis vector by $\xi^{i}$, multiply the second basis vector by $\xi^{-i}$, and leave the third basis vector unchanged. We call the basis vectors $w_1,w_2,w_3$. Let $J' \subset A \otimes \tau$ be generated by
\[
(x,0,0),(0,y,0),(0,0,z), (yz,0,0),(0,xz,0),(0,0,xy), (z^{m-i},0,x^{m-i}),(y^{m-i},x^{m-i},0),(0,z^{m-i},y^{m-i}).
\]
These can all easily be shown to be killed by Dunkl operators. We see that the top degree of $(A \otimes \tau)/J'$, which is in degree $m-i$, is isomorphic to the sign representation times $\gamma_{m-i}$ as a representation of the group. Since $\beta(z^{m-i}\otimes w_1, z^{m-i}\otimes w_1)=(-mc)^{m-i}$, which is nonzero, this means that $(A \otimes \tau)/J'$ is irreducible as a representation of the Cherednik algebra. 

\begin{remark} \label{rmk:matrixkoszul}
The matrix regular sequence \eqref{eqn:matrixreg} is boring because all but one of the matrices is a scalar matrix. However, we have done some experiments with higher rank groups $G(m,m,n)$ (with $n \ge 4$) and we encounter more complicated matrices. The conjectural pattern is that the generators have the structure of a matrix regular sequence with $2$ of the matrices being scalar matrices. Here we give two examples for $G(2,2,4)$ on the representation $([3,1],\emptyset, \emptyset, \emptyset)$ in characteristic $7$:
\[
\begin{pmatrix}
x^2 + y^2 + z^2 + w^2&0&0\\
0&x^2 + y^2 + z^2 + w^2&0\\
 0&0&x^2 + y^2 + z^2 + w^2\\
\end{pmatrix}, \,
\begin{pmatrix}
xyzw & 0& 0\\
0&xyzw&0\\
0& 0& xyzw\\
\end{pmatrix},
\]
\[
\begin{pmatrix}
4x^2 + z^2&5x^2&3x^2 + y^2\\
5x^2 + 4y^2 - z^2&-x^2 - y^2&x^2 + 3y^2\\
2x^2&x^2 + z^2&-x^2 + y^2\\
\end{pmatrix},\,
\begin{pmatrix}
2x^4&x^4&x^4\\
 x^4 + 2x^2y^2 + z^4&x^4 + x^2y^2 + x^2z^2&3x^4 + 5x^2y^2 + y^4\\
 2x^2y^2&3x^4 + 4y^4&x^4 - x^2y^2 + 4y^4\\
\end{pmatrix}
\]
and in characteristic $11$, the scalar matrices stay the same while the other $2$ are replaced with:
\[
\begin{pmatrix}
8x^2 + z^2&9x^2&3x^2 + y^2\\
9x^2 + 8y^2 - z^2&-x^2 - y^2&x^2 + 3y^2\\
2x^2&x^2 + z^2&-x^2 + y^2\\
\end{pmatrix}, \,
\begin{pmatrix}
2x^4&x^4&3x^4\\
 x^4 + 2x^2y^2 + z^4&x^4 + x^2y^2 + x^2z^2&5x^4 + 9x^2y^2 + y^4\\
 2x^2y^2&5x^4 + 6y^4&x^4 - x^2y^2 + 6y^4\\
\end{pmatrix}
\]
We point out one serious deficiency with these presentations: the matrices do not commute and their determinants do not form a regular sequence. However, the resolution of the quotient of the free module of rank $3$ by $12$ generators given by the columns of the matrices in both cases has total Betti numbers $\rank \bF_i = 3 \binom{4}{i}$. Since our definition of matrix regular sequence is highly dependent on the choice of presentation, we might guess that there is a way to rearrange the generators so that we get $4$ commuting matrices whose determinants do form a regular sequence. 

It would be interesting to further investigate this phenomena since one can view it as a module-theoretic generalization of a complete intersection. In particular, even to find a deterministic way to find a nice presentation of the generators that would work on the examples above would be of interest. We note that a definition of matrix regular sequence is given in \cite{matrixkoszul} which guarantees that the corresponding matrix Koszul complex is exact, but it remains to be seen if one can extend the definition to a more general context.
\end{remark}

\footnotesize 
\noindent Massachusetts Institute of Technology, Cambridge, MA, USA, {\tt sheelad@mit.edu}

\noindent University of California, Berkeley, CA, USA, {\tt svs@math.berkeley.edu}

\end{document}